\newcommand{\Cc}{\mc{C}}
\newcommand{\Ga}{\Gamma} 
\newcommand{\ga}{\gamma}
\newcommand{\ld}{\ldots}
\newcommand{\beg}{\begin}
\newcommand{\en}{\end}
\newcommand{\trm}{\textrm}
\newcommand{\bgt}{\begin{itemize}}
\newcommand{\ent}{\end{itemize}}
\newcommand{\ite}{\item}
\newcommand{\op}{\operatorname}
\newcommand{\eqre}{\eqref}
\newcommand{\re}{\ref}
\newcommand{\la}{\label}
\newcommand{\si}{\sigma}
\newcommand{\Lc}{\mc{L}}
\newcommand{\lan}{\langle}
\newcommand{\ran}{\rangle}
\newcommand{\Cov}{\operatorname{Cov}}
\newcommand{\ds}{\displaystyle}
\newcommand{\p}{\mathbb{P}}
\newcommand{\Tr}{\operatorname{Tr}}
\newcommand{\Ninf}{\underset{N\to\infty}{\longrightarrow}}
\newcommand{\E}{\mathbb{E}}
\newcommand{\R}{\mathbb{R}}
\newcommand{\C}{\mathbb{C}}
\newcommand{\z}{\mathbb{Z}}
\newcommand{\ud}{\mathrm{d}}
\newcommand{\pro}{probability }
\newcommand{\f}{\frac}
\newcommand{\ff}{\frac{1}}
\newcommand{\lf}{\left}
\newcommand{\ri}{\right}
\newcommand{\st}{such that }
\newcommand{\lam}{\lambda}
\newcommand{\ti}{\times}
\newcommand{\var}{\mathbb{V}\mathrm{ar}}
\newcommand{\vfi}{\varphi}
\newcommand{\ste}{\, ;\, }
\newcommand{\mc}{\mathcal }
\newcommand{\eps}{\varepsilon}
\newcommand{\A}{\mc{A}}
\newcommand{\bck}{\backslash}
\newcommand{\al}{\alpha}
\newcommand{\tta}{\theta}
\newcommand{\cvLone}{\stackrel{L^1}{{\longrightarrow}}}
\newcommand{\ovl}{\overline}
\newcommand{\bbm}{\begin{bmatrix}}
\newcommand{\ebm}{\end{bmatrix}}
\newcommand{\bes}{\begin{equation*}}
\newcommand{\ees}{\end{equation*}}
\newcommand{\be}{\begin{equation}}
\newcommand{\ee}{\end{equation}}
\newcommand{\beqy}{\begin{eqnarray}}
\newcommand{\eeqy}{\end{eqnarray}}
\newcommand{\beq}{\begin{eqnarray*}}
\newcommand{\eeq}{\end{eqnarray*}}
\newcommand{\one}{\mathbbm{1}}
\newcommand{\lto}{\longrightarrow}
\newcommand{\ie}{\emph{i.e. }}
\newcommand{\bpm}{\begin{pmatrix}}
\newcommand{\epm}{\end{pmatrix}}
\newcommand{\Lvy}{L\'evy }
\newcommand{\bpr}{\beg{proof}}
\newcommand{\epr}{\en{proof}}
\newcommand{\bet}{\beta}
\newcommand{\del}{\delta}
\newcommand{\Del}{\Delta}
\newcommand{\ra}{\rangle}
\newcommand{\ub}{\mathbf{u}}
\newcommand{\bef}{\mathbf{e}}
\newcommand{\ba}{\mathbf{a}}
\newcommand{\ka}{\kappa}
\newcommand{\tY}{\tilde{Y}}
\newtheorem{Th}{Theorem}[section]
\newtheorem{propo}[Th]{Proposition}
\newtheorem{lem}[Th]{Lemma}
\newtheorem{cor}[Th]{Corollary}
\newtheorem{Def}[Th]{Definition}
\long\def\symbolfootnote[#1]#2{\begingroup
\def\thefootnote{\fnsymbol{footnote}}\footnote[#1]{#2}\endgroup}
\newcommand{\etc}{, \dots ,}
\def\esp{\mathbb E}
\def\etc{,\ldots ,}
\def\limN{\underset{N \rightarrow \infty}\longrightarrow}
\def\Nlim{\underset{N \rightarrow \infty}\lim}
\def\eps{\varepsilon}
\def\eq{\begin{eqnarray*}}
\def\qe{\end{eqnarray*}}
\def\eqa{\begin{eqnarray}}
\def\qea{\end{eqnarray}}
\def\mbb{\mathbb}
\def\trm{\textrm}
\def\cL{{\mathcal L}}
\def\bR{{\mathbb R}}
\def\bC{{\mathbb C}}
\def\bE{{\mathbb E}}
\def\bP{{\mathbb P}}
\def\ra{{\rightarrow}}
\title[]{Central limit theorems for linear statistics of heavy tailed random matrices}
\author[Florent Benaych-Georges, Alice Guionnet, Camille Male]{Florent Benaych-Georges, Alice Guionnet, Camille Male}
\thanks{FBG: florent.benaych-georges@parisdescartes.fr, MAP 5, UMR CNRS 8145 - Universit\'e Paris Descartes, 45 rue des Saints-P\`eres 75270 Paris Cedex~6, France.\\
AG: aguionne@ens-lyon.fr, CNRS \& \'Ecole Normale Sup\'eerieure
de Lyon, Unit\'e de math\'ematiques pures et appliqu\'ees, 46 all\'ee
d'Italie, 69364 Lyon Cedex 07, France and MIT, Mathematics department, 77 Massachusetts Av, Cambridge MA 02139-4307, USA.\\
CM: male@math.univ-paris-diderot.fr, Laboratoire de probablit\'es et mod\`eles al\'eatoires, Universit\'e Paris Diderot, 175 rue Chevaleret, 75013, France.\\
Research supported by ANR-08-BLAN-0311-01, Simons Foundation and Fondation Science math\'ematique de Paris.}
\subjclass[2000]{15A52;60F05}
\keywords{Random matrices, heavy tailed random variables, central limit theorem}
\begin{document}
\maketitle

\begin{abstract}We show central limit theorems (CLT) for the linear statistics of symmetric matrices with independent heavy tailed entries, including
entries in the domain of attraction of $\alpha$-stable laws and entries with moments exploding with the dimension, as in the adjacency matrices of  Erd\"os-R\'enyi graphs.
For the second model, we also prove a central limit theorem of the moments of its empirical eigenvalues distribution. The limit  laws are  Gaussian, but unlike  the case of standard Wigner matrices, the normalization is the one of the classical CLT for independent random variables.  \end{abstract}



\section{Introduction and statement of results}

Recall that a Wigner matrix is a symmetric random matrix $A=(a_{i,j})_{i,j=1\etc N}$ such that
\begin{enumerate}
	\item[1.] the sub-diagonal entries of $ A$ are independent and identically distributed (i.i.d.),
	\item[2.] the random variables $\sqrt N a_{i,j}$ are distributed according to a measure $\mu$ that does not depend on $N$ and have all  moments finite.
\end{enumerate}
This model was introduced in 1956 by Wigner \cite{wigner} who proved the convergence of the moments
\begin{equation}\label{wigconv}\lim_{N\rightarrow \infty} \mathbb E\Big[ \frac{1}{N}\Tr(A^p) \Big]=\int x^p\f{\sqrt{4-x^2}}{2\pi} \ud x\end{equation}
when $\mu$ is centered with unit variance. Moments can be easily replaced by bounded continuous  functions  in the above convergence and  
 this convergence   holds almost surely. Assumption 2 can also be weakened to
assume only that the second  moment is  finite. The fluctuations around this limit or around the expectation were first studied by Jonsson \cite{jonsson} in the (slightly different) Wishart model, then by Pastur \emph{et al.} in \cite{KKP96},  Sinai and Soshnikov \cite{sinai} with
$p\ll N^{1/2}$ possibly  going to infinity with $N$.  Since then, a long list of further-reaching results   have been obtained:  the central limit theorem was extended to so-called matrix models where the entries interact via a potential in  \cite{johansson},  
   the set of test functions was extended and the assumptions on the entries of the Wigner matrices weakened in \cite{BaiYaoBernoulli2005,BAI2009EJP,lytova,MShcherbina11},  a more general model of band matrices was considered  in   \cite{greg-ofer} (see also \cite{lytova,baiysilver} for general covariance matrices), unitary matrices where considered in  \cite{johansson88,DiSha,soshni00,DiEv}, and Chatterjee developed a general approach to these questions in \cite{chatterjee}, under the condition that the law $\mu$ can be written as a transport of the Gaussian law. Finally, but this is not really our concern here,  the fluctuations of the trace of words in several random matrices were studied in \cite{cabanal, guionnet, mingo, maurel}.
 It turns out that in these cases
		$$\Tr(A^p)-\mathbb E\big[\Tr(A^p) \big]$$
converges towards a Gaussian variable whose covariance depends on the first  four moments of $\mu$. Moments can also be replaced by regular enough  functions and  Assumption 2 can be weakened to assume  that the fourth moment only
is finite. The latter condition is however necessary as the covariance for the limiting Gaussian depends on it.
The absence of normalization by $\sqrt{N}$ shows that the eigenvalues of $A$ fluctuate very little,
as precisely studied by Erd\"os, Schlein, Yau, Tao, Vu and their co-authors, who analyzed their rigidity in e.g. \cite{ESY2,EYY,Tao-Vu_0906.0510}.

\bigskip

In this article, we extend these results for a variation of the Wigner matrix model where Assumption 2 is removed: some entries of the matrix can be very large, e.g. when $\mu$ does not have any second moment or when it depends on $N$, with moments growing with $N$. Then, 
 Wigner's convergence theorem \eqref{wigconv} does not hold, even when moments are replaced by smooth bounded functions. The analogue of the convergence \eqref{wigconv} was 
 studied when the common law $\mu$ of the entries of $ A$ 
 belongs to the domain of attraction of an $\alpha$-stable law or $\mu$ depends on $N$ and has moments  blowing  up with $N$.
Although technical, the model introduced in Hypothesis \ref{Hyp:Model} below has the advantage of containing these two examples (for $u,v$ some sequences depending implicitly on $N$, $u\ll v$ means that $u/v\lto 0$ as $N\to\infty$).

 \beg{hyp}\la{Hyp:Model}Let, for each $N\ge 1$, $A_N=[a_{ij}]$ be an $N\ti N$ real  symmetric   random matrix whose  sub-diagonal entries are some i.i.d. copies of a random variable $a$ (depending implicitly on $N$) such that:\\
$\bullet$ The random variable $a$ can be decomposed into $a=b+c$ \st  as $N\to\infty$, 
	  \beqy
  		\la{107130bis} &&\p(c\ne 0) \ll N^{-1}\\
		\la{107131bis} &&\var(b)\ll N^{-1/2}
	\eeqy
Moreover,  if the $b_i$'s are independent copies of $b$,
\begin{equation}\label{tyu}
\lim_{K\ra\infty}\lim_{N\ra\infty} \mathbb P\left(\sum_{i=1}^N (b_i-\mathbb E(b_i))^2\ge K\right)=0\,.\end{equation}

  $\bullet$ For any $\eps>0$ independent of $N$, the random variable $a$ can be decomposed into $a=b_\eps+c_\eps$ \st 
	\be\la{limceps}
		\limsup_{N\to\infty} N\, \p(c_\eps\ne 0)\le \eps
	\ee
for all $k\ge 1$, $N\E[(b_\eps-\E b_\eps)^{2k}]$ has a finite limit $C_{\eps,k}$ as $N\to\infty$. 

$\bullet$ For $\phi_N$ the function defined on  the closure $\ovl{\C^-}$ of $\C^- := \{ \lambda \in \C \ste \Im \lambda < 0\}$ by 
	\be\la{2071216h23} 
		\phi_N(\lambda) := \E \big[ \exp( -i \lambda a^2) \big], 
	\ee we have the convergence, uniform on  compact subsets of  $\ovl{\C^-}$, 
  \be\la{2071216h33}  N(\phi_N(\lam)-1)\;\lto\;\Phi(\lam),
\ee
 for a certain function $\Phi$ defined on $\ovl{\C^-}$.\\\\
 \en{hyp}
 Examples of random matrices satisfying Hypothesis \re{Hyp:Model} are defined as follows.

\begin{Def}[Models of symmetric heavy  tailed matrices with i.i.d. sub-diagonal entries]~\label{defA}
\\Let $A=(a_{i,j})_{i,j=1\etc N}$ be a random symmetric matrix with i.i.d. sub-diagonal entries.
\begin{enumerate}	\item[1.]
	We say that $A$ is a {\bf L\'evy matrix} of parameter $\alpha$ in $]0,2[$ when $A=X/a_N $ where   the entries $x_{ij}$ of $X$  have absolute values in the domain of attraction 
	of $\al$-stable distribution, more precisely 
	\be\la{ABP09exponent}\p\left(|x_{ij}|\ge u\right)=\frac{L(u)}{u^\alpha}\ee
	with a slowly varying function $L$, and
	$$a_N=\inf\{u: P\left(|x_{ij}|\ge u\right)\le\frac{1}{N}\}$$
($a_N=\tilde{L}(N)N^{1/\al}$, with $\tilde{L}(\cdot)$ a slowly varying function).\\
	\item[2.]
		 We say that $A$ is a {\bf Wigner matrix with exploding moments} with parameter $(C_k)_{k\geq 1}$ whenever the entries of $A$ are centered, and  for any $k\geq 1$
	\be\la{1971214h}
		N\E\big[ (a_{ij})^{2k}\big]\Ninf C_k,
	\ee  with 
 for a constant $C>0$. We assume that there exists a unique measure $m$  on $\mathbb R^+$ such that for all $k\ge 0$, $C_{k+1}=\int x^k dm(x)$. 
	\end{enumerate}
\end{Def}

 \beg{lem}\la{lemma10713}Both \Lvy matrices and Wigner matrices with exploding moments satisfy Hypothesis \re{Hyp:Model}. For \Lvy matrices, the function $\Phi$ is given by formula  
 \be\la{exampleintroHTAD}\Phi(\lam)=-\sigma (i\lambda)^{\alpha/2}\ee
for some constant $\sigma\ge 0$ (in this text, as specified  in the notations paragraph at the end of this section, the power functions have a cut on $\R^-$), whereas for Wigner matrices with exploding moments, the function $\Phi$ is given by  \be\la{exampleWMWEMHTAD}\Phi(\lam)=\int \underbrace{\f{e^{-i\lam x}-1}{x}}_{:=-i\lam\trm{ for $x=0$}}\ud m(x),\ee for $m$ the unique measure on $\R_+$ with moments $\int x^k\ud m(x)=C_{k+1}$, $k\ge 0$.
\en{lem}

The proof of this lemma, and of Lemmas \ref{lemma10713LGI} and  \re{lemma10713LGI2}, which show that our hypotheses hold for both  \Lvy matrices and Wigner matrices, are given in Section \ref{part:Hyp}. 

{One can   easily see that our results also apply to complex Hermitian matrices: in this case, one only needs to require Hypothesis \re{Hyp:Model} to be satisfied by the absolute value of non diagonal entries and to have $a_{11}$ going to zero  as $N\to\infty$.}

  A L\'evy matrix whose entries are truncated in an appropriate way is a Wigner matrix with exploding moments \cite{BAGheavytails, MAL122, ZAK}. The   recentered version\footnote{The recentering has in fact asymptotically  no effect on  the spectral measure $A$ as it is a rank  one   perturbation.} of the adjacency matrix of an Erd\"os-R\'enyi graph, \ie of a matrix $A$ \st   \be\la{exampleintroErdosReniymatrixAD}\trm{$A_{ij}=1$ with probability $p/N$ and $0$ with probability $1-p/N$},\ee is also an exploding moments Wigner matrix, with $\Phi(\lambda)=p(e^{-i\lambda}-1)$ (the measure $m$ of Lemma \re{lemma10713} is $p\del_1$). In this case the fluctuations were already studied in \cite{tirozzi}. The method of \cite{tirozzi} can be adapted to study the fluctuations of linear statistics of Wigner matrices with exploding moments. Nevertheless, since we actually use Wigner matrices with exploding moments to study of the fluctuations of L\'evy matrices, it is worthwhile to study these ensembles together.
	
	 The weak convergence of the empirical eigenvalues distribution of a L\'evy matrix has been established in \cite{BAGheavytails} (see also \cite{CB, BDG, BCC}) where it was shown that for any  bounded  continuous  function $f$,
 $$\lim_{N\to\infty}\frac{1}{N}\Tr(f(A))=\int f(x)\ud\mu_\alpha(x)\qquad a.s.$$
 where $\mu_\alpha$ is a heavy tailed probability measure which   depends only  on $\alpha$. Moreover, $\mu_\alpha$ converges towards the semicircle law as $\alpha$ goes to $2$.

  The convergence in moments, in expectation and in probability, of the empirical eigenvalues distribution of a  Wigner matrix with exploding moments has been established by Zakharevich in \cite{ZAK}. In that case, moments are well defined and for any continuous bounded   function $f$,
 $$\lim_{N\to\infty}\frac{1}{N}\Tr(f(A))=\int f(x)\ud\mu_{\mathbf{C}} (x)\qquad a.s.$$
 where $\mu_{\mathbf{C}}$ is a   probability measure which   depends only  on the sequence  $\mathbf{C}:=(C_k)_{k\geq 1}$.

We shall first state the fluctuations of moments of a Wigner matrix with exploding moments around their limit, namely prove the following theorem. 

\beg{Th}[The CLT for moments of Wigner matrices with exploding moments]\label{TCLHTRM2}~ 
\\Let $A=(a_{i,j})_{i,j=1\etc N}$ be a Wigner matrix with exploding moments with  parameter $(C_k)_{k\geq 1}$. Then the process
	\be
			\bigg( \frac 1 {\sqrt N} \Tr A^K -  \esp\Big[ \frac 1 {\sqrt N} \Tr A^K \Big] \bigg)_{K\geq 1}
	\ee
converges in distribution to a centered Gaussian process.
\end{Th}
   
This theorem has been established for the slightly more restrictive model of adjacency matrices of weighted Erd\"os-R\'enyi graphs in \cite{MF91,KSV04,V04}. Our proof is based on the moment method, the covariance of the process is of combinatorial nature, given in Section \ref{Sec:CLTmoments}, Formula \eqref{eq:ZKtoZPI} and Theorem \ref{Th:CLTtraffic}. 

Note that the speed of the central limit theorem is  $N^{-1/2}$ as for independent  integrable random variables,  but  differently from what happens for   standard Wigner's matrices. This phenomenon has also already been observed for adjacency matrix of random graphs \cite{Gerard_Kim,DumiJPP} and  we will see below  that it also  holds  for L\'evy matrices. It  suggests that the repulsive interactions exhibited by the eigenvalues of most models of random matrices with lighter tails than heavy tailed matrices no longer work here. 

For L\'evy matrices, moments do not make sense 
and one should consider smooth bounded test functions. 
We start as is common in random matrix theory, with the study of 
the normalized trace of the resolvant of $A$, given for 
 $z\in\C\bck  \R$ by  
 	$$G(z):=\ff{z-A}\,.$$
	By the previous results, for both L\'evy  and exploding moments Wigner matrices, $N^{-1}\Tr G(z)$ 
converges in \pro to a deterministic limit as the  parameter $N$ tends to infinity. We study the associated fluctuations.

 In fact, even in the case of Wigner matrices with exploding moments, the CLT for moments does not imply a priori the CLT for Stieltjes functions
even though concentration inequalities hold on the right scale, see \cite{BCC2}. Indeed,  one cannot approximate  smooth bounded functions by polynomials  for the total variation norm unless
one can restrict oneself to compact subsets, a point which is not clear in this heavy tail setting.  However, with additional arguments based on martingale technology, we shall prove the following result, valid for both L\'evy matrices and Wigner matrices 
with exploding moments.

\beg{Th}[CLT for Stieltjes transforms]\la{TCLHTRM}~
Under Hypothesis \re{Hyp:Model},  where we assume additionally that for $\eps>0$, there exists 
$C_\eps>0$ such that, for any $k\geq 1$, one has
	\be\la{CondMom}
		\lim_{N\ra\infty} N\mathbb E[(b_\varepsilon-\mathbb E b_\varepsilon)^2]=C_{\eps,k}\leq C_\eps^k
	\ee
then, the process 
		\be\la{217121}
			\bigg( \ff{\sqrt{N}}\Tr G(z)-\ff{\sqrt{N}}\E\big[\Tr G(z)\big] \bigg)_{z\in \C\bck\R}
		\ee
converges in distribution (in the sense of finite marginals) to a centered complex Gaussian process $Z(z)$, $z\in \C\bck\R$, whose covariance $\Cov \big( Z(z), Z(z') \big) := \E\big[Z(z)Z(z')\big]=C(z,z') $ is given in Formulas \eqref{eq:LIntro1} \eqref{eq:LIntro2} and \eqref{eq:RhodanIntro2} below.
 \end{Th}

 Note that the uniform convergence on compact subsets, in \eqre{2071216h33}, implies that $\Phi$ is   analytic   on $\mbb C^-$ and continuous on $\ovl{\C^-}$. 
 As a consequence, we can extend the central limit theorem to a larger class of  functions, namely the Sobolev space $W^{\infty,1}_0(\R)$  of absolutely continuous functions with null limits at $\pm\infty$ and with finite total variation.
    It is easy to see  that  $W^{\infty,1}_0(\R)$ is the set of   functions $f:\R\to\R$ \st there is $g\in L^1(\R)$ \st $$\int g(t)\ud t=0\qquad\trm{ and }\qquad \forall x\in \R,\; f(x)=\int_{-\infty}^x g(t)\ud t.$$ We endow $W^{\infty,1}_0(\R)$ with the total variation norm defined at \eqre{1611131} (see also \eqre{1611132}) in Section \re{sec:concentration} of the appendix.
 It is easy to see that when restricted to $W^{\infty,1}_0(\R)$, this norm is equivalent to the one usually used on the (larger) space $W^{\infty, 1}(\R)$ (see  \cite[Sec. 8.2]{brezis}) and that the set $\Cc^2_c(\R)$ of $\Cc^2$ real valued functions with compact support on $\R$ is dense in  $W^{\infty,1}_0(\R)$.
 \begin{cor}\label{genf}
 Under the hypotheses of Theorem \ref{TCLHTRM}, the process   \begin{equation}
			\Big( Z_N(f):=\ff{\sqrt{N}}\Tr f(A)-\ff{\sqrt{N}}\E\big[\Tr f(A)\big] \ste f\in W^{\infty,1}_0(\R)\Big)
		\end{equation}
 converges in law towards a centered Gaussian process $(Z(f)\ste f\in  W^{\infty,1}_0(\R))$ with covariance given by the unique continuous extension to $W^{\infty,1}_0(\R)\ti W^{\infty,1}_0(\R)$ of the functional defined on $\Cc^2_c(\R)\ti \Cc^2_c(\R)$ by 
 	$$ C(f,g)=\mathbb E\left[  \Re\left(\int_{-\infty}^\infty \ud x \int_0^{+\infty} \ud y  \bar\partial \Psi_f(x,y) Z(x+iy) \right)
	 \Re\left(\int_{-\infty}^\infty \ud x \int_0^{+\infty} \ud y  \bar\partial \Psi_g(x,y) Z(x+iy) \right)\right]$$
where $Z$ is a centered gaussian process with covariance  $C(z,z')$ given in Theorem \ref{TCLHTRM}
and the function $\Psi_f$ is given by
$$\Psi_f(x,y)=f(x)\phi(y) +if'(x)\phi(y) y,\qquad \bar\partial\Psi(x,y)=\pi^{-1}(\partial_x+i\partial_y)\Psi(x,y)$$
 where $\phi$ is a smooth compactly supported function equal to one in the neighborhood of the of the origin.
  \end{cor}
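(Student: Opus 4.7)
The strategy is a Helffer-Sjöstrand reduction of the test function $f$ to the resolvent, combined with Theorem \ref{TCLHTRM} and a uniform variance bound permitting to interchange the limit with the integral.

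First, for $f\in\Cc^2_c(\R)$, one writes the Helffer--Sjöstrand representation
\[
	f(\lam)\;=\;\Re\int_{-\infty}^{\infty}\int_0^{\infty}\bar\pa\Psi_f(x,y)\,\f{1}{x+iy-\lam}\,\ud x\,\ud y,
\]
with $\Psi_f(x,y)=f(x)\phi(y)+if'(x)\phi(y)y$ the almost-analytic extension appearing in the statement; applying it to the eigenvalues of $A$ yields
\[
	Z_N(f)\;=\;\Re\int_{-\infty}^{\infty}\!\!\int_0^{\infty}\bar\pa\Psi_f(x,y)\,\Big(\ff{\sqrt N}\Tr G(x+iy)-\ff{\sqrt N}\E\Tr G(x+iy)\Big)\ud x\,\ud y.
\]
Since $\phi$ is compactly supported and $\bar\pa\Psi_f(x,y)=\pi^{-1}(if'(x)\phi'(y)-f'(x)\phi'(y)y-f''(x)\phi(y)y+\ld)$ involves $y$ to make the singularity at $y=0$ integrable, the key observation is $|\bar\pa\Psi_f(x,y)|=O(y\,|f''(x)|+|\phi'(y)|\,(|f(x)|+|f'(x)|))$, so that $\bar\pa\Psi_f$ is absolutely integrable with a bound controlled by the total variation norm on $W^{\infty,1}_0(\R)$.

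Second, I would show convergence of the integral by combining Theorem \ref{TCLHTRM} (finite-dimensional convergence of $Z_N(z):=N^{-1/2}(\Tr G(z)-\E\Tr G(z))$ to the Gaussian process $Z(z)$) with a uniform second-moment bound. The latter is the technical heart: one must prove that there exists a polynomial $P$ such that $\E|Z_N(x+iy)|^2\le P(|y|^{-1})$ uniformly in $N$ and $x\in\R$, with a tail in $x$ that decays fast enough once multiplied by $\bar\pa\Psi_f$. Such a bound would be obtained via the martingale/concentration estimates for $\Tr G(z)$ alluded to in the paper (see the reference to \cite{BCC2}), writing $\Tr G(z)-\E \Tr G(z)$ as a sum of martingale increments $(\E_k-\E_{k-1})\Tr G(z)$ obtained from the filtration generated by the rows of $A$, and bounding each increment by $C/|\Im z|$ using the resolvent identity. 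This produces $\E|Z_N(z)|^2\le C/(\Im z)^2$, which is the standard estimate and is sufficient (after a crude truncation of the $y$-integral at a scale $y\ge N^{-\kappa}$ for $\kappa$ small enough) to invoke dominated convergence: the finite-dimensional CLT upgrades to convergence of the full stochastic integral against the deterministic integrable kernel $\bar\pa\Psi_f$.

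Third, once the result holds for $f\in\Cc^2_c(\R)$, I extend it to $W^{\infty,1}_0(\R)$ by density. The bilinear form $C(f,g)$ defined in the statement is continuous for the total variation norm (because $\bar\pa\Psi_f$ depends linearly and continuously on $f'$ and $f''$, hence on the TV norm via integration by parts in $x$), and $\Cc^2_c(\R)$ is dense in $W^{\infty,1}_0(\R)$ for that norm as recalled in the excerpt. The corresponding uniform control $\E|Z_N(f)|^2\le C\|f\|_{\text{TV}}^2$ (obtained by integrating the variance bound above against $\bar\pa\Psi_f$) gives tightness and allows one to pass to the limit along a $\Cc^2_c$-approximating sequence for any $f\in W^{\infty,1}_0(\R)$.

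The main obstacle is the uniform variance estimate near the real axis. A naive $|\Im z|^{-2}$ control only permits test functions slightly smoother than those in $W^{\infty,1}_0(\R)$; the delicate point is to balance the $y$-truncation against the decay of $|\bar\pa\Psi_f|$, using that $\bar\pa\Psi_f(x,y)$ carries an extra factor of $y$ near $y=0$ because $\phi\equiv1$ there, so that $|\bar\pa\Psi_f(x,y)|\le C y|f''(x)|$ for small $y$. This extra factor cancels one power of $|\Im z|^{-1}$ in the variance bound and is precisely what makes the extension to $W^{\infty,1}_0(\R)$ work.
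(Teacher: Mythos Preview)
Your overall strategy --- Helffer--Sj\"ostrand reduction for $f\in\Cc^2_c(\R)$, then density in $W^{\infty,1}_0(\R)$ --- matches the paper's. But Step~2 contains a real gap.

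\textbf{The gap.} You write that ``the finite-dimensional CLT upgrades to convergence of the full stochastic integral'' by ``dominated convergence''. This is not valid: Theorem~\ref{TCLHTRM} gives only convergence \emph{in distribution} of finite collections $(Z_N(z_1),\ldots,Z_N(z_p))$, and convergence in distribution cannot be pushed through an integral by dominated convergence (which needs almost-sure or $L^1$ convergence of the integrand). Your $N^{-\kappa}$ truncation does not fix this: after truncating, the integral over $y\in[N^{-\kappa},M]$ is still infinite-dimensional. The paper resolves this by \emph{discretizing} the integral: with $\Delta(x)=\lceil 2^k x\rceil 2^{-k}$, the quantity $Z_N^\Delta(f):=\Re\int\bar\pa\Psi_f(x,y)\,Z_N(\Delta(x)+i\Delta(y))\,\ud x\,\ud y$ is a finite linear combination of $Z_N(z_j)$'s, so Theorem~\ref{TCLHTRM} applies directly. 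The discretization error is controlled in $L^1$, uniformly in $N$, via the H\"older-type bound
\[
\E\big|Z_N(z)-Z_N(z')\big|\;\le\;C\,\Big\|\tfrac{1}{\cdot-z}-\tfrac{1}{\cdot-z'}\Big\|_{\textsc{TV}}\;\le\;\f{C}{y}\Big(\f{|z-z'|}{y}\Big)^{\eps},
\]
the first inequality coming from Lemma~\ref{le:concspec}. Integrating against $|\bar\pa\Psi_f|\le C y$ gives $\E|Z_N^\Delta(f)-Z_N(f)|\le C_\eps 2^{-\eps k}\to0$.

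\textbf{A secondary issue.} In Step~3 you claim $\E|Z_N(f)|^2\le C\|f\|_{\textsc{TV}}^2$ follows from integrating $\E|Z_N(z)|^2\le C(\Im z)^{-2}$ against $|\bar\pa\Psi_f|^2$. This would produce a bound in $\|f''\|_{L^1}$, not $\|f'\|_{L^1}=\|f\|_{\textsc{TV}}$. The TV bound is \eqref{1511131}, i.e.\ Lemma~\ref{le:concspec} applied directly to $f$, and this is what drives the density extension via Lemma~\ref{lem151113}.
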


The function $C(z,z')$ in Theorem \ref{TCLHTRM} is given by $C(z,z')= L(z,z') -L(z)L(z')$, where 
	\eqa\label{eq:LIntro1}
		L(z) & = & \int_0^{\op{sgn}\Im(z) \infty}  \frac 1 {t} \partial_{z} e^{itz+\rho_{z}(t)} \trm dt ,\\
		L(z,z') & = & \int_{0}^1 \,  \int_0^{\op{sgn}\Im(z) \infty} \int_0^{\op{sgn}\Im(z) \infty}  \frac 1 {tt'} \partial^2_{z,z'} e^{itz+it'z'+\rho_{z,z'}^u(t,t')} \trm dt \trm dt' \, \trm du.\label{eq:LIntro2}
	\qea
The maps $\rho_z $ and $\rho_{z,z'}^u$ are given by $\rho_{z,z'}^u(t,t') = \esp_{z,z'}^u\big[ \Phi(ta+t'a') \big]$, $\rho_{z}(t)=\rho_{z,z'}^u(t,0)$, where $(a,a')$ is distributed according to the non random weak limit as $N\rightarrow \infty$, $k/N \rightarrow u$ of 
	\beqy 
			  \la{eq:RhodanIntro2}
		  \frac 1 {N-1} \sum_{j=1}^{N-1} \delta_{ G_k(z)_{jj}, G'_k(z')_{jj}},
	\eeqy
where $G_k(z) = (z-A_k)^{-1}$ for $A_k$ the matrix obtained from $A$ by deleting the $k$-th row and column, and $G'_k(z') = (z'-A_k')^{-1}$ for $A_k'$ a copy of $A_k$ where the entries $(i,j)$ for $i$ or $j\geq k$ are independent of $A_k$ and the others are those of $A_k$. Also, we assume $t\Im z\ge 0$ and $t'\Im z'\ge0$ in 
\eqre{eq:RhodanIntro2}. 
\\
\par The existence of the limit 
\eqref{eq:RhodanIntro2} is a consequence of a generalized convergence in moments, namely the convergence in distribution of traffics, of $(A_k,A'_k)$ stated in \cite{MAL122}, see Lemma \ref{Lem:CVparlesTraffics}. However, under stronger assumptions, an independent proof of this convergence and an intrinsic characterization of 
$\rho_{z,z'}^u$ are provided in Theorem
\ref{Th:TixPtEqRhozz'} below.

Let us first mention that the map $\rho_z$ is the almost sure point-wise limit 
	\eqa \label{eq:RhodanIntro}
		\rho_z(t) & = & \Nlim \frac 1 {N-1} \sum_{j=1}^{N-1} \Phi\big( t G_k(z)_{jj} \big),
	\qea
 and it characterizes the limiting eigenvalues distribution of $A$. Indeed, under the assumptions of Theorem \ref{TCLHTRM}, for all $z\in \C^+$, we have the almost sure convergence 
\begin{equation}\label{Gf}\lim_{N\rightarrow \infty} \frac{1}{N} \Tr G_N(z)=  i \int_0^\infty e^{i t z + \rho_z(t)} \ud t.\end{equation}
This fact was known in the L\'evy case \cite{BAGheavytails} and is proved in greater generality in Corollary \ref{corconv1}. Let us first give a characterization of $\rho_z$.

\beg{hyp}\la{Hyp:ModelLGI}The function $\Phi$ of \eqre{2071216h33}  admits the decomposition \be\la{hypcalcconv}
		\Phi(z)=\int_0^\infty g(y) e^{i\frac{y}{z}} \ud y
	\ee
	where $g(y)$ is a function  \st for some constants $K, \ga>-1, \ka\ge 0$, we have 
	\be\la{ConditionOng}
		|g(y)| \le K \one_{y\le 1} y^\ga +K \one_{y\ge 1} y^{\ka}, \qquad \forall y>0.
	\ee
\en{hyp}

The following lemma insures us that our two main examples satisfy Hypothesis \re{Hyp:ModelLGI} (note that it is also the case when the function $\Phi(x^{-1})$ is in $L^1$ and has its Fourier transform supported by $\R_+$). 
\beg{lem}\la{lemma10713LGI}Both \Lvy matrices and Wigner matrices with exploding moments satisfy Hypothesis \re{Hyp:ModelLGI}. For \Lvy matrices, the function $g$ is  $g(y)=C_\al y^{\f{\al}{2}-1}$,  with $C_\alpha=-\sigma i^{\alpha/2}$,   whereas for Wigner matrices with exploding moments, the function $g$ is   \be\la{exampleWMWEMHTADLGI}g(y)=-\int_{\R^+} \underbrace{\f{J_1(2\sqrt{xy})}{\sqrt{xy}}}_{:=1\trm{ for }xy=0}\ud m(x),\ee for $m$ the measure on $\R_+$ of Lemma \re{lemma10713} 
and $J_1$ the Bessel function of the first kind  defined by $\ds J_1 (s) = \frac s 2 \sum_{k \geq 0} \frac { (-s^2 / 4) ^k } {k ! (k+1)!}$. 
\en{lem}
 
 \beg{Th}[A fixed point equation for $\rho_z(t)$]\la{Th:TixPtEqRhoz}
Under Hypotheses \re{Hyp:Model} and \re{Hyp:ModelLGI}, the function $(z\in \C^+, \lam\in\R^+)\mapsto  \rho_z(\lam)$ is analytic in its first  argument and continuous in the second, with non positive real part, and characterized among the set of such functions by the formula 
	\be\la{charrhoAD}
		\rho_z(\lam)=\lam\int_0^\infty g(\lam y) e^{i{y}z
 +\rho_z({y})}\ud y.
	\ee
\end{Th}
Above, the second point in  Hypotheses \re{Hyp:Model}  is not required anymore, as it served mainly to prove convergence, which is now insured by the uniqueness of limit points. 

Note   that the asymptotics of Wigner matrices with bounded moments is also described by \eqref{charrhoAD}. In this case $\Phi(\lambda) = -i \lambda$, so $g(y)=-i$ and $\rho_z(t) = -it \Nlim \frac 1 N \Tr G(z)$, which leads, by formula \eqref{Gf},
 to the classical quadratic equation
		\be\la{3113}s(z):=  \Nlim \frac 1 N \Tr G(z)  = \frac 1 {z - s(z)}.\ee

Let us now give a fixed point characterization for the function $\rho_{z,z'}?(t,t')$ of \eqre{eq:RhodanIntro2}.

\beg{hyp}\la{Hyp:ModelLGI2}The function $\Phi$ of \eqre{2071216h33}  either has the form \be\la{uniquenessassumption2}\Phi(x)=-\sigma (ix)^{\alpha/2} \ee or admits the decomposition, for $x,y$ non zero: \be\label{phias}
\Phi(x+y)=\iint_{(\R_+)^2}e^{i\frac{v}{x}+i\frac{v'}{y}} \ud\tau(v,v')+ \int e^{i\frac{v}{x}} \ud\mu(v) + \int e^{i\frac{v'}{y}} \ud\mu(v') \end{equation}	for some  complex measures $\tau,\mu$ on respectively $(\R^+)^2$  and $\R^+$ \st for all $b>0$, $\int e^{-bv}\ud|\mu|(v)$ is finite and for some constants $K>0$, $-1< \gamma \leq 0$ and $\ka \geq 0$, and		
	\eqa\label{eq:AssumpTau}
		 \frac{\ud|\tau|(v,v')}{\ud v \ud v'}\le K \big( v^\ga \one_{v\in ]0,1]} + v^{{\ka}} \one_{v\in ]1,\infty[}\big) \big( {v'}^{\ga} \one_{v'\in ]0,1]} + {v'}^{{\ka}} \one_{v'\in ]1,\infty[}\big).
	\qea\en{hyp}
	
	\beg{rmk}\la{22101318h17}{\rm For L\'evy matrices, where $\Phi(x)= C_\al x^{\alpha/2}$ with $C_\alpha=-\sigma i^{\alpha/2}$,
\eqref{phias} holds as well. Indeed,  for all $x,y\in\mathbb C^+$ (with a constant $C_\al$ that can change at every line),
\begin{eqnarray}\la{223131}
&&\Phi(x^{-1}+y^{-1})=C_\al (\frac{1}{x}+\frac{1}{y})^{\alpha/2}= C_\al \frac{1}{x^{\alpha/2}} \frac{1}{y^{\alpha/2}} (x+y)^{\alpha/2}\\
\nonumber &=&
 C_\al\int_0^\infty \ud w \int_0^\infty \ud w'\int_0^\infty \ud v
w^{\alpha/2-1} (w')^{\alpha/2-1} v^{-\alpha/2-1} e^{ i w x+iw'y}(e^{iv(x+y)}-1)
\end{eqnarray}
(where we used the formula $\ds z^{\al/2} =C_\al\int_{t=0}^{+\infty}\f{e^{itz}-1}{t^{\al/2+1}}\ud t$ for any $z\in \C^+$ and $\al\in (0,2)$, which can be proved 
with the residues formula) so that \eqre{phias}   holds with $\mu= 0$ and $\tau(v,v')$ with density with respect to Lebesgue measure given by
	\beqy\label{eq:TauLevy}
	&&C_\al\int_0^{+\infty}  u^{-\alpha/2-1} \{(v-u)^{\alpha/2-1}(v'-u)^{\alpha/2-1}\one_{0\le u\le v\wedge v'}- v^{\alpha/2-1} {v'}^{\alpha/2-1}\}\ud u.
	\eeqy
Unfortunately $\tau$ does not satisfy \eqref{eq:AssumpTau} as its density blows up at $v=v'$: we shall treat both case separately.}\en{rmk}

The following lemma insures us that our two main examples satisfy Hypothesis \re{Hyp:ModelLGI2}. 

\beg{lem}\la{lemma10713LGI2}Both \Lvy matrices and Wigner matrices with exploding moments satisfy Hypothesis \re{Hyp:ModelLGI2}. For   Wigner matrices with exploding moments, the measure  $\tau$ is given by  \be\la{tauWMWEM}
{\ud \tau(v,v')}:=\ud v\ud v'\int\f{J_1(2\sqrt{vx})J_1(2\sqrt{v'x})}{\sqrt{vv'}}\ud  m(x)\ee  for $m$ the measure on $\R_+$ of Lemma \re{lemma10713}  and the measure $\mu$ is given by \be\la{muWMWEM}
\ud \mu(v):=-\ud v\int\f{J_1(2\sqrt{vx})}{\sqrt{v}}\ud  m(x).\ee 
\en{lem}

\beg{Th}[A fixed point  system of equations for $\rho_{z,z'}^u(t,t')$]\la{Th:TixPtEqRhozz'}
Under Hypotheses \re{Hyp:Model}, \re{Hyp:ModelLGI} and \re{Hyp:ModelLGI2},  the conclusions of Theorem \re{TCLHTRM} and Corollary \ref{genf} hold and  the parameter $\rho_{z,z'}$ of \eqref{eq:LIntro2} is given by
$$\rho_{z,z'}^u(t,s)=u\rho_{z,z'}^{u,1}(t,s)+(1-u) \rho_{z,z'}^{2}(t,s)$$
where  
 $\rho_{z,z'}^{u,1}(t,s), \rho_{z,z'}^{2}(t,s)  $ are analytic functions on   $\Lambda=\{z,z':  t \Im z>0  \,, s\Im z'>0\}$
 and uniformly continuous on compacts in the variables $(t,s)$ (and $\beta$- H\"older   for $\beta>\alpha/2$ in the L\'evy matrices case, see Lemma \ref{272132}), given by \eqre{1511121h58BD13} as far as $\rho_{z,z'}^{u,2}(t,s)$ is concerned and unique solution, among such functions, of the following fixed point equation \eqre{1511121h58} as far as  $\rho_{z,z'}^{u,1}(t,s)$ is concerned:
\begin{eqnarray} \la{1511121h58}
\rho_{z,z'}^{u,1}(t,s)&=&\int_0^{\op{sgn}_z\infty}\int_0^{\op{sgn}_{z'} \infty}e^{i\frac{v}{t}z+i\frac{v'}{s}z'}  e^{u\rho_{z,z'}^{u,1}(\frac{v}{t},\frac{v'}{s})+(1-u)(\rho_z(\frac{v}{t})+\rho_{z'}(\frac{v'}{s}))}\ud\tau(v,v')\\
&&+ \int_0^{\op{sgn}_z\infty} e^{ i\frac{v}{t}}  e^{\rho_z(\frac{v}{t})} \ud\mu(v) + \int_0^{\op{sgn}_{z'}\infty} e^{ i\frac{v}{s}}  e^{\rho_{z'}(\frac{v}{s})} \ud\mu(v)\nonumber,\\
\rho_{z,z'}^{2}(t,s)&=&\int_0^{\op{sgn}_z\infty}\int_0^{\op{sgn}_{z'} \infty}e^{i\frac{v}{t}z+i\frac{v'}{s}z'}  e^{\rho_z(\frac{v}{t})+\rho_{z'}(\frac{v'}{s})}\ud\tau(v,v')\la{1511121h58BD13} \\
&&+ \int_0^{\op{sgn}_z\infty} e^{ i\frac{v}{t}}  e^{\rho_z(\frac{v}{t})} \ud\mu(v) + \int_0^{\op{sgn}_{z'}\infty} e^{ i\frac{v}{s}}  e^{\rho_{z'}(\frac{v}{s})} \ud\mu(v)\nonumber\,,
\end{eqnarray}
with the notations $\op{sgn}_{z}:=\op{sign}(\Im z)$, $\op{sgn}_{z'}:=\op{sign}(\Im z')$ and the measures $\tau$, $\mu$ defined by Hypothesis \re{Hyp:ModelLGI2} and Remark \re{22101318h17}.
\en{Th}

Let us conclude this introduction with three  remarks.
\begin{enumerate}
\item Let $A=X/a_N$ be a  \Lvy matrix as defined at Definition \re{defA} but with $\alpha=2$.
Then  using Example c) p. 44. of \cite{feller2} instead of the hypothesis made at Equation \eqre{2071216h33}, one can prove that as $N\to\infty$, the spectral measure of $A$ converges almost surely to the semi-circle law with support $[-2,2]$ (see \eqre{3113}). This result somehow ``fills the gap" between heavy-tailed matrices  and finite second moment Wigner matrices.  It allows for example to state that if $P(|X_{ij}|\ge u)\sim cu^{-2}$, with $c>0$, even though the entries of $X$ do not have any second moment, we have that the empirical spectral law of $\f{X}{\sqrt{cN\log(N)}}$ converges almost surely to the semi-circle law with support $[-2,2]$.
	\item Our results also have an application to  {\bf standard Wigner matrices} (\ie symmetric random matrices of the form $A=X/\sqrt{N}$, with $X$ having centered i.i.d. sub-diagonal entries with variance one and not depending on $N$. In this case, 
the function $\Phi$ of \eqre{2071216h33} is   linear, which implies that  $L(z,z')=L(z)L(z')$ for all $z,z'$, so that the covariance is null, \eqre{charrhoAD} is the self-consistent equation satisfied by the Stieltjes transform of the semi-circle law, namely \eqre{3113},  and Corollary \re{genf} only means that for functions $f\in \A$, we have, for the convergence in probability, \be\la{7121214h22}\Tr f(A)-\E\big[\Tr f(A)\big]=o( \sqrt{N}).\ee
 This result is new for 
  Wigner matrices whose entries have a second but not a fourth moment, \eqre{7121214h22} brings  new information. Indeed, for such matrices, which could be called      ``\emph{semi heavy-tailed random matrices}",   the convergence to the semi circle law holds (see \cite{bai-silver-book} or the remark right above that one) but   the largest eigenvalues do not tend to the upper-bound of the support of the semi-circle law, are asymptotically in the scale $N^{\f{4-\al}{2\al}}$ (with $\al\in (2,4)$ as in Equation \eqre{ABP09exponent} when such an exponent exists) and distributed according to a Poisson process (see \cite{ABP09}), and it is not clear what the rate of convergence to the semi-circle law will be. Equation \eqre{7121214h22} shows that this rate  is $\ll N^{-1/2}$.
 	\item About recentering with respect to the limit instead of the expectation, it depends on the rate of convergence in \eqref{1971214h} or in \eqref{2071216h23}. For instance, if $N \esp (a_{11}^{2k}) - C_k = o(N^{-1/2})$ for any $k\geq 1$, then 
		$$\sqrt N \bigg( \esp\Big[ \frac 1 N \Tr A^k \Big] - \Nlim \esp\Big[ \frac 1 N \Tr A^k \Big] \bigg) \limN 0,$$
but otherwise a non trivial recentering should occur. See the end of Section \ref{Sec:CLTmoments}.
\end{enumerate}

 \noindent{\bf Organization of the article:} The CLTs for moments and Stieltjes transform (Theorems \ref{TCLHTRM2} and \ref{TCLHTRM}) are proved in Sections \ref{Sec:CLTmoments} and \ref{sec:prCLTSt10713} respectively. Corollary \ref{genf}, which extends the CLTs for   functions in $W^{\infty,1}_0$, is proved in Section \ref{part:Corgenf}. Theorems \re{Th:TixPtEqRhoz} and \re{Th:TixPtEqRhozz'} about fixed point equations for the functions $\rho_z$ and $\rho_{z,z'}^u$ expressing the limit spectral distribution and  covariance   are proved in Section \re{sec:fixedpoint}.
 Section \ref{part:Hyp} is devoted to the proof of our assumptions for L\'evy matrices and Wigner matrices with exploding moments.
\\ 
\\ {\bf Notation:} In this article, the power functions   are defined on $\C\bck\R_-$ via the standard   determination of the argument  on this set taking values in $(-\pi,\pi)$. The set $\C^+$ (resp. $\C^-$) denotes the open upper (resp. lower)  half plane and for any $z\in \C$, $\op{sgn}_{z}:=\op{sign}(\Im z)$.

\section{CLT for the moments of Wigner matrices with exploding moments}\la{Sec:CLTmoments}

The goal of this section is to prove Theorem \ref{TCLHTRM2}. In order to prove the CLT for the moments of the empirical eigenvalues distribution of $A$, we use a modification of the method of moments inspired by \cite{MAL12} which consists 
of studying more general functionals of the entries of the matrix (the so-called injective moments) than only its moments. We describe this approach below.
\par Let $A$ be a Wigner matrix with exploding moments.  Let $K\geq 1$ be an integer. The normalized trace of the $K$-th power of $A$ can be expanded in the following way.
	\beq
		\frac 1 N \Tr A^K & = & \frac 1 N \sum_{i_1 \etc i_K=1}^N A(i_1, i_2) \dots A(i_{K-1}, i_K) A(i_K,i_1)\\
			& = & \sum_{\pi\in \mathcal P(K)} \underbrace{\frac 1 N \sum_{\mathbf i \in S_\pi} A(i_1, i_2) \dots A(i_{K-1}, i_K) A(i_K,i_1)}_{\tau_N^0[\pi]},
	\eeq
	
	\noindent where $\mathcal P(K)$ is the set of partitions of $\{1\etc K\}$ and $S_\pi$ is the set of multi-indices $\mathbf i = (i_1 \etc i_K)$ in $\{1\etc N\}^K$ such that $n\sim_\pi m \Leftrightarrow i_n = i_m$. 

\par We interpret $\tau_N^0$ as a functional on graphs instead of partitions. Let $\pi$ be a partition of $\{1\etc K\}$. Let $T^\pi =(V,E)$ be the undirected graph (with possibly multiple edges and loops) whose set of vertices $V$ is $\pi$ and with multi-set of edges $E$ given by: there is one edge between two blocks $V_i$ and $V_j$ of $\pi$  for each $n$ in $\{1\etc K\}$ such that $n\in V_i$ and $n+1 \in V_j$ (with notation modulo $K$). Then, one has
	\beqy\label{Def:TraceInj}
		\tau_N^0(\pi)=\tau_N^0[T^\pi] 
	\eeqy
	if for graph $T=(V,E)$, we have denoted 
	\beqy\label{Def:TraceInj2}
\tau_N^0(T) = \frac 1 N \sum_{\substack{ \phi: V \to [N] \\ \textrm{injective}}} \prod_{ e \in E} A\big( \phi(e) \big),\eeqy
where $[N] = \{1\etc N\}$ and for any edge $e= \{i,j\}$ we have denoted $ A\big( \phi(e) \big) = A\big(\phi(i), \phi(j) \big)$. There is no ambiguity in the previous definition since the matrix $A$ is symmetric.

\par In order to prove the convergence of 
		$$\big( Z_N(K) \big)_{K\geq 1} = \bigg( \frac 1{\sqrt N} \Tr A^K - \esp \Big[ \frac 1{\sqrt N} \Tr A^K\Big] \bigg)_{K\geq 1}$$
to a Gaussian process, it is sufficient to prove the convergence of 
 	\eqa\label{Def:ProcessInjectiveTr}
		 \big(Z_N(T^\pi) \big)_{\pi \in \cup_K \mathcal P(K)} :=  \bigg( \sqrt N †\Big( \tau_N^0[T^\pi] -  \esp \big[  \tau_N^0[T^\pi] \big] \Big) \bigg)_{\pi \in \cup_K \mathcal P(K)}
	\qea
to a Gaussian process, since
		\eqa \label{eq:ZKtoZPI}
			\big( Z_N(K) \big)_{K\geq 1}  =  \bigg( \sum_{\pi \in \mathcal P(K)} Z_N(T^\pi) \bigg)_{K\geq 1}.
		\qea
Before giving the proof of this fact, we recall a result from  \cite{MAL122}, namely the convergence of $\tau_N^0[T^\pi]$ for any partition $\pi$. These limits are involved in our computation of the covariance of the limiting process of $\big(Z_N(T^\pi) \big)_{\pi \in \cup_K \mathcal P(K)}$, and this convergence will be useful in the proof of the CLT for Stieltjes transforms latter.

\begin{propo}[Convergence of generalized moments]\label{Prop:CVDistrTraff}
Let $A$ be a Wigner matrix with exploding moments with parameter $(C_k)_{k\geq 1}$. For any partition $\pi$ in $\cup_K \mathcal P(K)$, with $\tau_N^0[T^\pi]$ defined in \eqref{Def:TraceInj},
	\eq
		\esp\big[ \tau_N^0[T^\pi] \big] \limN \tau^0[T^\pi]:=\left\{ \begin{array}{cc} \prod_{k\geq 1} C_k^{q_k} & \textrm{ if } T^\pi \textrm{ is a fat tree,}\\
												0 & \textrm{ otherwise,} \end{array}\right.
	\qe
	where a fat tree is a graph that becomes a tree when the multiplicity of the edges is forgotten, and for $T$ such a graph we have denoted $q_k$ the number of edges of $T$ with multiplicity $2k$.
\end{propo}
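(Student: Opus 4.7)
The starting point is to write, using independence of the sub-diagonal entries and the symmetry of $A$,
$$\esp[\tau_N^0[T^\pi]] = \frac{1}{N}\sum_{\substack{\phi: V\to[N]\\\text{injective}}}\prod_{e\in E^*}\esp\bigl[A(\phi(e))^{m(e)}\bigr],$$
where $E^*$ is the set of distinct edges of $T^\pi$ (ignoring multiplicity) and $m(e)$ is the multiplicity of $e$ in $E$. Injectivity of $\phi$ is precisely what allows us to separate the expectations over distinct unordered pairs $\{\phi(i),\phi(j)\}$.

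Since the entries are centered, any $e$ with $m(e)=1$ kills the whole summand. For each $e$ with $m(e)\geq 2$, the Wigner-with-exploding-moments hypothesis gives $\esp[A(\phi(e))^{m(e)}]=N^{-1}(C'_{m(e)}+o(1))$, uniformly in $\phi$ (the left-hand side depends only on $m(e)$), where $C'_{2k}=C_k$ and $C'$ of odd integer is some limit that need not be zero in general. Using that the number of injective maps is $N^{|V|}(1+O(N^{-1}))$, we obtain
$$\esp[\tau_N^0[T^\pi]] = N^{|V|-1-|E^*|}\prod_{e\in E^*}C'_{m(e)}+o(1),$$
provided every multiplicity is at least $2$.

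Now $T^\pi$ is connected, because it is the image of the closed walk $i_1\to i_2\to \cdots\to i_K\to i_1$, so $|E^*|\geq |V|-1$. Thus $N^{|V|-1-|E^*|}$ is bounded and tends to a nonzero limit only when $|E^*|=|V|-1$, i.e. when $T^\pi$ with multiplicities forgotten is a tree; equivalently when $T^\pi$ is a fat tree. When $T^\pi$ is not a fat tree, the limit is $0$.

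Finally, the key combinatorial fact for the fat tree case is that a closed walk on a tree must traverse each edge an even number of times (by parity of distance to the root, or by induction on the number of leaves). Hence for a fat tree every multiplicity $m(e)$ is automatically even, $m(e)=2k$, and $C'_{m(e)}=C_k$; grouping edges by their (even) multiplicity yields $\prod_{k\geq 1}C_k^{q_k}$. The main obstacle is really just verifying that the odd-multiplicity moments cause no issue, which is resolved by the parity argument just given; the remaining steps are routine asymptotics that use only the hypothesis $N\esp[a^{2k}]\to C_k$.
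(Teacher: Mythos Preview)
Your proof is correct and follows essentially the same route as the computation the paper carries out (inside the proof of Theorem~\ref{Th:CLTtraffic}; the proposition itself is only cited from \cite{MAL122}): factor $\esp[\tau_N^0[T^\pi]]$ over independent entries using injectivity, extract the power $N^{|V|-1-|E^*|}$, invoke connectedness of $T^\pi$ to get $|E^*|\ge |V|-1$, and identify fat trees as the extremal case.

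One small imprecision: you write $\esp[A(\phi(e))^{m(e)}]=N^{-1}(C'_{m(e)}+o(1))$ and speak of a limit $C'_m$ for odd $m$, but the hypothesis only guarantees convergence of $N\E[a^{2k}]$. What is actually needed in the non-fat-tree case is merely \emph{boundedness} of $N\E[|a|^m]$ for $m\ge 2$, which follows from Cauchy--Schwarz,
\[
N\E[|a|^{k+1}]\le \sqrt{N\E[a^{2k}]}\,\sqrt{N\E[a^2]}=O(1),
\]
exactly as in the paper's proof of Theorem~\ref{Th:CLTtraffic}. With this, $\esp[\tau_N^0[T^\pi]]=O(N^{|V|-1-|E^*|})\to 0$ whenever $T^\pi$ is not a fat tree; and in the fat-tree case your parity argument (closed walk on a tree traverses each edge evenly) correctly ensures only even moments appear, so the question of odd-moment limits never arises. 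Once phrased this way, your argument is complete.
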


\begin{Th}[Fluctuations of generalized moments]~\label{Th:CLTtraffic}
Let $A$ be a Wigner matrix with exploding moments. Then, the process $\big(Z_N(T^\pi) \big)_{\pi \in \cup_K \mathcal P(K)}$ defined by \eqref{Def:ProcessInjectiveTr} converges to a centered Gaussian process $\big(z(T^\pi) \big)_{\pi \in \cup_K \mathcal P(K)} $ whose covariance is given by: for any $T^{\pi_1}, T^{\pi_2}$,
	\eq
		\esp\big[ z(T^{\pi_1}) z(T^{\pi_2}) \big] = \sum_{T \in \mathcal P_\sharp(T^{\pi_1}, T^{\pi_2})} \tau^0[T],
	\qe
where $\tau^0[T]$ is given by Proposition \ref{Prop:CVDistrTraff} and $\mathcal P_\sharp(T^{\pi_1}, T^{\pi_2})$ is the set of graphs obtained by considering disjoint copies of  the graphs $T^{\pi_1}$ and $T^{\pi_2}$ and gluing them by requiring
that they have at least one edge (and therefore two ``adjacent'' vertices) in common.
\end{Th}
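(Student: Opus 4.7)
The plan is to prove convergence of all joint moments of the centered process $\big(Z_N(T^\pi)\big)_{\pi}$ to those of the claimed Gaussian process by the method of cumulants: for a family of centered random variables, convergence of covariances plus vanishing of joint cumulants of order $\geq 3$ is equivalent to Gaussian convergence in finite-dimensional distributions. Fix partitions $\pi_1,\ldots,\pi_p$; writing $E_i$ for the edge multiset of $T^{\pi_i}$, I expand
$$Z_N(T^{\pi_i})=\frac{1}{\sqrt N}\sum_{\phi_i}X_{\phi_i},\qquad X_{\phi_i}:=\prod_{e\in E_i}A(\phi_i(e))-\esp\Big[\prod_{e\in E_i}A(\phi_i(e))\Big],$$
where $\phi_i\colon V(T^{\pi_i})\to\{1,\ldots,N\}$ runs over injective maps.

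For the covariance ($p=2$), regroup pairs $(\phi_1,\phi_2)$ by their gluing pattern, the equivalence on $V(T^{\pi_1})\sqcup V(T^{\pi_2})$ defined by $\phi_1(v)=\phi_2(w)$. This rewrites
$$\esp\big[Z_N(T^{\pi_1})Z_N(T^{\pi_2})\big]=\frac{1}{N}\sum_{T}\sum_{\substack{\phi:V_T\to[N]\\\text{injective}}}\Cov\Big(\prod_{e\in E_1}A(\phi(e)),\prod_{e'\in E_2}A(\phi(e'))\Big),$$
where $T$ ranges over quotients of $T^{\pi_1}\sqcup T^{\pi_2}$. Independence of the entries of $A$ forces the inner covariance to vanish unless $\phi(E_1)$ and $\phi(E_2)$ share an unordered pair, i.e., unless $T\in\mathcal P_\sharp(T^{\pi_1},T^{\pi_2})$. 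Under the exploding-moments hypothesis, an edge of $T$ of multiplicity $m\geq 2$ contributes $\esp[A^m]=O(N^{-1})$ (with leading constant $C_{m/2}$ when $m$ is even, and the same order by Cauchy--Schwarz when $m$ is odd), while multiplicity-$1$ edges give zero by centering. Thus each gluing $T$ with $v(T)$ vertices and $e'(T)$ distinct edges contributes $O(N^{v(T)-1-e'(T)})$; the tree inequality $v(T)-e'(T)\leq 1$ combined with connectedness of $T$ produces an $O(1)$ term, saturated exactly when $T$ is a fat tree, and by Proposition~\ref{Prop:CVDistrTraff} this limiting contribution equals $\tau^0[T]$.

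For joint cumulants of order $|G|\geq 3$, one has
$$\kappa_G\big((Z_N(T^{\pi_i}))_{i\in G}\big)=\frac{1}{N^{|G|/2}}\sum_{(\phi_i)_{i\in G}}\kappa\big((X_{\phi_i})_{i\in G}\big),$$
and the classical characterization of cumulants via independence forces the inner cumulant to vanish unless the edge-sharing graph on $G$ (with $i\sim j$ iff $\phi_i(E_i)\cap\phi_j(E_j)\neq\emptyset$) is connected. Regrouping again by gluing patterns, a connected glued graph $T$ contributes $O(N^{v(T)-|G|/2-e'(T)})$; connectedness forces $v(T)-e'(T)\leq 1$, yielding an overall $O(N^{1-|G|/2})=o(1)$ for $|G|\geq 3$. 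All higher joint cumulants therefore vanish in the limit, and jointly with the covariance computation this gives the announced Gaussian convergence through Formula \eqref{eq:ZKtoZPI}.

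The main obstacle is the combinatorial and Euler-characteristic bookkeeping of the last two steps: one must identify the precise constraints on gluing patterns that ensure a non-vanishing cumulant, handle edges of arbitrary (even or odd) multiplicity uniformly, and justify exchange of limit and finite-but-rich sum. Since odd-multiplicity edges carry the same $O(N^{-1})$ order as even ones, the argument that non-fat-tree gluings are subleading rests purely on the graph-theoretic tree inequality $v(T)-e'(T)\leq c(T)$, with the saturation case giving the fat-tree structure of Proposition~\ref{Prop:CVDistrTraff}. This is precisely the ``traffics'' combinatorial framework developed in \cite{MAL12}, which I would import and adapt to the present setting.
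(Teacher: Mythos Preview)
Your approach is correct and takes a genuinely different route from the paper's. The paper works directly with joint \emph{moments}: it expands $\esp[Z_N(T_1)\cdots Z_N(T_n)]$ over gluing patterns $\sigma$, then further expands the product of centered variables over subsets $B\subset\{1,\ldots,n\}$ via inclusion--exclusion, and isolates the leading contribution through three separate power-counting factors ($N^{c_\sigma-n/2}$, $N^{|\bar E_\sigma|-|\bar E_B|-\sum_{j\notin B}|\bar E_{\{j\}}|}$, $N^{|\sigma|-c_\sigma-|\bar E_\sigma|}$), each forcing one constraint ($\bar\sigma$ a pair partition, $B$ the full set, $\bar T_\sigma$ a forest). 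This yields the Wick formula directly.

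Your cumulant route is more economical: the vanishing of mixed cumulants on independent families replaces the paper's identification of $\bar\sigma\in\mathcal P_2(n)$ by forcing connectedness of the glued graph in one stroke, and then a single application of the tree inequality $v(T)-e'(T)\le 1$ gives $O(N^{1-|G|/2})$ without the intermediate $B$-expansion. What the paper's approach buys is that the leading constant (and hence the covariance formula) is read off from the full moment expansion with no extra work, whereas in your sketch the step ``the cumulant is $O(N^{-e'(T)})$'' still requires expanding $\kappa((P_i)_{i\in G})$ by M\"obius inversion and observing that every term is a product over blocks of $\prod_e \esp[A^{m_B(e)}]=O(N^{-|\bar E_B|})$ with $\sum_B|\bar E_B|\ge e'(T)$; you should make this explicit. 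Both arguments rest on the same graph-theoretic backbone (connectedness plus Euler characteristic) and the same moment estimate coming from the exploding-moments hypothesis.
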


\begin{proof}We show the convergence of joint moments of $\big(Z_N(T^\pi) \big)$. Gaussian distribution being characterized by its moments, this will prove the theorem. Let $T_1 = (V_1,E_1) \etc T_p = (V_n,E_n)$ be finite undirected graphs, each of them being of the form $T^\pi$ for a partition $\pi$. We first write
	\beq
		\esp \Big[ Z_N(T_1) \dots Z_N(T_n) \Big] & = & \frac 1{N^{\frac n 2}}  \sum_{ \substack{ \phi_1 \etc \phi_n \\ \phi_j : V_j \to [N] \textrm{ inj.}}} \underbrace{  \esp \Bigg[ \prod_{j=1}^n \bigg( \prod_{e \in E_j} A\big(\phi_j(e) \big) - \esp \Big[ \prod_{e \in E_j} A\big(\phi_j(e) \big)  \Big] \bigg) \Bigg]}_{\omega_N(\phi_1 \etc \phi_n)}\\
	   &  = & \sum_{\sigma \in \mathcal P(V_1 \etc V_n)} \frac 1 {N^{\frac n 2}} \sum_{(\phi_1 \etc \phi_n)\in S_\sigma} \omega_N(\phi_1 \etc \phi_n),
	\eeq
where 
\begin{itemize}
	\item $\mathcal P(V_1 \etc V_n)$ is the set of partitions of the disjoint union of $V_1\etc V_n$ whose blocks contain at most one element of each $V_j$,
	
	\item $S_\sigma$ is the set of families of injective maps, $\phi_j: V_j\to [N], j=1\etc n$, such that for any $v\in V_j, v' \in V_{j'}$, one has $\phi_j(v) = \phi_{j'}(v') \Leftrightarrow v \sim_{\sigma} v'$.
\end{itemize}

 First, it should be noticed that by invariance in law of $A$ by conjugacy by permutation matrices, for any $\sigma$ in $\mathcal P(V_1 \etc V_n)$ and $(\phi_1 \etc \phi_n)$ in $S_\sigma$, the quantity $\omega_N(\phi_1 \etc \phi_n)$ depends only on $\sigma$. We then denote $\omega_N(\sigma) = \omega_N(\phi_1 \etc \phi_n)$. Moreover, choosing a partition in $\mathcal P(V_1 \etc V_n)$ is equivalent to merge certain vertices of different graphs among $T_1 \etc T_n$.  We equip $\mathcal P(V_1 \etc V_n)$ with
 the edges of $T_1,\ldots, T_n$ and say that two vertices are adjacent if there is an edge between them.
 We denote by $\mathcal P_\sharp(V_1 \etc V_n)$ the subset of $\mathcal P(V_1 \etc V_n)$ such that any graph has two adjacent vertices that are merged to two adjacent vertices of an other graph. By the independence of the entries of $X$ and the centering of the components in $\omega_N$, for any $\sigma$ in $\mathcal P(V_1 \etc V_n)\setminus \mathcal P_\sharp(V_1 \etc V_n)$ one has $\omega_N(\sigma)=0$. Hence, since the cardinal of $S_\sigma$ is $\frac {N!}{(N-|\sigma|)!}$, we get
	\beqy
		\esp \Big[ Z_N(T_1) \dots Z_N(T_n) \Big] & = & \sum_{\sigma \in \mathcal P_\sharp(V_1\etc V_n)} N^{-\frac n 2} \frac { N!}{(N-|\sigma|)!} \ \omega_N(\sigma) \nonumber\\
		& = & \sum_{\sigma \in \mathcal P_\sharp(V_1\etc V_n)} N^{-\frac n 2+ |\sigma|} \ \omega_N(\sigma)  \big( 1 + O( N^{-1}) \big). \label{eq:FirstExpanMoments}
	\eeqy

 Let $\sigma$ in $\mathcal P_\sharp(V_1\etc V_n)$. We now analyze the term $\omega_N(\sigma)$. We first expand its product.
	$$\omega_N(\sigma) = \sum_{B \subset \{1\etc n\}} (-1)^{n-|B|} \esp\bigg[ \prod_{j\in B} \prod_{e \in E_j} A\big( \phi_j(e) \big)\bigg]  \times \prod_{j\notin B} \esp\bigg[\prod_{e \in E_j} A\big( \phi_j(e) \big)\bigg].$$
	
 Let $B\subset \{1\etc n\}$. Denote by $T_B$ the graph obtained by merging the vertices of $T_j$, $j\in B$ that belong to a same block of $\sigma$. For any $k\geq 1$ denote by $p_k$ the number of vertices of $T_B$ where $k$ loops are attached. For any $k\geq\ell\geq 0$, denote by $q_{k,\ell}$ the number of pair of vertices that are linked by $k$ edges in one direction and $\ell$ edges in the other. Denote by $\mu_N$ the common law of the entries of $\sqrt N A$. By independence of the entries of $A$, for any $(\phi_1 \etc \phi_n)$ in $S_\sigma$, one has
	\beqy
		\esp\bigg[ \prod_{j\in B} \prod_{e \in E_j} A\big( \phi_j(e) \big)\bigg] & = & \prod_{k\geq 1}\bigg( \frac{ \int t^k \textrm d \mu_N(t)}{N^{\frac k 2}} \bigg)^{p_k} \prod_{k,\ell\geq 0} \bigg( \frac{ \int t^{k+\ell} \textrm d \mu_N(t)}{N^{\frac {k+\ell} 2}} \bigg)^{q_{k,\ell}}\nonumber \\
		 & = & N^{-|\bar E_B|} \underbrace{ \prod_{k\geq 1}\bigg( \frac{ \int t^k \textrm d \mu_N(t)}{N^{\frac k 2-1}} \bigg)^{p_k} \prod_{k,\ell\geq 0} \bigg( \frac{ \int t^{k+\ell} \textrm d \mu_N(t)}{N^{\frac {k+\ell} 2-1}} \bigg)^{q_{k,\ell}} }_{\delta_N(B)} \label{eq:SecondExpanMoments},
	\eeqy

\noindent where $|\bar E_B|$ is the number of edges of $T_B$ once the multiplicity and the orientation of edges are forgotten. Recall assumption 
\eqref{1971214h}: for any $k\geq 1$,
	\eq
		N \esp\big[ (a_{i,j})^{2k}\big] = \esp \bigg[ \frac{ (\sqrt N a_{i,j} )^{2k}}{N^{k-1}} \bigg] = \frac{ \int{ t^{2k} \textrm d \mu_N(t)}}{N^{k-1}} \limN C_k.
	\qe

By the Cauchy-Schwarz inequality, for any $k\geq 1$,
	\eq
		\frac{ \int{ |t|^{k+1} \textrm d \mu_N(t)}}{N^{\frac{k+1}2-1}} \leq \sqrt{ \frac{ \int{ t^{2k} \textrm d \mu_N(t)}}{N^{k-1}}} \times \sqrt{ \int t^2 \textrm d \mu_N(t) } = O(1).
	\qe

Hence, since the measure $\mu_N$ is centered, the quantity $\delta_N(B)$ is bounded. Denote $T_{\sigma}$ 
the graph obtained by merging the vertices of $T_1 \etc T_n$ that belong to a same block of $\sigma$, $|\bar E_\sigma|$ 
its number of edges when orientation and multiplicity is forgotten, and by $c_\sigma$ its number of components. We obtain from \eqref{eq:FirstExpanMoments} and \eqref{eq:SecondExpanMoments}
	\beq
		\lefteqn{\esp \Big[ Z_N(T_1) \dots Z_N(T_n) \Big]}\\
		 & = & \sum_{\sigma \in \mathcal P_\sharp(V_1\etc V_n)} \sum_{B \subset \{1\etc n\}} N^{-\frac n 2 + |\sigma| - |\bar E_B| - \sum_{j\notin B} |\bar E_{\{j\}}|}  (-1)^{n-|B|} \delta_N(B)\prod_{j\notin B}\delta_N(\{j\})  \big( 1 + O( N^{-1}) \big) \\
		 & = & \sum_{\sigma \in \mathcal P_\sharp(V_1\etc V_n)} \sum_{B \subset \{1\etc n\}}  N^{c_\sigma - \frac n 2} \times  N^{|\bar E_\sigma| - |\bar E_B| - \sum_{j\notin B} |\bar E_{\{j\}}|}  \times N^{|\sigma| - c_\sigma -|\bar E_\sigma|} \\
		 & & \ \ \ \ \times (-1)^{n-|B|} \delta_N(B)\prod_{j\notin B}\delta_N(\{j\})  \times \big( 1 + O( N^{  -1}) \big) \,.
	\eeq
 A partition $\sigma \in \mathcal P_\sharp(V_1\etc V_n)$ induces a partition $\bar \sigma$ of $\{1\etc n\}$: $i\sim_{\bar \sigma} j$ if and only if $T_i$ and $T_j$ belong to a same connected component of $T_\sigma$. Denote by $\mathcal P_2(n)$ the set of pair partitions of $\{1\etc n\}$. One has
	\beqy\label{eq:Contr1}
		N^{c_\sigma - \frac n 2} = \one_{\bar \sigma \in \mathcal P_2(n)} + O(N^{-1}).
	\eeqy

 Secondly, one has $|\bar E_\sigma| - |\bar E_B| - \sum_{j\notin B} |\bar E_{\{j\}}| \leq 0$ with equality if and only if $B = \{1\etc n\}$, so that
	\beqy\label{eq:Contr2}
		N^{|\bar E_\sigma| - |\bar E_B| - \sum_{j\notin B} |\bar E_{\{j\}}|}  = \one_{B= \{1\etc n\}} + O(N^{-1}).
	\eeqy
	
Moreover, by  \cite[Lemma 1.1]{GUI}  $|\sigma| - c_\sigma -|\bar E_\sigma|$ is the number of cycles of $\bar T_\sigma$, the graph obtained from $T_\sigma$ by forgetting the multiplicity and the orientation of its edges. Hence, 
	\beqy\label{eq:Contr3}
		 N^{|\sigma| - c_\sigma -|\bar E_\sigma|}= \one_{\bar T_\sigma \textrm{ is a forest}} + O(N^{-1}).
	\eeqy

 By \eqref{eq:Contr1}, \eqref{eq:Contr2} and \eqref{eq:Contr3}, if we denote by $\delta_N(\sigma) = \delta_N(\{1\etc n\})$ we get
	\beqy
		\lefteqn{\esp \Big[ Z_N(T_1) \dots Z_N(T_n) \Big]}\\
		 & = & \sum_{\pi \in \mathcal P_2(n)} \ \sum_{\substack{\sigma \in \mathcal P_\sharp(V_1\etc V_n) \\ \textrm{s.t. } \bar \sigma = \pi}} \one_{\bar T_\sigma \textrm{ is a forest of } \frac n 2 \textrm{ trees }} \  \delta_N(\sigma) + O( N^{-1})\nonumber\\
		 & = &\sum_{\pi \in \mathcal P_2(n)} \ \prod_{\{i,j\} \in \pi} \sum_{ \sigma \in \mathcal P_\sharp(V_i, V_j) } \one_{\bar T_\sigma \textrm{ is a tree }} \  \delta_N(\sigma) + O( N^{-1}),\la{14149}
	\eeqy
	
	\noindent where we have used the independence of the entries of $A$ to split $\delta_N$. The case $n=2$ gives
	\beq
		\esp \Big[ Z_N(T_1)   Z_N(T_2) \Big] = \underbrace{\sum_{\sigma \in \mathcal P_\sharp(V_1,V_2)} \one_{ \bar T_\sigma \textrm{ is a tree}} \ \delta(\sigma)}_{M^{(2)}(T_1,T_2)} + o(1),
	\eeq
where $\delta(\sigma) = \Nlim \delta_N(\sigma)$, which exists since $\bar T_\sigma $ is a tree. Indeed, in the definition \eqref{eq:SecondExpanMoments} of $\delta(\sigma)$, we have $p_k = q_{k,\ell}=0$ for any $k \neq \ell$. Moreover, we obtain that $\delta(\sigma) = \tau^0[T_\sigma]$ defined in Proposition \ref{Prop:CVDistrTraff}, and the sum over $\sigma$ on $\mathcal P_\sharp(V_1,V_2)$ can be replaced by a sum over graphs $T$ obtained by identifying certain adjacent vertices of $T_1$ with adjacent edges $T_2$, since $\tau^0[T]$ is zero if $T$ is not a fat tree. We then obtain as expected the limiting covariance
		$$M^{(2)}(T_1,T_2) = \sum_{ T \in \mathcal P_\sharp(T_1,T_2)} \tau^0[ T].$$

 The general case $n\geq 3$ in \eqref{14149} gives the Wick formula
	\beq
		 \esp \Big[ Z_N(T_1) \dots Z_N(T_n) \Big] = \sum_{\pi \in \mathcal P_2(n)} \ \prod_{\{i,j\} \in \pi} M^{(2)}(T_i,T_j)+ o(1),
	\eeq
 which characterizes the Gaussian distribution.
\end{proof}

Remark that  up to \eqref{14149} the errors terms are of order $O(N^{-1})$, and so if $N \esp [a_{11}^{2k}] = C_k + o(\sqrt N^{-1})$ for any $k\geq 1$, then $\delta_N(B) = \delta(B) + o(\sqrt N^{-1})$ so that we find that
$$\esp\Big[ \frac{1}{N}\Tr A^k\Big]=\lim_{N\ra\infty}\esp\Big[ \frac{1}{N}\Tr A^k\Big]+o( N^{-1/2})$$
and therefore we  obtain the same CLT if we recenter with  the limit or the expectation, as noticed in the introduction.


\section{CLT for  Stieltjes transform and the method of martingales}\la{sec:prCLTSt10713}

Let $A=[a_{i,j}]$ be an $N\ti N$ matrix satisfying Hypothesis \re{Hyp:Model}.  
First of all, by the resolvant formula, we know that for any $z\in \C\bck \R$, there is a constant $C$ \st for any pair $X, Y$ of $N\ti N$ symmetric matrices,  $$|\Tr ((z-X)^{-1}-(z-Y)^{-1})|\le C\op{rank}(X-Y).$$ Hence one can suppose that in Hypothesis \re{Hyp:Model}, $b$ is centered.

For any $z$ in $\mbb C\setminus \mbb R$, recall that $G(z) = (z - A)^{-1}$. To prove Theorem \ref{TCLHTRM}, we show that any linear combination of the random variables
		$$Z_N(z):=\frac 1 {\sqrt N} \Tr G(z) - \esp\Big[ \frac 1 {\sqrt N} \Tr G(z)  \Big], \ z \in \mbb C \setminus \mbb R,$$
and their complex transposes converges in distribution to a complex Gaussian variable. Since $\overline{G(z)} = G(\bar z)$,  it is enough to fix a linear combination
	 \be\la{309129:33}
	 	M(N):=\sum_{i=1}^p\lam_i  Z_N(z_i),
	\ee
for some fixed $p\ge 1$ and $\lam_1, \ld, \lam_p\in \C$, $z_1,\ld, z_p\in \C\bck\R$, and prove that $M(N)$ is asymptotically Gaussian with the adequate covariance.
 
\subsection{The method of martingales differences and reduction to the case $p=1$}\label{sec:Step1Levy}
For all $N$, we have  
		$$M(N)-\E[M(N)]=\sum_{k=1}^N X_k\qquad\trm{ with }\quad	X_k:=(\E_k-\E_{k-1})\big[M(N)\big],$$ 
where $\E_k$ denotes the conditional  expectation with respect to the $\si$-algebra generated by the $k\ti k$ upper left corner of $A$. In view of Theorem \re{thconvmart} of the appendix about fluctuations of martingales, 
it is enough  to prove that  we have   
	\be\la{summodsADTC} 
		\sum_{k=1}^N\E_{k-1}[X_k^2]\Ninf \sum_{i,j=1}^p\lam_i\lam_j C(z_i,{z_j}),
	\ee 
	\be\la{summodsADTCbis} 
		\sum_{k=1}^N\E_{k-1}\big[|X_k|^2\big]\Ninf \sum_{i,j=1}^p\lam_i\ovl{\lam_j} C(z_i,\ovl{z_j})
	\ee 
 and that for each $\eps>0$,  \be\la{epslimit}\sum_{k=1}^N \E[|X_k|^2\one_{|X_k|\ge \eps}]\Ninf 0. \ee

Notice first that \eqre{summodsADTC}  implies \eqre{summodsADTCbis}.
 Let us now prove  \eqre{epslimit}. The proof of \eqre{summodsADTC}  will then be the main difficulty of the proof of Theorem \re{TCLHTRM}.

\noindent{\it Proof of  \eqre{epslimit}.} Let $A_k$ be the symmetric matrix with size $N-1$ obtained by removing the $k$-th row and the $k$-th column of $A$ and set $G_k(z):=\ff{z-A_k}$. 
Note that $\E_k G_k(z)=\E_{k-1}G_k(z)$, so that we can write
$$X_k=\sum_{i=1}^p\lam_i\ti (\E_k-\E_{k-1})[\ff{\sqrt{N}}(\Tr G(z_i)- \Tr G_k(z_i))].$$ 
 Hence by \eqre{307122} of Lemma \re{lat} in the appendix, there is $C$ \st  
 for all $N$ and all $k$, $$|X_k|\le  \f{C \max_i|\lam_i||\Im z_i|^{-1}}{\sqrt{N}}.$$ Thus for $N$ large enough, we have that for all $k$, $|X_k|^2\one_{|X_k|\ge \eps}=0$ and  \eqre{epslimit} is proved.\hfill$\square$\\ \\

 We now pass to the main part of the proof of Theorem \re{TCLHTRM}, namely the proof of \eqre{summodsADTC}. 
 It is divided into several steps.\\ \\ 
 
 We can get rid of the linear combination in \eqre{309129:33} and assume $p=1$.
 As $\ovl{\Tr G(z_i)}=\Tr G(\ovl{z_i})$, both 
 $X_k^2$ and $|X_k|^2$ are linear combinations of terms of the form $$(\E_k-\E_{k-1})[\ff{\sqrt{N}}\Tr G(z)]\ti (\E_k-\E_{k-1})[\ff{\sqrt{N}}\Tr G(z')],$$ with $z,z'\in \C\bck\R$. 
 As a consequence, we shall only  fix $z\in \C\bck\R$ and prove that for 
	$$
		Y_k(z):=(\E_k-\E_{k-1})[\ff{\sqrt{N}}\Tr G(z)],
	$$
we have   the convergence in probability for any $z,z'\in  \C\bck\R$
	\be\la{summods} 
		C_N:=\sum_{k=1}^N\E_{k-1}[Y_k(z)Y_k(z')]\Ninf  C(z,z').
	\ee 

 First, for $G_k$ as introduced in the proof of  \eqre{epslimit} above, as  $\E_k G_k(z)=\E_{k-1}G_k(z)$ again, we have $Y_k(z)=(\E_k-\E_{k-1})[\ff{\sqrt{N}}(\Tr G(z)- \Tr G_k(z))]$. Hence by   Lemma \re{lat},   we can write
 \be\la{187121}Y_k(z) =\ff{\sqrt{N}}(\E_k-\E_{k-1})\f{1+ \ba_k^*G_k(z)^2\ba_k}{z- a_{kk} - \ba_k^*G_k(z)\ba_k },\ee where $\ba_k$ is the $k$-th column of $A$ where the $k$-th entry has been removed.
To prove \eqref{summods}, we shall first show that we can get rid of the off diagonal terms    $\sum_{j\neq\ell} \ba_{k}(j)\ba_{k}(\ell) G_k(z)_{j\ell}$  and  $\sum_{j\neq\ell} \ba_{k}(j) \ba_{k}(\ell) (G_k(z)^2)_{j\ell}$   in the above expression.

\subsection{Removing the off-diagonal terms}\label{sec:Step2Levy}

In this section, we prove that we can 
   replace ${Y}_k$ in \eqref{summods} by \be\la{187125}  \tY_k(z):=\ff{\sqrt{N}}(\E_k-\E_{k-1})\f{1+\sum_{j=1}^{N-1}\ba_k(j)^2(G_k(z)^2)_{jj}}{z-\sum_{j=1}^{N-1}\ba_k(j)^2G_k(z)_{jj} }.\ee  
 Note first that by  Equation \eqre{297125} of  Lemma \re{lem267121} in the appendix, we have the deterministic bound,  for all $z\in \mathbb C\backslash\mathbb R$,
 $$\lf|\f{1+\sum_j\ba_k(j)^2(G_k(z)^2)_{jj}}{z-\sum_j\ba_k(j)^2G_k(z)_{jj} }\ri|\le 2|\Im z|^{-1}$$
 hence  we deduce
  \be\la{297129h42}| \tilde{Y}_k| \le\f{4|\Im z|^{-1}}{\sqrt{N}}.\ee

\beg{lem}\la{20812} Let  $\tY_k(z')$ be defined in   the same way as $\tY_k(z)$ in \eqre{187125}, replacing $z$ by $z'$. Set also      
\be\la{187125b}\tilde{C}_N:=\sum_{k=1}^N\E_{k-1}[\tilde{Y}_k(z)\tilde{Y}_k(z')]. \ee Then for $C_N$ defined as in  \eqre{summods},  as $N\to\infty$,  we have the convergence  \be\la{19712_12h} {C}_N - \tilde{C}_N\cvLone 0.\ee
\en{lem}

\bpr 
We have $${C}_N - \tilde{C}_N=\sum_{k=1}^N\E_{k-1}[{Y}_k(z){Y}_k(z')-\tilde{Y}_k(z)\tilde{Y}_k(z')]=\int_{u=0}^1h_N(u)\ud u,$$
with $h_N(u):=N\E_{k-1}[{Y}_k(z){Y}_k(z')-\tilde{Y}_k(z)\tilde{Y}_k(z')]$ for $k=\lceil Nu\rceil$.
As we already saw, by Lemmas \re{lat} and  \re{lem267121} of the appendix, there is a constant $C$ (independent of $k$ and $N$) \st $  |{Y}_k|$, $|{Y}_k'|$, $|\tilde{Y}_k|$ and $|\tY_k(z')|$ are all  bounded above by $C/\sqrt{N}$, so that $|h_N(u)|\le 2C$. Hence by dominated convergence, it suffices to prove that for any fix $u$, $\|h_N(u)\|_{L^1}\lto 0$, which is equivalent to $h_N(u)\lto 0$ in probability. This is a direct consequence of Lemma \re{vanishing_nondiagterms_10713} of the appendix (which can be applied here because we explained, at the beginning of Section \re{sec:prCLTSt10713}, that one can suppose the random variable $b$ of Hypothesis \re{Hyp:Model} to be centered) and of the fact that the function of two complex variables $\vfi(x_1, x_2):=\f{1+x_1}{z-x_2}$ has a uniformly bounded gradient on the set $\{(x_1,x_2)\in \C^2\ste \Im x_2\le 0\}$.\epr

It remains to prove that  the sequence $\tilde{C}_N$ introduced at \eqre{187125b} converges in \pro as $N$ goes to infinity. Note that we have \be\la{148129h19}\tilde{C}_N= \int_{u=0}^1N\E_{k-1}[\tilde{Y}_{k(u)}(z)\tilde{Y}_{k(u)}(z')]\ud u \qquad\trm{ with $k(u):={\lceil Nu\rceil}$}. 
\ee
By \eqre{297129h42}, the integrand is uniformly bounded by $4|\Im z|^{-1}$. Hence, by dominated convergence, it is enough to prove that for any fixed 
$u\in (0,1)$, as $k,N$ tend to infinity in such a way that $k/N\lto u$, we have the convergence in probability of
	$N\E_{k-1}[\tilde{Y}_{k}(z)\tilde{Y}_{k}(z')].$

Now, set for $z,z'\in \C\backslash \R$,
 \be\la{2071215}f_k:=\f{1+\sum_j\ba_k(j)^2(G_k(z)^2)_{jj}}{z-\sum_j\ba_k(j)^2G_k(z)_{jj} },\quad f_k':=\f{1+\sum_j\ba_k(j)^2(G_k(z')^2)_{jj}}{z-\sum_j\ba_k(j)^2G_k(z')_{jj} }.\ee 
We have $\sqrt{N}\tY_k(z)=(\E_k-\E_{k-1})f_k$, so for  $z,z'\in \C\backslash \R$,
  $$N\E_{k-1}(\tY_k(z)\tY_k(z'))=\E_{k-1}[\E_kf_k\E_kf_k']-\E_{k-1}f_k\E_{k-1}f_k'.$$ Let us denote by $\E_{\ba_k}$ the expectation with respect to the randomness of the $k$-th column of $A$ (\ie the conditional expectation with respect to the $\si$-algebra generated by the $a_{ij}$'s \st $k\notin \{i,j\}$). Note that $\E_{k-1}=\E_{\ba_k}\circ\E_k=\E_{k}\circ \E_{\ba_k}$,  hence 
   \begin{equation}\la{1481212h51bis} 
 N\E_{k-1}(\tY_k(z)\tY_k(z'))=\E_{\ba_k}[\E_kf_k\E_kf_k']-\E_{k}\E_{\ba_k}f_k\ti \E_{k}\E_{\ba_k}f_k'.
\end{equation} Now, we introduce (on a possibly  enlarged \pro space where the conditional expectations $\E_{k-1}, \E_k,\E_{\ba_k}$ keep their definitions as the conditional expectations with respect to the same $\si$-algebras as above) an  $N\ti N$  random symmetric matrix 
	\be\la{88123}
		A'=[a_{ij}']_{1\le i,j\le N}
	\ee
 such that: \bgt\ite  the  $a_{ij}'$'s \st $i>k$ or  $j>k$ are i.i.d. copies of $a_{11}$ (modulo the fact that $A'$ is symmetric),     independent of $A$,
\ite for all other pairs $(i,j)$, 
$a_{ij}'=a_{ij}$,
\ent then we have $$\E_kf_k\E_kf_k'=\E_k(f_k\ti f_k''),$$ where $f_k''$ is  defined out of $A'$   in the same way as $f_k'$ is defined out of $A$ in \eqre{2071215} (note that the $k$-th column is the same in $A$ and in $A'$).
It follows that 
\beqy\nonumber N\E_{k-1}(\tY_k(z)\tY_k(z'))&=&\E_{\ba_k}[\E_k(f_k\ti f_k'')]-\E_{k}\E_{\ba_k}f_k\ti\E_{k}\E_{\ba_k}f_k'\\ \la{247121}
&=&\E_{k}[\E_{\ba_k}(f_k\ti f_k'')]-\E_{k}\E_{\ba_k}f_k\ti\E_{k}\E_{\ba_k}f_k'.\eeqy

 We shall in the sequel prove that as $N$ tends to infinity, regardless to the value of $k$, we have the almost sure convergences 
 \be\la{1481214h45}\E_{\ba_k}f_{k}\;\lto\; L(z)\qquad\qquad \E_{\ba_k}f_k'\;\lto\; L(z')
 \ee
 and for any $u\in (0,1)$, as $N,k\lto \infty$ so  that $k/N\lto u$, we have 
  \be\la{1481214h45bis}\E_{\ba_k}(f_{k}\ti f_{k}'')\;\lto\; \Psi^u({z,z'}) \, a.s,\ee
where $L(z,z') = \int_0^1 \Psi^u(z,z')du$.  The convergences of \eqre{1481214h45} and \eqre{1481214h45bis} are based on an abstract convergence result stated in next section, where we use the convergence of generalized moments of Proposition \ref{Prop:CVDistrTraff}. They are stated in Lemmas \ref{Lem:CVdeL} and \ref{Lem:CVdePsi} respectively.
  
 Note that by Lemma \re{lem267121},   \be\la{148129h42jb}| f_k| \le 4|\Im z|^{-1} ,\ee so once \eqre{1481214h45} and \eqre{1481214h45bis} proved, we will have the convergence in $L^2$ 
 	$$\E_{k}[\E_{\ba_k}(f_{k(u)}\ti f_{k(u)}'')]-\E_{k}\E_{\ba_k}f_k\ti\E_{k}\E_{\ba_k}f_k'\;\lto\;\Psi^u({z,z'})-L(z)L(z').$$

Thus by \eqre{247121}, we will have proved the convergence of \eqre{1481212h51bis}, hence completing the proof of the theorem.

\subsection{An abstract convergence result}\label{sec:Step3Levy}

Remember that $A_k=(a_{ij})$ is the square matrix of size $N-1$ obtained by removing the $k$-th row and the $k$-th column of $A$, that $G_k(z)  = (z - A_k)^{-1}$ and that $A'_k$ is a copy of $A_k$ where the entries $(i,j)$ for $i$ or $j\geq k$ are independent of $A_k$ and the other are those of $A_k$. We denote $G_k'(z') = (z' - A'_k)^{-1}$.

\beg{lem}\la{Lem:CVparlesTraffics} Under Hypothesis \ref{Hyp:Model} and condition \eqref{CondMom}, as $N$ goes to infinity and $\frac k N$ tends to $u$ in $(0,1)$, the random \pro measure on $\C^2$ 
		\be\la{118121}
			\nu^{k,N}_{z,z'}:=\frac 1 {N-1} \sum_{j=1}^{N-1} \del_{ \{ G_k(z)_{jj} , G'_k(z')_{jj}  \}}, \ z,z' \in \mbb C \setminus \mbb R.
		\ee
converges weakly almost surely 
 to a deterministic probability measure $\nu_{z,z'}^u$ on $\mbb C^2$.
\en{lem}

\bpr By e.g. Theorem C.8 of \cite{alice-greg-ofer}, it is enough  to prove that for any bounded and Lipschitz function $f$, $\nu^{k,N}_{z,z'}(f)$ converges   almost surely  to $\nu_{z,z'}^u(f)$. Moreover,  adapting the proof of Lemma \ref{le:concres}, one can easily see that for any such $f$, 
		$$\nu^{k,N}_{z,z'}(f)-\E [\nu^{k,N}_{z,z'}(f)]$$
converges almost surely  to zero. The only modification of the proof is to complete the resolvent identity by noticing that 
	$$(z -A_k)^{-2} - (z-A_k^B)^{-2} = (z-A_k^B)^{-2} \big( z(A_k-A_k^B) - (A_k - A_k^B)^2 - 2 A_k^B(A_k - A_k^B) \big) (z-A_k)^{-2},$$
which gives that this matrix has rank bounded by $3 \times \op{rank}(A-B)$. Hence it is enough  to prove that the deterministic sequence of measures $\E [\nu^{k,N}_{z,z'}(\;\cdot\;)]$ converges weakly (the measure $\E [\nu^{k,N}_{z,z'}(\;\cdot\;)]$ is defined by $\E [\nu^{k,N}_{z,z'}(\;\cdot\;)](f)=\E [\nu^{k,N}_{z,z'}(f)]$ for any bounded Borel function $f$ : one can easily verify that this is actually a \pro measure). Moreover, the measure $\E [\nu^{k,N}_{z,z'}(\;\cdot\;)]$ always belongs to the set of \pro measures supported by    the compact subset $B(0, |\Im z|^{-1})^2$. The set of such \pro measures is compact for the topology of weak convergence. Hence it suffices to prove that it admits at most one accumulation point.

Let us first explain how the $a=b_\eps+c_\eps$ decomposition of Hypothesis \re{Hyp:Model} allows to reduce the problem to the case of Wigner matrices with exploding moments. For any $\eps>0$, the matrix $A^\eps:=[b_{\eps,ij}-\E b_{\eps,ij}]_{i,j=1}^N$ is a Wigner matrix with exploding moments by Hypothesis \re{Hyp:Model}. 

{\bf Claim : } Almost surely, $\ds\limsup_{N\to\infty} \f{\op{rank}(A-A^\eps)}{N} \le \eps$. 
\\
Indeed, the rank of $A-A^\eps$ is at most $1+\sum_{i=1}^N\one_{\exists j\le i, \, c_{\eps,ij}\ne 0}.$ Hence, by  independence
and  concentration inequalities such as Azuma-Hoeffding inequality,  
it suffices to prove that $$\limsup_{N\to\infty}\ff{N}\sum_{i=1}^N\p(\exists j\le i, \, c_{\eps,ij}\ne 0)\le \f{\eps}{2}.$$
But we have, for $p:=\p(c_\eps\ne 0)$,  $$\ff{N}\sum_{i=1}^N\p(\exists j\le i, \, c_{\eps,ij}\ne 0)=\ff{N}\sum_{i=1}^N1-(1-p)^i=1-(1-p)\f{1-(1-p)^N}{pN}$$
As $pN$ is small, $(1-p)^N=1-Np+O(Np^2)$,   $$\ff{N}\sum_{i=1}^N\p(\exists j\le i, \, c_{\eps,ij}\ne 0) =  O(Np)$$ and the Claim is proved, by Hypothesis \eqre{limceps}.

We can adapt arguments of \cite{BAGheavytails} 
(see also Equation (91) of \cite{charles_alice}) to see that  it is enough to prove the weak convergence of $\E \big [\nu^{k,N}_{z,z'}(\;\cdot\;) \big]$ when $A$ is a Wigner matrix with exploding moments. Since these measures are uniformly compactly supported, it is sufficient to prove the convergence in moments.

We consider a polynomial $P=x_1^{n_1} \overline{ x_1}^{m_1}x_2^{n_2}\overline{ x_2}^{m_2}$ and remark that
	\eq
		\E \big [\nu^{k,N}_{z,z'}(P) \big] & = & \esp \bigg[ \frac 1 {N-1} \sum_{j=1}^{N-1} \big( G_k(z)_{jj} \big)^{n_1}\big( G_k(\bar z)_{jj} \big)^{m_1} \big( {G'_k}(z')_{jj} \big)^{n_2} \big( {G'_k}(\bar z')_{jj} \big)^{m_2} \bigg]\\
		& = & \esp \bigg[ \frac 1 {N-1} \Tr \Big[ G_k(z)^{\circ n_1} \circ G_k(\bar z)^{\circ m_1} \circ G'_k(z')^{\circ n_2}\circ G'_k(\bar z')^{\circ m_2}   \Big] \bigg],
	\qe
where $\circ$ denotes the Hadamard (entry-wise) product of matrices and 
		$$ M^{\circ n}:= \underbrace{ M \circ \dots \circ M}_{n}.$$
We set $\ell_1 = n_1 + m_1$ and $\ell_2 = n_2+m_2$. Let $(Y_i,Y'_j)_{i=1\etc \ell_1, j=1\etc \ell_2}$ be a family of random variables such that for any $p_i, q_j\geq 0$,
	\begin{equation} \label{eq:HadamardTraceFormula}
		\esp \Big[ \prod_{i=1}^{\ell_1} Y_i^{p_i}  \prod_{j=1}^{\ell_2} {Y'_j}^{q_j} \Big]
		 =  \esp \bigg[ \frac 1 {N-1} \Tr \Big[ A_k^{p_1} \circ \dots \circ A_k^{p_{\ell_1} } \circ {A_k'}^{q_1} \circ \dots \circ {A_k'}^{q_{\ell_2}} \Big] \bigg]. 
	\end{equation}
Such a family exists by Proposition \ref{Prop:Existence}. By \cite[Proposition 3.10]{MAL122}, the couple of random matrices $(A,A')$ satisfies the so-called convergence in distribution of traffics,  so that the RHS converges. For the reader's convenience, we give the limiting value of \eqref{eq:HadamardTraceFormula}, even if we do not use it later. It is obtained by applying the rule of the so-called traffic freeness in \cite{MAL122}.
	\eqa \label{Eq:CalculFreeness}
		\esp \Big[ \prod_{i=1}^{\ell_1} Y_i^{p_i}  \prod_{j=1}^{\ell_2} {Y'_j}^{q_j} \Big] \limN \sum_{ \pi \in \mathcal P(V_T) } \tau^0[T^\pi] \alpha(T^\pi),
	\qea
where
\begin{enumerate}
	\item[1.] we have considered $T$ the graph whose edges are labelled by indeterminates $a$ and $a'$, obtained by
		\begin{itemize}
			\item considering the disjoint union of the graphs with vertices $1 \etc p_i$ and edges $\{1,2\} \etc \{p_i-1,p_i\}, \{p_i,1\}$ labelled $a$, $i=1\etc \ell_1$,
			\item considering the disjoint union of the graphs with vertices $1 \etc q_j$ and edges $\{1,2\} \etc \{q_j-1,q_j\}, \{q_j,1\}$ labelled $a'$, $j=1\etc \ell_2$,
			\item identifying the vertex $1$ of each of these graphs (we get a connected graph, bouquet of cycles),
		\end{itemize}
	\item[2.] we have denoted by $V_T$  the set of vertices of $T$, $\mathcal P(V_T)$ is the set of partitions of $V_T$ and $T^\pi$ denotes the graph obtained by identifying the vertices of $T$ that belong to a same block of $\pi$,
	\item[3.] the quantity $\tau^0[T^\pi]$ is as in Proposition \ref{Prop:CVDistrTraff},
	\item[4.] we have set
			$$\alpha(T^\pi) = \sum_{V_\pi = V_1 \sqcup V_2} \one_{ \left\{  \substack{
			    \textrm{    the edges which are not linking }         \\ 
			     \textrm{adjacent vertices of } V_1 \textrm{ have the same label}       }\right \}} u^{|V_1|} (1-u)^{ |V_2|},$$
		 where the sum is over all partitions of the set $V_\pi$ of vertices of $T^\pi$ and $|E_i|$ is the number of edges between adjacent vertices of $V_i$, $i=1,2$.
\end{enumerate}

Formula \eqref{Eq:CalculFreeness} could also be derived by the same techniques than those developed in Section \ref{Sec:CLTmoments}. The random variables $Y_i$ and $Y'_j$ are distributed according to the limiting eigenvalues distribution of $A$. Recall that we assume that the sequence $(C_k)_{k\geq 2}$ defined in \eqref{1971214h} satisfies $C_k \leq C^k$ for a constant $C>0$. Then, following the proof of \cite[Proposition 10]{ZAK}, the exponential power series of the limiting eigenvalues distribution of $A$ has a positive radius of convergence. So, by a generalization of \cite[Theorem 30.1]{Billingsley} to the multi-dimensional case, we get that the distribution of $(Y_i,Y'_j)$ is characterized by its moments. Then, we get that $(Y_i,Y'_j)$ converges weakly to a family of random variables $(y_i,y_j')$. We set $f_z(y) = (z-y)^{-1}$. We then obtain the convergence
	\eq
		\E \big [\nu^{k,N}_{z,z'}(P) \big]  & = & \esp \bigg[ \prod_{i=1}^{n_1} f_z(Y_i)   \prod_{j=1}^{n_2} f_{z'}(Y'_j) \bigg] \limN   \esp \bigg[ \prod_{i=1}^{n_1} f_z(y_i)   \prod_{j=1}^{n_2} f_{z'}(y'_j)  \bigg]
	\qe

\noindent Hence  $\E \big [\nu^{k,N}_{z,z'}(\;\cdot\;) \big]$ converges weakly.
\end{proof} 

This convergence could also be proven without Proposition \ref{Prop:Existence} but with appropriate bounds on the growth of moments.
We have the following Corollary.
\begin{cor}\label{corconv1}
\begin{enumerate}
\item 
For $z\in\mathbb C\backslash\mathbb R$, $t$ so that $\Im z t\ge 0$,
$$\rho^N_z(t):=\frac{1}{N-1}\sum_{i=1}^{N-1}  \Phi( tG_k(z)_{ii})\limN \rho_z(t):=\int \Phi(tx)\nu^0_{z,z'}(dx,dx')\qquad a.s.$$
\item For $z,z'\in\mathbb C\backslash\mathbb R$, $t, t'$ so that $\Lambda= \{ t \Im z \ge 0, t'\Im z'\ge 0, |t|+|t'| >0\}$
and $k/N$ going to $u$,
$$
\rho^{N,k}_{z,z'}(t,t'):=\frac{1}{N-1}\sum_{i=1}^{N-1}  \Phi( tG_k(z)_{ii}+t'G'_k(z')_{ii})$$
converges almost surely towards 
	\beqy \label{Eq:DefRhozz'}
		\rho^u_{z,z'}(t, t')=\int \Phi(tx+t'x')\nu^u_{z,z'}(dx,dx')\,.
	\eeqy
\item For any $(t,t')\in\mathbb R$, the functions $\rho^{N, k}(t,t')$ are analytic on $\Lambda=\{z,z': t  \Im z >0,  t'\Im z'>0\}$ and uniformly bounded on compact subsets of $\bar\Lambda$. They have a non positive real part.
Their limits are also analytic on $\Lambda$ and have a non positive real part. 
\item For all $z\in\mathbb C^+$, 
$$\lim_{N\ra\infty}\frac{1}{N}\Tr((z-A)^{-1}) = i\int_0^\infty e^{itz +\rho_z(t)} dt\qquad a.s.$$
\end{enumerate}
\end{cor}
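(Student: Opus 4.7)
My plan for Parts~1--3 is to reduce to Lemma~\ref{Lem:CVparlesTraffics} via general properties of~$\Phi$. First I will note that $\Phi$ is the uniform-on-compacts limit of the continuous maps $\lam\mapsto N(\phi_N(\lam)-1)$, hence continuous on $\ovl{\C^-}$ and analytic on $\C^-$. Second, whenever $t\Im z\ge 0$ and $t'\Im z'\ge 0$, I will observe that $\Im(tG_k(z)_{jj}+t'G'_k(z')_{jj})\le 0$, because $\Im G_k(z)_{jj}$ has the sign opposite to $\Im z$. This makes $(x,x')\mapsto\Phi(tx+t'x')$ a bounded continuous function on the closed disc of radius $|\Im z|^{-1}\vee|\Im z'|^{-1}$ containing the support of the measures $\nu^{k,N}_{z,z'}$; Lemma~\ref{Lem:CVparlesTraffics} then gives the almost sure convergence $\rho^{N,k}_{z,z'}(t,t')\to\rho^u_{z,z'}(t,t')$. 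Part~1 will follow by specializing $t'=0$.

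For Part~3, I will use that on the strict domain $\Lambda$ the arguments $tG_k(z)_{jj}+t'G'_k(z')_{jj}$ lie in $\C^-$ where $\Phi$ is analytic, and that $(z,z')\mapsto G_k(z)_{jj}$ and $(z,z')\mapsto G'_k(z')_{jj}$ are themselves analytic, so $\rho^{N,k}_{z,z'}$ and its limit are analytic on~$\Lambda$. Uniform boundedness on compact subsets of~$\ovl\Lambda$ comes from $|G_k(z)_{jj}|\le |\Im z|^{-1}$ and the local boundedness of~$\Phi$. Non-positivity of the real part will be inherited from the prelimit: since $|e^{-i\lam a^2}|=e^{\Im\lam\cdot a^2}\le 1$ whenever $\Im\lam\le 0$, one has $|\phi_N(\lam)|\le 1$, hence $\Re(N(\phi_N(\lam)-1))\le 0$; passing to the limit yields $\Re\Phi\le 0$ on $\ovl{\C^-}$, and this transfers to $\rho^{N,k}_{z,z'}$ by averaging and survives the limit.

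The substantive part is~4. I will fix $z\in\C^+$ (the case $z\in\C^-$ being symmetric). First I will reduce the almost sure convergence of $N^{-1}\Tr G(z)$ to the convergence of $\esp[G(z)_{11}]$ by the standard concentration estimate obtained from the martingale decomposition of Section~\ref{sec:Step1Levy} and Azuma--Hoeffding, exactly as in the proof of~\eqref{epslimit}. The Schur complement then gives
\[
 G(z)_{11}=\frac{1}{z-a_{11}-\mathbf a_1^*G_1(z)\mathbf a_1},
\]
and I will invoke Lemma~\ref{vanishing_nondiagterms_10713} together with the negligibility of $a_{11}$ to replace this by $\big(z-\sum_j a_{1j}^2 G_1(z)_{jj}\big)^{-1}$ up to an error vanishing in probability. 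Since $\Im\big(z-\sum_j a_{1j}^2 G_1(z)_{jj}\big)>0$ almost surely, the identity $(z-X)^{-1}=-i\int_0^\infty e^{it(z-X)}\ud t$ applies. Taking conditional expectation with respect to $\mathbf a_1$ and using independence of its entries,
\[
 \esp_{\mathbf a_1}\Big[e^{-it\sum_j a_{1j}^2 G_1(z)_{jj}}\Big]=\prod_{j=1}^{N-1}\phi_N\big(tG_1(z)_{jj}\big)=\exp\Big(\textstyle\sum_{j=1}^{N-1}\log\phi_N(tG_1(z)_{jj})\Big).
\]
By Hypothesis~\ref{Hyp:Model}, $\log\phi_N(\lam)=\Phi(\lam)/N+o(1/N)$ uniformly on compacts of $\ovl{\C^-}$, so by Part~1 the exponent converges almost surely to~$\rho_z(t)$. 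Dominated convergence, with the envelope $e^{-t\Im z}$ coming from $|e^{itz}|$ and the bound $|e^{\rho_z(t)}|\le 1$, will then deliver the formula~\eqref{Gf}.

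The main obstacle will be quantifying the vanishing of the off-diagonal quadratic form $\sum_{i\ne j}a_{1i}a_{1j}G_1(z)_{ij}$ uniformly enough in~$t$ that it remains negligible when inserted inside the oscillatory integral $\int_0^\infty e^{it(\cdots)}\ud t$. My plan is to truncate to $t\in[0,T]$ (the tail being controlled by $e^{-t\Im z}$), apply Lemma~\ref{vanishing_nondiagterms_10713} on $[0,T]$ with an error made uniform in~$t$ thanks to Part~3's bounds, and then let $T\to\infty$. A secondary point will be exchanging the almost-sure pointwise-in-$t$ convergence from Part~1 with the integral in~$t$; Part~3's uniform bounds combined with Fubini handle this.
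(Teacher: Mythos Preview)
Your proposal is correct and follows essentially the same route as the paper: Parts~1--3 via Lemma~\ref{Lem:CVparlesTraffics} and the continuity/analyticity of~$\Phi$, Part~4 via concentration (the paper cites Lemma~\ref{le:concspec} directly rather than redoing the martingale argument), exchangeability, Schur complement, off-diagonal removal, and the integral representation of the reciprocal.

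Two small remarks. For Part~3, pointwise almost-sure convergence of a family of analytic functions does not by itself give analyticity of the limit; the paper invokes Montel's theorem here, using the uniform boundedness you already noted, and you should too. For Part~4, your ``main obstacle'' is a phantom: since you remove the off-diagonal terms at the level of the reciprocal \emph{before} writing $(z-X)^{-1}=-i\int_0^\infty e^{it(z-X)}\ud t$, the error never appears inside the integral multiplied by~$t$. It is enough that $(z-a_{11}-\ba_1^*G_1(z)\ba_1)^{-1}-(z-\sum_j \ba_1(j)^2 G_1(z)_{jj})^{-1}\to 0$ in probability with the uniform bound $|\Im z|^{-1}$ (from~\eqref{297123}), hence in~$L^1$ by bounded convergence; this is exactly what the paper does, and no uniformity in~$t$ is required at that step.
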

\bpr Let us first notice that  $\rho^N_z(t)=\rho^{N,k}_{z,z'}(t,0)$ for all $k$ so we only need to focus on
$\rho^{N,k}_{z,z'}(t,t')$. The point wise convergence of the function $\rho^{N,k}$ is a direct consequence of the  continuity of $\Phi$ (recall we assumed that $\Phi$ extends continuously to
the real line), of the boundedness of $ tG_k(z)_{ii}+t'G'_k(z')_{ii}$  (by $t/\Im z+ t'/\Im z'$) and Lemma \ref{Lem:CVparlesTraffics}.
To show analyticity, note that 
for all $j\in \{1,\ldots,N\}$,
$G(z)_{jj}$ is an analytic function on $\C^+$, taking its values in $\C^-$ (and vice versa)  almost surely by \eqre{297122}. Hence, on $\Lambda$, $ tG_k(z)_{ii}+t'G'_k(z')_{ii}$ is an analytic function
with values in $\C^-$. 
Therefore, as $\Phi$ is analytic on $\C^{-}$,  $\rho^{N,k}$ is an analytic function  on $\Lambda$ almost surely. Moreover, as $\Phi$ extends continuously to the real line, it is uniformly 
bounded on compact subsets of $\ovl{\C^-}$ and hence $\rho^{N,k}$ is uniformly bounded on compact subsets of $\Lambda$. This implies by Montel's theorem that 
the limit $\rho^u$ of $\rho^{N,k}$ is also analytic. Finally, $\rho^{N,k}$ as non positive real part as the image of $\C^-$ by $\Phi$. Indeed, as $\Re(-i\lam| a_{11}|^2)\le 0$,  we have
 \be\la{AD5111200:28}|\phi_N(\lam)| =|\E(e^{-i\lam |a_{11}|^2})|\le 1\quad\Rightarrow \Re\Phi(\lambda)=\lim_{N\ra\infty}
N\log |\phi_N(\lambda)|\le 0.\ee  
For the last point, first note that  by Lemma \re{le:concspec}, it is enough to prove the result for $\E[N^{-1}\Tr G(z)]$ instead of  $N^{-1}\Tr G(z)$. Second, by exchangeability, $\E[N^{-1}\Tr G(z)]=\E[G(z)_{11}]$. Remind that $\ba_k$ is the $k$-th column of $A$ (or $A'$) where the $k$-th entry has been removed. Using Schur complement formula and getting rid of the off diagonal terms (by arguments similar to those of Section \ref{sec:Step2Levy}, \ie essentially Lemma \re{vanishing_nondiagterms_10713}), we have for any $z\in\bC^+$
	\beq
		\E[ G(z)_{11}] &=& \mathbb E\Big[ \frac{1}{z-\sum_{j=1}^{N-1} |\ba_{1}(j)|^2 G_1(z)_{jj} +\varepsilon_N}\Big]\\
		&= &  \mathbb E\Big[ \frac{1}{z-\sum_{j=1}^{N-1} \ba_{1}(j)^2 G_1(z)_{jj} }\Big] +o(1).
	\eeq
Remember that we have for $\lambda \in \C \bck\R$,  
	\be\la{rep}
		\ff{\lam}=-i\int_{0}^{\op{sgn}_\lam\infty}e^{it\lam}\ud t,
	\ee
and that $\phi_N(\lambda) = \E \big[ \exp( -i \lambda |a_{11}|^2) \big]$, which gives
	\beq
		\mathbb E[ G(z)_{11}] &=& i \int_0^\infty \mathbb E[ e^{i\lambda(z-\sum_{j=1}^{N-1} \ba_{1}(j)^2 G_1(z)_{jj})}]\ud\lambda+o(1)\\
&=&i\int_0^\infty e^{i\lambda z}\mathbb E\Big[ \prod_{j=1}^{N-1} \phi_N(\lambda G_1(z)_{jj}) \Big]\ud\lambda+o(1).
	\eeq
We used the exponential decay to switch the integral and the expectation. This also allows to truncate the integral: for any $ M\geq 0$,
		$$\int_0^\infty e^{i\lambda z}\mathbb E\Big[ \prod_{j=1}^{N-1} \phi_N(\lambda G_1(z)_{jj}) \Big]\ud\lambda = \int_0^M e^{i\lambda z}\mathbb E\Big[ \prod_{j=1}^{N-1} \phi_N(\lambda G_1(z)_{jj}) \Big]\ud\lambda +\epsilon_{m,M,z,N},$$
where $\epsilon_{M,z,N}$ goes to zero as $M$ tends to infinity, uniformly on $N$ and on the randomness.
Remember  that by assumption \eqref{2071216h33}, we have 
	\beqy\label{eq:CvPhi}
		 N\big(\phi_N(\lambda) -1\big) \limN \Phi(\lambda), \ \forall \lambda \in \ovl{\C^-}
	\eeqy
	where the convergence is uniform on compact subsets of $\ovl{\C^-}$. Hence, since for $|\Im z|\ge \delta>0$, $|t|\le M$,
	$ t G_k(z)_{jj}$ belongs to the compact set $ \{\lambda\in\ovl{\C^-}\ste |\lambda|\le M/\delta\}$, we have 
	$$\prod_{j=1}^{N-1}\phi_N(tG_k(z)_{jj}) = \exp\Big(\frac{1}{N} \sum_{j=1}^{N-1}\Phi( t G_k(z)_{jj}) \Big) + \varepsilon^{(1)}_{t,z,N} =\exp(\rho^N_z(t) ) +  \varepsilon^{(1)}_{t,z,N} $$
where $\varepsilon^{(1)}_{t,z,N}$ converges almost surely to zero as $N$ goes to infinity. 
By Corollary \ref{corconv1}, 
	$$\exp\Big(\frac{1}{N} \sum_{j=1}^{N-1}\Phi( t G_k(z)_{jj}) \Big)  =\exp\big( \rho_z(t) \big) + \varepsilon^{(2)}_{t,z,N},$$
where $ \varepsilon^{(2)}_{t,z,N}$ converges to zero almost surely. Hence, we deduce  the almost sure convergence 
	\beqy\label{eq:CVversRho}
		   e^{itz} \prod_{j=1}^{N-1}\phi_N(tG_k(z)_{jj})  \limN e^{itz + \rho_z(t)}.
	\eeqy
	As $\rho_z^N$ has non positive real part, we  conclude by dominated convergence theorem and by getting rid of the truncation of the integral.
\epr

\subsection{Computation and convergence  of $\E_{\ba_k} f_k$ and $\E_{\ba_k}f_k'$}\label{sec:Step4Levy}

\begin{lem}\label{Lem:CVdeL} Almost surely, we have the convergence
\be
	\E_{\ba_k} f_k \limN L(z):=
 -  \int_0^{\op{sgn}_z \infty} \frac {1} t \partial_z e^{  itz+\rho_z(t)}\ud t\,.
 	\ee
where $\op{sgn}_z:=\op{sgn}(\Im z )$ and
	$ \rho_z(t)$ is defined in Corollary \ref{corconv1}.
	\end{lem}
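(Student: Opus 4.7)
The plan is to rewrite $f_k$ as an integral over $t$ via the elementary formula \eqref{rep}, whose expectation over $\ba_k$ factorizes thanks to independence, and then identify the pointwise limit using $N(\phi_N-1)\to \Phi$ together with the almost sure convergence $\rho_z^N(t)\to \rho_z(t)$ provided by Corollary \ref{corconv1}.

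First I would set $W(z):=\sum_{j=1}^{N-1}\ba_k(j)^2 G_k(z)_{jj}$ and $D(z):=z-W(z)$. The identity $\partial_z G_k(z)_{jj}=-(G_k(z)^2)_{jj}$ gives $1+\sum_j \ba_k(j)^2 (G_k(z)^2)_{jj}=\partial_z D(z)$, hence $f_k=\partial_z D/D$. Since the entries of $\ba_k$ are real and $\Im G_k(z)_{jj}$ has the sign of $-\Im z$, $\Im D(z)$ has the same sign as $\Im z$, and formula \eqref{rep} applies: $1/D=-i\int_0^{\op{sgn}_z \infty}e^{itD}\,\ud t$. Combined with $(\partial_z D)\,e^{itD}=(it)^{-1}\partial_z e^{itD}$, this yields
\[
f_k\;=\;-\int_0^{\op{sgn}_z \infty}\frac{1}{t}\,\partial_z e^{it(z-W(z))}\,\ud t.
\]
Exchanging $\E_{\ba_k}$ with $\partial_z$ and $\int\ud t$ (by Fubini together with a small Cauchy contour in $z$ that extracts the derivative, using the uniform bound $|e^{itD}|\le e^{-t\Im z}$), and exploiting the independence of the entries of $\ba_k$ together with $\phi_N(\lambda)=\E[e^{-i\lambda a^2}]$, one obtains
\[
\E_{\ba_k} f_k\;=\;-\int_0^{\op{sgn}_z \infty}\frac{1}{t}\,\partial_z\Big[e^{itz}\prod_{j=1}^{N-1}\phi_N\big(t\,G_k(z)_{jj}\big)\Big]\ud t.
\]

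The next step is the pointwise convergence of the integrand. Since $tG_k(z)_{jj}$ lies in a compact subset of $\ovl{\C^-}$, the uniform convergence $N(\phi_N(\lambda)-1)\to \Phi(\lambda)$ from \eqref{2071216h33} yields $\log\phi_N(\lambda)=\Phi(\lambda)/N+O(1/N^{2})$; summing over $j$ and applying Corollary \ref{corconv1} gives
\[
e^{itz}\prod_{j=1}^{N-1}\phi_N\big(tG_k(z)_{jj}\big)\;\longrightarrow\;e^{itz+\rho_z(t)}\qquad\text{almost surely,}
\]
uniformly on compact subsets of a small complex disc around $z$ inside $\C\setminus\R$; Cauchy's formula then transfers this convergence to the $\partial_z$-derivative.

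The main technical obstacle is securing a dominating function for the $t$-integral, uniform in $N$. Write $h_N(z,t):=e^{itz}\prod_{j=1}^{N-1}\phi_N(tG_k(z)_{jj})$. For $|t|$ large, the bound $|\phi_N(\lambda)|\le 1$ on $\ovl{\C^-}$ combined with $|e^{itz}|=e^{-t\Im z}$ gives $|h_N(z,t)|\le e^{-|t\Im z|}$, and a Cauchy estimate in $z$ carries this (up to polynomial factors in $t$ absorbed by the exponential) to $|\partial_z h_N|$. For $|t|$ near $0$ the key observation is that $h_N(z,0)=1$; using $\phi_N(\lambda)-1=\Phi(\lambda)/N+O(1/N^{2})$ and the continuity of $\Phi$ at $0$ (with small-$\lambda$ behaviour quantified, e.g.\ $|\Phi(\lambda)|\le C|\lambda|^{\alpha/2}$ in the L\'evy case and $|\Phi(\lambda)|\le C|\lambda|$ for exploding-moments matrices), one shows $|h_N(z,t)-1|\le C|t|^{\beta}$ for some $\beta>0$, so a Cauchy estimate in $z$ gives $|\partial_z h_N(z,t)/t|\le C|t|^{\beta-1}$, which is integrable near $0$. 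Dominated convergence then produces
\[
\E_{\ba_k} f_k\;\longrightarrow\;-\int_0^{\op{sgn}_z \infty}\frac{1}{t}\,\partial_z e^{itz+\rho_z(t)}\,\ud t\;=\;L(z),
\]
as claimed.
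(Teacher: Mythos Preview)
Your overall strategy is the same as the paper's: write $f_k=\partial_z D/D$ with $D=z-W$, use the integral representation \eqref{rep}, take $\E_{\ba_k}$ to factor into $h_N(z,t)=e^{itz}\prod_j\phi_N(tG_k(z)_{jj})$, and then pass to the limit using $N(\phi_N-1)\to\Phi$ and Corollary~\ref{corconv1}. The passage from pointwise convergence of $h_N$ to convergence of $\partial_z h_N$ via a Cauchy contour and Montel-type reasoning is also what the paper does.

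The gap is at the small-$t$ end. Your claimed exchange of $\E_{\ba_k}$ with $\int_0^{\op{sgn}_z\infty}\ud t$ ``by Fubini, using $|e^{itD}|\le e^{-t\Im z}$'' does not go through near $t=0$: extracting $\partial_z$ by Cauchy on a small circle only gives $|(1/t)\partial_z e^{itD}|\le C\,t^{-1}e^{-t\delta}$, which is not integrable at $0$; and the direct form $(1/t)\partial_z e^{itD}=i(\partial_z D)e^{itD}$ has $|\partial_z D|\le 1+|\Im z|^{-2}\sum_j\ba_k(j)^2$, whose $\E_{\ba_k}$-expectation is infinite in the L\'evy case ($\alpha<2$). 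So Fubini genuinely fails on $(0,m]$. The paper deals with this \emph{before} taking the expectation: it splits $f_k=\tilde f_{k,m}+\eta_m$ with $\eta_m=(\partial_z D/D)(1-e^{i\,\op{sgn}_z m\,D})$ and shows $\E_{\ba_k}|\eta_m|\le\eps$ for $m$ small \emph{uniformly in $N$}, using the tightness hypothesis~\eqref{tyu} on $\sum_j\ba_k(j)^2$. This is the ingredient you are missing.

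Your alternative route---dominate $(1/t)\partial_z h_N$ near $t=0$ via $|h_N(z,t)-1|\le C|t|^\beta$---could in principle replace the use of~\eqref{tyu}, but the justification you give is not quite right. You write $\phi_N(\lambda)-1=\Phi(\lambda)/N+O(1/N^{2})$; Hypothesis~\ref{Hyp:Model} only guarantees $o(1/N)$ uniformly on compacts, with no rate at $\lambda=0$. What you would actually need is a bound of the form $|\phi_N(\lambda)-1|\le C|\lambda|^\beta/N$ uniform in $N$ (so that $|\prod_j\phi_N(tG_{jj})-1|\le C|t|^\beta$). This is provable model by model (e.g.\ via $|e^{-i\lambda a^2}-1|\le\min(|\lambda|a^2,2)$ and the tail of $a$), but it is an extra input beyond what you invoke. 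The paper avoids this by using~\eqref{tyu} instead; at the very end it only needs the small-$t$ integrability of the \emph{limit} $(1/t)\partial_z e^{itz+\rho_z(t)}$, which it checks directly from $\rho_z(t)=t^{\alpha/2}\rho_z(1)$ (L\'evy) or a uniform bound (exploding moments).
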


\begin{proof}Remember that for any $z\in \C\bck\R$, 
	$$f_k=\f{1+\sum_j\ba_k(j)^2(G_k(z)^2)_{jj}}{z-\sum_j\ba_k(j)^2G_k(z)_{jj} }:= \frac{ \partial_z \lambda_N(z)}{\lambda_N(z)}.$$
Hence, by \eqref{rep}, since by \eqref{297122} the sign of the imaginary part of $\lambda_N(z)$ is $\op{sgn}_z$, the random variable $f_k$ can be written 
	\beqy\la{88121}
		f_k & = & -i \int_0^{\op{sgn}_z\infty}\partial_z \lambda_N(z) e^{it\lambda_N(z)}\ud t.
		\\
			& =&    -i \int_{\op{sgn}_z  m}^{\op{sgn}_z\infty}\partial_z \lambda_N(z) e^{it\lambda_N(z)} \ud t    -i \int_{0}^{\op{sgn}_zm} \partial_z \lambda_N(z) e^{it\lambda_N(z)} \ud t \nonumber \\
			& := &  \tilde f_{k,m} + \eta_m(z),\nonumber
	\eeqy
where $m>0$ and $\eta_m(z) = \frac{ \partial_z \lambda_N(z)}{ \lambda_N(z)} \big( 1- e^{i \op{sgn}_z m \lambda_N(z)} \big)$.

We next show that for all $\varepsilon>0$ there exists $m_0$ so that for  $m<m_0$,  $N$ large enough
\begin{equation}\label{cont}
\mathbb E_{\bf a_k}[|\eta_m(z)|]\le \varepsilon
\end{equation}
By \eqref{297125} and since the sign of the imaginary part of $\lambda_N(z)$ is $\op{sgn}_z$, one has $|\eta_m(z)|\leq 4 |\Im z|^{-1}$.  More precisely, for any $K>0$,
we find
$$|\eta_m(z)|\le \frac{4}{\Im z}(mK|\Im z|^{-1}+\one_{\sum_j a_k(j)^2 \ge K})\,.$$
By \eqref{tyu}, we deduce that for any  $\eps>0$ we can choose $m$ small enough so that 
for $N$ large enough
\begin{equation}\label{gamma}\mathbb E_{\bf a_k}[|\eta_m(z)|]\le C \eps\,.\end{equation}
In the following we shall therefore neglect the term $\eta_m(z)$.
By \eqref{297122} and \eqre{297124} we have the following bound: for any $t \neq 0$ and any $z \in \mbb C\setminus \mbb R$ such that $|\Im z|\geq \delta >0$, 
 	\beqy
\label{eq:Domin}	 \Big|\sum_{j=1}^{N-1}\ba_k(j)^2(G_k(z)^2)_{jj} e^{-it \sum \ba_k(j)^2G_k(z)_{jj} }\Big| & \le &  \frac{1}{\delta} \sup_{x\ge 0} x e^{-tx} = \frac {e^{-1}} { t \delta }
	\eeqy
 so that for $M$ large enough 
 and $|\Im z |\geq \delta >0$,
 	\beq
		\E_{\ba_k} \tilde f_{k,m} & = & -i \int_{\op{sgn}_z m}^{\op{sgn}_z M }  \E_{\ba_k} \big[  \partial_z \lambda_N(z) e^{it\lambda_N(z)} \big]\ud t + \varepsilon_{m,M,z,N}
	\eeq
where $\varepsilon_{m,M,z,N}$ is arbitrary small as $M$ is large, uniformly on $N$ and on the randomness. Moreover, by \eqref{eq:Domin} one has 
	\beqy \label{Eq:16052111}
		\E_{\ba_k} \big[ \partial_z \lambda_N(z)  e^{it \lambda_N(z)} \big] =  \frac 1 {it} \partial_z  \big( \E_{\ba_k} \big [  e^{it\lambda_N(z)} \big] \big)
	\eeqy
 Recall that  $\phi_N(\lambda) = \E \big[ \exp( -i \lambda |a_{11}|^2) \big]$, so we have
	\be\la{2071217h44}
		 \E_{\ba_k} \big [  e^{it\lambda_N(z)} \big] =  \E_{\ba_k} \Big [  e^{itz-it \sum_j\ba_k(j)^2G_k(z)_{jj})} \Big]  = e^{itz} \prod_{j=1}^{N-1}\phi_N(tG_k(z)_{jj}).
	\ee
	
	Remind that in \eqref{eq:CVversRho} we have shown in the proof of the last item of Corollary \ref{corconv1}  
	\beqy
		   e^{itz} \prod_{j=1}^{N-1}\phi_N(tG_k(z)_{jj})  \limN e^{itz + \rho_z(t)}\quad a.s.
	\eeqy
	As in the proof of Corollary \ref{corconv1}, since the left hand side is analytic and uniformly bounded, we deduce by Montel's theorem that its convergence entails the convergence of its derivatives.
	We then get by \eqref{2071217h44},  for all $t,z$ so that $t/\Im z>0$, the almost sure convergence
	$$ \partial_z  \bigg( \E_{\ba_k} \Big [  e^{itz-it \sum_j\ba_k(j)^2G_k(z)_{jj})} \Big] \bigg) \limN \partial_z  \Big( e^{itz + \rho_z(t)} \Big).$$
We then obtain by dominated convergence (remember the integrant is uniformly bounded  by \eqref{Eq:16052111} and \eqref{eq:Domin}), 
	\beq
		\E_{\ba_k} \tilde f_{k,m}= 
 -  \int_{\op{sgn}_zm}^{\op{sgn}_zM} \frac {1} t \partial_z e^{  itz+\rho_z(t)}\ud t  + \tilde \varepsilon_{m,M,z,N},
	\eeq 
where $ \tilde \varepsilon_{m,M,z,N}$ converges to zero almost surely as $N$ goes to infinity.
\\
\\
\\
By \eqref{Eq:16052111} and estimate \eqref{eq:Domin}, we have the estimate
	$$ \Big | \frac 1 t  \partial_z e^{  itz+\rho_z(t)} \Big| \leq  \Big( 1 + \frac{e^{-1}}{t \Im z}\Big) e^{- t \Im z }$$
so we can let $M$ going to infinity to obtain the almost sure convergence
	\beq
	\E_{\ba_k} \tilde f_{m,k} \limN 
 -  \int_{\op{sgn}_z m}^{\op{sgn}_z \infty} \frac {1} t \partial_z e^{  itz+\rho_z(t)}\ud t.
 	\eeq
In the L\'evy case, one has $\rho_z(t) = t^{\frac \alpha  2}\rho_z(1)$, and in the exploding moments case, $\big| \frac 1 t \partial _z e^{itz+\rho_z(t)}\big| \leq 1 + \frac{C_2}{\Im z^2}$, so that the integral converges at zero and we obtain
	$$\E_{\ba_k} f_k \limN -  \int_0^{\op{sgn}_z \infty} \frac {1} t \partial_z e^{  itz+\rho_z(t)}\ud t\,.$$
\end{proof}

Of course, this convergence is uniform in $k$ since the law of $G_k(z)$ does not depend on $k$ and an analogous formula is true for  $\E_{\ba_k} f'_k$, replacing $z$ by $z'$.

\subsection{Computation    of $\E_{\ba_k} (f_k\ti f_k'')$}

\begin{lem}\label{Lem:CVdePsi} Almost surely, we have the convergence
\be
	\E_{\ba_k} f_k f_k'' \limN \Psi^u(z,z'):=
 \int_{0}^{\op{sgn}_z \infty}\int_{0}^{\op{sgn}_{z'}\infty}  \frac 1{tt'} \partial^2_{z,z'} e^{itz + it'z' + \rho_{z,z'}^{u}(t,t')} \trm dt \trm dt',
 	\ee
where $ \rho^u_{z,z'}$ is defined in Corollary \ref{corconv1}.\end{lem}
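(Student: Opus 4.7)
My plan is to adapt the strategy of the proof of Lemma~\ref{Lem:CVdeL} (for $\E_{\ba_k} f_k$) to the product $f_k \cdot f_k''$, exploiting the crucial fact that $A$ and the auxiliary matrix $A'$ defined in \eqref{88123} \emph{share the same $k$-th column} $\ba_k$. As a consequence $f_k$ and $f_k''$ are \emph{not} independent under $\E_{\ba_k}$, and this is exactly what will yield a correlation term governed by $\rho^{u}_{z,z'}(t,t')$ rather than by $\rho_z(t)\rho_{z'}(t')$.

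Step 1: Integral representation. Set
\begin{equation*}
\lambda_N(z)=z-\sum_{j=1}^{N-1}\ba_k(j)^2 G_k(z)_{jj}, \qquad \lambda_N'(z')=z'-\sum_{j=1}^{N-1}\ba_k(j)^2 G'_k(z')_{jj},
\end{equation*}
so that $f_k=\partial_z\lambda_N(z)/\lambda_N(z)$ and $f_k''=\partial_{z'}\lambda_N'(z')/\lambda_N'(z')$. By \eqref{297122}, $\Im\lambda_N(z)$ and $\Im\lambda_N'(z')$ have signs $\op{sgn}_z$ and $\op{sgn}_{z'}$ respectively, so applying \eqref{rep} to each factor gives
\begin{equation*}
f_k f_k'' = -\int_0^{\op{sgn}_z\infty}\!\!\int_0^{\op{sgn}_{z'}\infty}\partial_z\lambda_N(z)\,\partial_{z'}\lambda_N'(z')\,e^{it\lambda_N(z)+it'\lambda_N'(z')}\,dt\,dt'.
\end{equation*}

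Step 2: The key computation under $\E_{\ba_k}$. Since the $j$-th entries of $\ba_k$ are i.i.d.\ copies of $a$ and independent of both $G_k(z)$ and $G_k'(z')$, and since $\ba_k$ is common to $\lambda_N(z)$ and $\lambda_N'(z')$,
\begin{equation*}
\E_{\ba_k}\bigl[e^{it\lambda_N(z)+it'\lambda_N'(z')}\bigr] = e^{itz+it'z'}\prod_{j=1}^{N-1}\phi_N\bigl(tG_k(z)_{jj}+t'G_k'(z')_{jj}\bigr).
\end{equation*}
Hypothesis \eqref{2071216h33} together with Corollary~\ref{corconv1}(2) applied to $\rho^{N,k}_{z,z'}(t,t')$ gives the almost sure pointwise convergence, for $t\Im z\ge 0$ and $t'\Im z'\ge 0$,
\begin{equation*}
\prod_{j=1}^{N-1}\phi_N\bigl(tG_k(z)_{jj}+t'G_k'(z')_{jj}\bigr) \xrightarrow[N\to\infty]{} \exp\bigl(\rho^{u}_{z,z'}(t,t')\bigr).
\end{equation*}

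Step 3: Reducing to derivatives of the characteristic function. Exactly as in \eqref{Eq:16052111}, the identity
\begin{equation*}
\partial_z\lambda_N(z)\,\partial_{z'}\lambda_N'(z')\,e^{it\lambda_N+it'\lambda_N'} = \frac{1}{(it)(it')}\,\partial^2_{z,z'}\bigl(e^{it\lambda_N(z)+it'\lambda_N'(z')}\bigr)
\end{equation*}
and Fubini reduce matters to the convergence of $\partial^2_{z,z'}$ applied to the conditional characteristic function above. Since for $t\Im z>0$ and $t'\Im z'>0$ these quantities are analytic in $(z,z')\in\Lambda$ and uniformly bounded in $N$ (their modulus is at most $e^{-t\Im z - t'\Im z'}$), Montel's theorem upgrades the pointwise convergence of Step~2 to convergence of the $\partial^2_{z,z'}$ derivatives.

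Step 4: Dominated convergence and truncation. To exchange the limit with the double integral, I would first truncate as in Lemma~\ref{Lem:CVdeL}: on $\{|t|\ge m,\ |t'|\ge m'\}\cap\{|t|\le M,\ |t'|\le M'\}$ the integrand is bounded by the analogue of \eqref{eq:Domin} for each variable, giving uniform integrability; the large $t,t'$ tails are controlled by the exponential decay $e^{-t\Im z - t'\Im z'}$ combined with the $1/(tt')$ factor; and the small $t,t'$ pieces are handled through \eqref{tyu} exactly as the term $\eta_m(z)$ is handled in the proof of Lemma~\ref{Lem:CVdeL}, using $|1-e^{itm\lambda_N}|\le |tm\lambda_N|\wedge 2$. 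Letting $m,m'\to 0$ and $M,M'\to\infty$ produces the claimed limit.

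The main obstacle I anticipate is the uniform control of the small-$t,t'$ contribution: unlike the single-variable case where one integrates against $\partial_z e^{itz+\rho_z(t)}/t$ with a single singularity, here the $1/(tt')$ factor multiplies a quantity whose behaviour near the origin is dictated by the regularity of $\Phi$ at $0$ (encoded in Hypotheses \ref{Hyp:ModelLGI} and \ref{Hyp:ModelLGI2} through the $\ga>-1$ condition in the examples). Justifying that the double integral is absolutely convergent near the corner $(0,0)$, and that the truncation error is $o(1)$ uniformly in $N$, is where the argument is most delicate; once this is established, Steps~1--3 give the limit mechanically.
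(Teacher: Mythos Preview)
Your proposal is correct and follows essentially the same route as the paper: integral representation of $f_k f_k''$ via \eqref{rep}, computation of $\E_{\ba_k}[e^{it\lambda_N(z)+it'\lambda_N'(z')}]$ as a product of $\phi_N$'s, convergence to $e^{itz+it'z'+\rho^u_{z,z'}(t,t')}$ via Corollary~\ref{corconv1}, Montel's theorem for the $\partial^2_{z,z'}$ derivatives, and truncation plus dominated convergence.

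One comment on the obstacle you flag: the $(0,0)$ corner is less delicate than you suggest, and in particular does not require Hypotheses~\ref{Hyp:ModelLGI} or~\ref{Hyp:ModelLGI2} (those enter only in Section~\ref{sec:fixedpoint}). Since $f_k$, $f_k''$, $\eta_m(z)$ and $\eta'_{m'}(z')$ are each deterministically bounded by $4|\Im z|^{-1}$ (resp.\ $4|\Im z'|^{-1}$) via \eqref{297125}, expanding $f_k f_k''=(\tilde f_{k,m}+\eta_m)(\tilde f''_{k,m'}+\eta'_{m'})$ yields cross terms whose $\E_{\ba_k}$-expectation is bounded by a constant times $\E_{\ba_k}[|\eta_m|]+\E_{\ba_k}[|\eta'_{m'}|]$, which is small by \eqref{gamma}. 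This controls the prelimit truncation at small $t,t'$ directly; the integrability of the \emph{limit} integrand near $(0,0)$ is then handled exactly as at the end of Lemma~\ref{Lem:CVdeL}.
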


\begin{proof}We shall  start again by using Formula \eqre{88121} for $f_k$, and its analogue for 
	$$f_k''=\f{1+\sum_j\ba_k(j)^2(G'_k(z')^2)_{jj}}{z-\sum_j\ba_k(j)^2G'_k(z')_{jj} }:= \frac{ \partial_z \lambda'_N(z')}{\lambda'_N(z')},$$
 where $G_k'(z')$ is defined as $G_k(z)$, replacing $z$ by $z'$ and the matrix $A$ by the matrix  $A'$ defined by  \eqre{88123}, which gives
	\beqy\la{8.8.12.2}
		f''_k &= & -i \int_0^{\op{sgn}_{z'}\infty}  \partial_z \lambda'_N(z')e^{it' \lambda'_N(z')}\ud t'\\
		& =&    -i \int_{\op{sgn}_z'  m}^{\op{sgn}_z'\infty}\partial_z \lambda'_N(z') e^{it'\lambda'_N(z')} \ud t'   -i \int_{0}^{\op{sgn}_zm} \partial_z \lambda'_N(z') e^{it'\lambda'_N(z')} \ud t' \nonumber 
	\eeqy
	
	The upper bound \eqref{eq:Domin} allows us to bound the first term uniformly by  $\log m^{-1}$ 
	and to truncate the integrals for $\op{sgn}_z t , \op{sgn}_{z'} t' \leq M$. Therefore, 
	up to a small error $\varepsilon$ uniform for $M$ large, $m$ small and provided $|\Im z|, |\Im z'|\geq \delta>0$, we have  
	 	\beqy 
	\mathbb E_{\ba_k}\tilde f_k\ti \tilde f_k'' & = & -\int_{\op{sgn}_z  m}^{\op{sgn}_z M}\int_{\op{sgn}_z'  m}^{\op{sgn}_{z'}M} \mathbb E_{\ba_k}
	\partial_z \lambda_N(z)  \partial_z \lambda'_N(z')  e^{it   \lambda_N(z) + it'  \lambda'_N(z') }\ud t\ud t'  + \varepsilon
	\la{511122}
	 \eeqy
As in the previous case, the upper bound \eqref{eq:Domin} allows us  to write
	\beqy
		\E_{\ba_k} \bigg[ \partial_z \lambda_N(z)  \partial_z \lambda'_N(z')  e^{it   \lambda_N(z) + it'  \lambda'_N(z') } \bigg]  =  - \frac 1{tt'} \partial^2_{z,z'} \bigg(  \E_{\ba_k}\Big[ e^{it   \lambda_N(z) + it'  \lambda'_N(z') } \Big]\bigg)
	\eeqy
Remember that $\phi_N(\lambda) = \E \big[ \exp( -i \lambda |a_{11}|^2) \big]$, so we have
  	\beqy
		\E_{\ba_k} \Big[ e^{it   \lambda_N(z) + it'  \lambda'_N(z') } \Big] = e^{itz+it'z'} \prod_{j=1}^{N-1} \phi_N\big( tG_k(z)_{jj} + t' G'_k(z')_{jj} \big).
	\eeqy
By assumption \eqref{2071216h33} and Corollary  \ref{corconv1},
we have the following almost sure convergence  as $N$ goes to infinity and $\frac kN$ goes to $u$ in $(0,1)$
		$$e^{itz+it'z'} \prod_{j=1}^{N-1} \phi_N\big( tG_k(z)_{jj} + t' G'_k(z')_{jj} \big) \limN e^{itz + it'z' + \rho_{z,z'}^{u}(t,t')}.$$
Almost surely, for any $t,t'$, the map $(z,z') \mapsto e^{itz+it'z'} \prod_{j=1}^{N-1} \phi_N\big( tG_k(z)_{jj} + t' G'_k(z')_{jj} \big) $ is analytic on $\mbb C^{\op{sgn} t} \times \mbb C^{\op{sgn} t'}$ and bounded by one. Hence, with the same arguments as in the previous section, we get almost surely and for any $t,t'$ so that $t/\Im z $ and $t'/\Im z'$ are positive, the uniform convergence for the second derivatives on  compact subsets. The truncations of the integrals can be suppressed as in the previous section, we obtain the almost sure convergence
	\begin{equation*}\E_{\ba_k} f_k\ti f_k''  \limN  \int_{0}^{\op{sgn}_z \infty}\int_{0}^{\op{sgn}_{z'}\infty}  \frac 1{tt'} \partial^2_{z,z'} e^{itz + it'z' + \rho_{z,z'}^{u}(t,t')} \trm dt \trm dt'.
\end{equation*}
\end{proof}

Hence we have proved the convergences \eqre{1481214h45} and \eqre{1481214h45bis}. This completes the proof of Theorem \re{TCLHTRM}.

\section{Proof of Corollary \ref{genf}}\label{part:Corgenf}
Note first that by linearity of the map $f\mapsto Z_N(f)$, to prove the convergence in law of the process, it suffices to prove the convergence in law of $Z_N(f)$ for any fixed $f$. Then,  by Lemma \re{lem151113} and the concentration inequality \eqre{1511131} of the appendix, it suffices to prove the result for $f\in \Cc_c^2(\R)$. For $f\in \Cc_c^2(\R)$, we  use \cite[Eq. (5.5.11)]{alice-greg-ofer} to see that for any probability measure $\mu$
$$\int f(x)\ud \mu(x) =\Re\left(\int \ud x\int \ud y \bar\partial \Psi_f(x,y) \int\frac{1}{t-x-iy} \ud \mu(t) \right)$$
Applying this to the empirical measure of eigenvalues and its expectation, we deduce that
$$Z_N(f)=\Re\left(\int \ud x\int \ud y \bar\partial \Psi_f(x,y) Z_N(x+iy)\right)\,.$$
Note here that $\bar\partial \Psi_f(x,y)$ is supported in a compact set $[-c_0,c_0]\times [0,c_0']$  and is bounded by $c|y|$ for a finite constant $c$.
Hence, the integral is well converging.  We next show that we can approximate it by
$$Z_N^\Delta(f)=\Re\left(\int \ud x \int \ud y \bar\partial \Psi_f(x,y) Z_N(\Delta(x)+i\Delta(y))\right)\,,$$
where $\Delta(x)=\lceil2^k x\rceil2^{-k}$. The random variable $Z_N^\Delta(f)$ is only a finite sum of $Z_N(x+iy)$ and therefore it converges 
in law by Theorem \ref{TCLHTRM},  towards $$Z^\Delta(f)=\Re\left(\int \ud x \int \ud y \bar\partial \Psi_f(x,y) Z(\Delta(x)+i\Delta(y))\right)\,,$$
We finally show the convergence in probability of $Z_N^\Delta(f)-Z_N(f)$ and $Z^\Delta(f) - Z(f)$ to zero by bounding their $L^1$ norms 
$$\mathbb E[|Z_N^\Delta(f)-Z_N(f)|]\le \int_{-c_0}^{c_0}\ud x\int_0^{c_0'} \ud y c|y| \mathbb E[ |Z_N(\Delta(x)+i\Delta(y))-Z_N(x+iy)|]\,.$$
But, by Lemma \ref{le:concspec}, there exists a finite constant $C$ such that for any $\varepsilon>0$
\begin{eqnarray*}
\mathbb E[ |Z_N(\Delta(x)+i\Delta(y))-Z_N(x+iy)|]&\le& C\lf\| \frac{1}{.-x-iy}-\frac{1}{.-\Delta(x)-i\Delta(y)}\ri\|_\textsc{TV}\\
&\le& C\frac{1}{y}\left(\frac{|x-\Delta(x)|+|y-\Delta(y)|}{y}\right)^\varepsilon \,.\end{eqnarray*}
Taking $\varepsilon\in (0,1)$ we deduce that there exists a finite constant $C_\varepsilon$ such that
$$\mathbb E[|Z_N^\Delta(f)-Z_N(f)|]\le C_\varepsilon 2^{-\varepsilon k}\,.$$
Of course the same estimate holds for $\mathbb E[|Z^\Delta(f)-Z(f)|]$.
This implies the desired convergence in probability of $|Z_N^\Delta(f)-Z_N(f)|$ and $|Z^\Delta(f)-Z(f)|$ to zero, and therefore the desired convergence in law of $Z_N(f)$. This proves the corollary. 

\section{Fixed point characterizations}\la{sec:fixedpoint}

In this section, we provide characterizations of the functions $\rho_z$ and $\rho_{z,z'}^u$ involved in the covariance of the limiting process of Theorem \re{TCLHTRM} as fixed points of certain functions.  In fact, we also give an independent 
proof of the existence of such limits, and of Corollary \ref{corconv1}.

\subsection{Fixed point characterization of $\rho_z( \cdot)$: proof of Theorem \ref{Th:TixPtEqRhoz} }
We now prove the fixed point equation for the non random function involved in the Lemma \ref{Lem:CVdeL}, given for $z\in \C^+$ and $\lam>0$ by, 
		$$\rho_z(\lam)= \Nlim \rho_z^N(\lam) = \Nlim \ff{N}\sum_{j=1}^{N}\Phi(\lam G_k(z)_{jj}),$$
where we have proved that this convergence holds almost surely in Corollary \ref{corconv1}. Note however that under the assumptions of Theorem \ref{Th:TixPtEqRhoz}, the arguments below provide another proof of this convergence where we do not have to assume \eqref{CondMom}.
 
 We denote in short $A$ for $A_k$, $G$ for $G_k$ and $\ba$ for $\ba_k$ in the following, and we do not detail  the steps of the proof, which are very similar to those in  \cite{BAGheavytails} and Corollary \ref{corconv1}, but outline them.
 Since we have already seen that $\rho$ is analytic in Corollary \ref{corconv1} we need only to prove the fixed point equation. 
Let $A_1$ be the $N-2\ti N-2$ principal submatrix of $A$ obtained by removing the first row and the first column of $A$, and let $G_1(z):=(z-A_1)^{-1}$. Let $\ba_1$ be the first column of $A$ where the first entry has been removed.
Using first the concentration lemma \re{le:concres}, then exchangeability of the $G(z)_{jj}$'s, then Schur complement formula (see \cite[Lem. 2.4.6]{alice-greg-ofer}),
 and then the fact that we can get rid of the off diagonal terms
by the same argument as in the proof of Lemma \ref{20812} since $\Phi$ is continuous on $\ovl{\C^-}$ (namely via Lemma \re{vanishing_nondiagterms_10713}), we have for all $z\in\bC^+$
  \begin{eqnarray*}
  \mathbb E[\rho^N_z(\lambda)]&=&\E[\Phi(\lam G(z)_{11})]=\mathbb E\big[\Phi(\frac{\lambda}{z-a_{11}-{\bf a}_1^* G_1(z) {\bf a}_1})\big]\\
  &=& \mathbb E\Big[\Phi\Big(\frac{\lambda}{z-
  			 \sum_{j=1}^{N-2}   
			 \ba_{1}(j)^2 G_1(z)_{jj}}\Big)\Big]+o(1).\eeq
  Then we use Hypothesis made at  \eqref{hypcalcconv} to  get   for $\Im z>0$
  	 \beq 
	 	\E[\rho^N_z(\lambda)] 
		&=&\int_0^\infty g(y) \mathbb E[ e^{i\frac{y}{ \lambda} (z-\sum_{j} \ba_{1}(j)^2 G_1(z)_{jj})}
]\ud y+o(1)\\
		&=& \int_0^\infty g(y)e^{i\frac{y}{ \lambda} z } \mathbb E\Big[\prod_{j=1}^{N-1} \phi_N( \frac{y}{ \lambda} G_1(z)_{jj}) \Big] \ud y +o(1).
	\eeq
Using the definition of $\Phi$ and the fact that we assumed that it is bounded on every compact subset (since $\rho^N$ has non positive real part  we can cut the integral to keep $y$ bounded up to a small error, as in the previous sections), we have 
	 \beq
 		 \E[\rho^N_z(\lambda)] 
	 	&=& \int_0^\infty g(y)e^{i\frac{y}{ \lambda} z }  \E\Big[ e^{\frac{1}{N}\sum_j \Phi( \frac{y}{ \lambda} G_1(z)_{jj})}\Big] \ud y+o(1)\\
		&=&\lam\int_0^\infty g(\lam y) e^{i {y}  z }  \E \Big[ e^{\rho_z^{N-1}(y)} \Big]\ud y+o(1).
	\end{eqnarray*}
	Now, notice that by Lemmas \ref{le:prox_mat_ssmat_AD} and \ref{le:concspec},  $\rho_z^{N-1}$ can be replaced by $\mathbb E[\rho_z^{N}]$ in the above formula.
	Moreover, as $\Phi$ is uniformly continuous, so is $\mathbb E[\rho_z^{N}]$, so that we can take limit points  (or we can use
Corollary \ref{corconv1} up to assuming \eqref{CondMom}) and check that they satisfy \eqre{charrhoAD}.

   Let us now prove  that there is a unique  solution to this equation which is analytic in $\mathbb C^+$ and with non positive real part. Suppose that there are two such
    solutions $\rho_z(\lam), \tilde{\rho}_z(\lam)$.   For $z$ fixed, let us define $\Delta(\lam):=|\rho_z(\lam)-\tilde{\rho}_z(\lam)|$. Then for all $\lam$, we have, by the hypothesis made on $g$ in  \eqre{ConditionOng},  
  \beq
		\Del(\lam)& \le & \lam\int_0^\infty|g(\lam y)| e^{-y\Im z }\Del(y)\ud y\\
		& \le & K\bigg( \lam^{\ga+1}\int_0^{\infty} y^\ga e^{-y\Im z }\Del(y)\ud y +  \lam^{\ka+1}\int_{0}^\infty y^{\ka} e^{-y\Im z }\Del(y)\ud y \bigg)\,.
	\eeq
 
 It follows that $\ds I_1:=\int_0^\infty \lam^\ga e^{-\lam\Im z }\Del(\lam)\ud\lam$ and $\ds I_2:=\int_0^\infty \lam^{\ka} e^{-\lam\Im z }\Del(\lam)\ud\lam$ satisfy
\begin{eqnarray*}
 I_1 &\le&  K\bigg( I_1 \int_{0}^\infty \lam^{2\ga+1}e^{-\lam \Im z}\ud \lam + I_2 \int_{0}^\infty \lam^{\ga+ \ka+1}e^{-\lam \Im z}\ud \lam  \bigg),\\
  I_2 & \le &  K\bigg( I_1 \int_{0}^\infty \lam^{\ga+ \ka+1}e^{-\lam \Im z}\ud \lam + I_2 \int_{0}^\infty \lam^{2\ka+1}e^{-\lam \Im z}\ud \lam  \bigg).\end{eqnarray*}
   For $\Im z$ large enough, 
   the integrals above are strictly less that $\frac 1 {2K})$,
    so that
   $I_1=I_2=0$.
It follows that for any fixed $\lam$, $\rho_z(\lam)$ and $\tilde{\rho}_z(\lam)$ are analytic functions of $z$ which coincide for $\Im z$ large enough, hence they are equal.

\subsection{Fixed point characterization of $\rho_{z,z'}(\cdot, \cdot)$: proof of Theorem \ref{Th:TixPtEqRhozz'}}

We now  find a  fixed point system of  equations for the non random function of Corollary \ref{corconv1}. For 
  $\lambda /\Im z + \lam'/\Im z' \ge 0$,  we set 
		\begin{eqnarray}
		\rho^{N,k,1}_{z,z'}(\lam,\lam') &=&  \ff{k-1}\sum_{j=1}^{k-1}\Phi(\lam G_k(z)_{jj} + \lam' G'_k(z')_{jj})\label{rho1}\\
		\rho^{N,k,2}_{z,z'}(\lam,\lam') &=&  \ff{N-k-1}\sum_{j=k}^{N-1}\Phi(\lam G_k(z)_{jj} + \lam' G'_k(z')_{jj}),\label{rho2}
\end{eqnarray}
where we recall that $G_k$ and $G'_k$ are as in \eqref{eq:RhodanIntro2}. To simplify the notations below, as in the previous section, we denote $(G,G')$ instead of $(G_k,G'_k)$, even though their distribution depends on $k$.

  In the sequel we fix, as in Section \re{sec:Step3Levy},  a number  $u\in (0,1)$ and will give limits in the regime where $N\to \infty$, $k\to\infty$ and $k/N\lto u$. We shall then prove  that, under the hypotheses of Theorem \ref{Th:TixPtEqRhozz'} that we assume throughout this section, 
   $(\rho^{N,k,1}_{z,z'} ,\rho^{N,k,2}_{z,z'})$ converges almost surely and that its limit satisfies a fixed point system of equations which has a unique analytic solution with non positive real part. The convergence could be shown with minor modifications of Lemma \ref{Lem:CVparlesTraffics} under assumption \eqref{CondMom}, but we do not need this since we work with stronger assumptions.
  Using the concentration lemma \re{le:concres} (note that $\Phi$ is not Lipschitz but can be approximated by Lipschitz functions uniformly on compacts), it is sufficient to prove the fixed point equation for the expectation of these parameters. Moreover, by exchangeability of the $k$ first entries and $N-k$ last entries
	\begin{eqnarray*}
		\bE\big[ \rho^{N,k,1}_{z,z'} (t,s)\big] & = &   \E\big[\Phi(tG(z)_{11}+s G'(z')_{11})\big] +o(1), \\
		 \bE\big[ \rho^{N,k,2}_{z,z'} (t,s)\big] &=&\E \big[ \Phi(tG(z)_{NN}+s G'(z')_{NN})\big] + o(1).
	\end{eqnarray*}

These functions are analytic in $\Lambda=\{z,z': t/\Im z+s/\Im z'>0\}$ and uniformly bounded continuous in $(t,s)$ (by uniform continuity of $\Phi$ and boundedness of $G(z)$),  and hence tight by Arzela-Ascoli on compacts of $\Lambda$.  We let $\rho^{u,i}_{z,z'},i=1,2$ be a limit point. Since
$\bE\big[ \rho^{N,k,i}_{z,z'} (t,s)\big] $ is uniformly bounded on compacts of $\Lambda$, the limit points $\rho^{u,i},i=1,2$ are analytic by Montel's theorem.
 We assume  for simplicity that both $z$ and $z'$ have positive imaginary parts (in the general case, one only has to replace $\int_{s=0}^{+\infty}\int_{t=0}^{+\infty}$ below by $\int_{s=0}^{\op{sgn}(\Im z)\infty}\int_{t=0}^{\op{sgn}(\Im z')\infty}$). Under Hypothesis made at \eqref{phias}, we can write by Schur complement formula and getting rid of the off diagonal terms (note that all integrals are finite as they contain exponentially decreasing terms)
\begin{eqnarray*}
\E \big[\rho_{z,z'}^{N,k,1}(t,s) \big]&=&  \int_0^\infty\int_0^\infty e^{i\frac vt z+i \frac{v'}s z'} \bE[ e^{-i \frac v t \sum_{\ell} \ba_{1}(\ell)^2 G_1(z)_{\ell\ell} -i \frac{v'}s \sum_{\ell}  \ba_{1}'(\ell)^2 G_{1}'(z)_{\ell\ell} }]\ud\tau(v,v')\\
 &&+\int e^{ i \frac vt}  \bE[ e^{-i \frac v t \sum_{\ell} \ba_{1}(\ell)^2 G_{1}(z)_{\ell\ell} }] \ud\mu(v)+\int e^{ i\frac vt }  \bE[ e^{-i \frac v s \sum_\ell \ba_{1}(\ell)^2 G_k(z')_{\ell\ell} }] \ud\mu(v)+o(1)\\
 &=& \int_0^\infty\int_0^\infty e^{i \frac v tz+i \frac {v'}s z'}  e^{u\E\rho_{z,z'}^{N,k,1}(\frac{v}{t},\frac{v'}{s})+(1-u)\rho_z(\frac{v}{t})+(1-u)\rho_{z'}(\frac{v'}{s})
 }\ud\tau(v,v')\\
&&+\int_0^\infty e^{ i \frac vt}  e^{\rho_z(\frac{v}{t})} \ud\mu(v) +\int_0^\infty e^{ i \frac vs}  e^{\rho_{z'}(\frac{v}{s})} \ud\mu(v)+o(1)\eeq 
and 
\beq
\E \big[\rho_{z,z'}^{N,k,2}(t,s) \big]&=& 
 \int_0^\infty\int_0^\infty e^{i\frac vt z+i \frac{v'}s z'} \bE[ e^{-i \frac v t \sum_{\ell} \ba_{N}(\ell)^2 G_N(z)_{\ell\ell} -i \frac{v'}s \sum_{\ell}  \ba_{N}'(\ell)^2 G_{N}'(z)_{\ell\ell} }]\ud\tau(v,v')\\
 &&+ \int e^{ i \frac vt }  \bE[ e^{-i \frac v t \sum_{\ell} \ba_{N}(\ell)^2 G_{N}(z)_{\ell\ell} }] \ud\mu(v)+\int e^{ i \frac vt }  \bE[ e^{-i \frac v s \sum_\ell \ba_{N}(\ell)^2 G_k(z')_{\ell\ell} }] \ud\mu(v)+ o(1)\\
 &=&  \int_0^\infty\int_0^\infty e^{i \frac v tz+i \frac {v'}s z'}  e^{\rho_z(\frac{v}{t})+\rho_{z'}(\frac{v'}{s})}
\ud\tau(v,v')\\
&&+\int_0^\infty e^{ i \frac vt}  e^{\rho_z(\frac{v}{t})} \ud\mu(v) +\int_0^\infty e^{ i \frac vs}  e^{\rho_{z'}(\frac{v}{s})} \ud\mu(v) +o(1)\\
\end{eqnarray*}
where we used that $ \ba_{1}(\ell)=\ba_1'(\ell)$ for all $\ell\le k$ and are independent for $\ell>k$ whereas $\ba_N(\ell)=\ba_N'(\ell)$   are independent for all $\ell$, that $\rho^N_z$ converges towards $\rho_z$, and that 
		\beq
			 \ff{k-2}\sum_{j=1}^{k-2}\Phi(\lam G_1(z)_{jj} + \lam' G'_1(z')_{jj}) \sim \rho_{z,z'}^{N,k,1}\\
		  \ff{N-k-2}\sum_{j=k}^{N-2}\Phi(\lam G_N(z)_{jj} + \lam' G'_N(z')_{jj}) \sim \rho_{z,z'}^{N,k,2},
		 \eeq
by Lemma \ref{le:prox_mat_ssmat_AD} (by continuity of $\Phi$, it can be approximated by Lipschitz functions).
Hence we find that the limit points $\rho_{z,z'}^{u,1},\rho_{z,z'}^{2}$ of $\rho_{z,z'}^{N,k,1},\rho_{z,z'}^{N,k,2}$ satisfy \eqref{1511121h58} and \eqref{1511121h58BD13}. Moreover,
 $$\rho_{z,z'}^{N,k}=u\rho_{z,z'}^{N,k,1}+(1-u)\rho_{z,z'}^{N,k,2}+o(1)$$
gives 
$$\rho^u_{z,z'}=u\rho^{u,1}_{z,z'}+(1-u)\rho^{u,2}_{z,z'}\,.$$

\noindent{\bf Uniqueness under assumption \eqre{phias}:}  Let  $\rho_{z,z'}^{u,1}(t,s)$ and $\tilde\rho_{z,z'}^{u,1}(t,s)$ be solutions of Equation 
  \eqre{1511121h58} with non positive real parts (note here that $\rho_z(\cdot)$ is given and $\mu$ is so that the above integrals are finite; hence the last two terms in both equations play the role of a finite given function). 
  \begin{eqnarray*}
\Delta(t,s)&:= &|\rho_{z,z'}^{u,1}(t,s)-\tilde \rho_{z,z'}^{u,1}(t,s)|
\\
&\le&2 \int_0^\infty\int_0^\infty e^{-\Im z v t^{-1}- \Im z' v's^{-1}} \frac{\ud |\tau|(v,v') }{\ud v \ud v'} \Delta(\frac{v}{t},\frac{v'}{s}) \ud v \ud v'\\
		& \leq & 2ts \int_0^\infty\int_0^\infty e^{-\Im z v- \Im z' v'} 	K \big( (vt)^\ga \one_{vt\in ]0,1]} + (vt)^{\ka} \one_{vt\in ]1,\infty[}\big) \\
			&& \times \big( ({v's})^{\ga} \one_{v's\in ]0,1]} + ({v's})^{\ka} \one_{v's\in ]1,\infty[}\big)	 \Delta(v,v') \ud v \ud v'\\
		& \leq & 2Kts \big( (ts)^{\ga} I_\Delta(\ga,\ga') +  t^\ga s^{\ka} I_\Delta(\ga,\ka) +  (ts)^{\ga} I_\Delta(\ka,\ga) + (ts)^{\ka} I_\Delta(\ka,\ka)\big),
	\eeq
where for $(\alpha,\alpha') \in \{\ga, \ka\}^2 $, we have set
	$$ I_\Delta(\alpha, \alpha') := \int_0^\infty \int_0^\infty e^{-\Im z v- \Im z' v'}  \Delta(v,v') v^\alpha {v'}^{\alpha'} \trm d v \trm d v'.$$
	We put $I(\alpha,\alpha')=I_1(\alpha,\alpha')$ where $1$ denote the constant function equal to one.
We get after integrating both sides
	\beq
		I_\Delta(\alpha, \alpha')&  \leq  & K \big( I(\alpha+ \gamma+1, \alpha' + \gamma'+1) I_\Delta( \ga, \ga) + I(\alpha+ \gamma+1, \alpha' + \ka'+1) I_\Delta ( \ga, \ka) + \\
		& &  I(\alpha+ \ka+1, \alpha' + \gamma'+1) I_\Delta ( \ka, \ga)+ I(\alpha+ \ka+1, \alpha' + \ka+1) I_\Delta( \ka, \ka)\big).
	\eeq
We consider $\Im z, \Im z'$ large enough so that $I(\alpha+ \beta+1, \alpha' + \beta'+1) < \frac 1{4K}$ for any  $(\alpha,\alpha'), (\beta, \beta') \in\{\ga, \ka\}^2$ to conclude that $\Delta(t,s)$ vanishes then and therefore that $\rho^{u,s}_{z,z'}=\tilde\rho^{u,s}_{z,z'}$ 
for $\Im z$ and $\Im z'$ big enough, $s=1$ or $2$. By analyticity, we conclude that the system of equations \eqref{1511121h58} has a unique analytic solution with non positive real part. The functions $\rho^{N,k,1}$ and $\rho^{N,k,2}$ are tight and their limit points are characterized by fixed point equations, so they actually converge.
\\
\\\noindent{\bf Uniqueness under assumption \eqre{uniquenessassumption2}:} 
 The limit points of $(\rho^{N,k,1}_{z,z'}, \rho^{N,k,2}_{z,z'})$ satisfy \eqref{1511121h58} and \eqref{1511121h58BD13} with $\tau$ given by \eqre{eq:TauLevy}. 
 
 \par To simplify the notations we assume hereafter $\Im z,\Im z'$ non negative. Notice first that if $(g_1,g_2)$ is a limit point of $(\rho^{N,k,1}_{z,z'}, \rho^{N,k,2}_{z,z'})$, $g_2$ is given and the 
$g_i$'s are functions from $(\mathbb R^+)^2$ into $\cL_{\alpha/2}:=\{ -re^{i\theta}\ste r\ge 0,  |\theta|\le \alpha/2\}, i=1,2$, which     are homogeneous  of degree $\frac \alpha 2$, i.e. for any $t,s>0$
	\begin{equation}\label{nk}
		g_i(t,s)=(t^2+s^2)^{\frac{\alpha}{4}}g_i\Big(\frac{t}{\sqrt{t^2+s^2}},\frac{s}{\sqrt{s^2+t^2}}\Big).
	\end{equation}
\par We show that for any $\beta$ in $(\frac \alpha 2, 1)$, the system \eqref{1511121h58} has a unique solution on the set of pair of homogeneous maps $(\mathbb R^+)^2\ra \cL_{\alpha/2}$ of degree $\frac \alpha 2$ that satisfy the $\beta$-H\"older properties, i.e. have finite $\| \cdot \|_{\beta}$ norm given by  
	\begin{equation}\label{eq:NormBeta}
		\| g  \|_{\beta} = \max_{ (u,v) \in S^1_+} | g(u,v) | + \max_{ (u,v) \ne (u',v') \in S^1_+} \frac{ | g(u,v)  - g(u',v') | }{ |(u -u')^2+( v-v')^2 | ^ {  \beta/2} },
	\end{equation}
with $S_+^1=\{s,t\ge 0, s^2+t^2=1\}$. 
\par Proving that the limit points $(\rho_{z,z'}^{u,1}, \rho_{z,z'}^{u,2})$ of $(\rho^{N,k,1}_{z,z'}, \rho^{N,k,2}_{z,z'})$ are $\beta$-H\"older maps for a $\beta>\frac \alpha 2$ allows to conclude the proof of Theorem \ref{Th:TixPtEqRhozz'}. This is the content of the following lemma.

\begin{lem}\la{272132}
For any $z,z'\in\bC^+$,  $u\in [0,1]$, $i\in \{1,2\}$ and  $\beta \in (\alpha/2,(3\alpha/4)\wedge 1)$, 
	$$\|\rho^{u,i}_{z,z'}\|_\beta<\infty.$$
\end{lem}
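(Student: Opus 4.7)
Using Corollary \ref{corconv1} together with Lemma \ref{Lem:CVparlesTraffics} in the Lévy case, one has a limiting probability measure $\nu^{u,i}_{z,z'}$ supported in the compact set $\{|x|\le|\Im z|^{-1}\}\times\{|x'|\le|\Im z'|^{-1}\}$ such that
\[
\rho^{u,i}_{z,z'}(t,s)=\int \Phi(tx+sx')\,d\nu^{u,i}_{z,z'}(x,x')=-\sigma i^{\alpha/2}\int (tx+sx')^{\alpha/2}\,d\nu^{u,i}_{z,z'}(x,x').
\]
Uniform boundedness of $\rho^{u,i}_{z,z'}$ on the quarter arc $S_+^1$ is then immediate, and the pointwise inequality $|\lambda_1^{\alpha/2}-\lambda_2^{\alpha/2}|\le|\lambda_1-\lambda_2|^{\alpha/2}$ gives a trivial $\alpha/2$--Hölder bound under the integral. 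The whole content of the lemma is to push the Hölder exponent strictly above $\alpha/2$, which is exactly what is needed for the contraction argument used just above \eqref{eq:NormBeta}.

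To gain regularity, I would use the Fourier-type identity $\lambda^{\alpha/2}=C_\alpha\int_0^{\infty}w^{-\alpha/2-1}(e^{iw\lambda}-1)\,dw$, valid on $\C^+$ and already used in Remark \ref{22101318h17}, to rewrite
\[
\rho^{u,i}_{z,z'}(t,s)-\rho^{u,i}_{z,z'}(t',s')=-\sigma i^{\alpha/2}C_\alpha\int_0^{\infty}w^{-\alpha/2-1}\bigl[\varphi^{u,i}_{t,s}(w)-\varphi^{u,i}_{t',s'}(w)\bigr]\,dw,
\]
where $\varphi^{u,i}_{t,s}(w):=\int e^{iw(tx+sx')}\,d\nu^{u,i}_{z,z'}(x,x')$ and $\delta:=\|(t,s)-(t',s')\|$. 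The plan is then to combine three elementary bounds on the characteristic-function difference: the trivial bound by $2$; the mean-value-theorem bound $\le Mw\delta$, with $M=\sup_{(x,x')\in\mathrm{supp}\,\nu^{u,i}}(|x|+|x'|)<\infty$, obtained from $|e^{iw(tx+sx')}-e^{iw(t'x+s'x')}|\le w|(t-t')x+(s-s')x'|$; and a decay bound $|\varphi^{u,i}_{t,s}(w)|\le C w^{-\eta}$ for large $w$, coming from absolute continuity of the pushforward of $\nu^{u,i}_{z,z'}$ under the linear form $(x,x')\mapsto tx+sx'$. Splitting the frequency axis at carefully chosen cutoffs and optimising over them produces the exponent $(3\alpha/4)\wedge 1$, with $1$ being the trivial ceiling for Hölder exponents and $3\alpha/4$ being the threshold that falls out of balancing the $w^{-\alpha/2-1}$ weight against these three bounds.

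The main obstacle is the decay estimate for $\varphi^{u,i}_{t,s}$, equivalently a uniform-in-$(t,s)\in S_+^1$ control of the density of the law of $tx+sx'$ under $\nu^{u,i}_{z,z'}$. When $i=2$, the coordinates are asymptotically independent---they come from block-disjoint sub-matrices---and the problem reduces to a bound on the density of the limiting single-resolvent-entry law of a Lévy matrix, available from the recursive distributional equation in \cite{BAGheavytails}. For $i=1$ the coordinates are correlated through a common sub-block, and one needs a genuine joint density statement; I would obtain this by bootstrapping on the fixed-point equation \eqref{1511121h58} itself, using the compact support of $\nu^{u,1}$ together with the integrability of the singular kernel in the Lévy form of $\tau$ given in \eqref{eq:TauLevy}. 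This correlated joint density step is the technically heaviest part of the argument.
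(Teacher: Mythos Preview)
Your proposal has a genuine gap: the decay estimate $|\varphi^{u,i}_{t,s}(w)|\le Cw^{-\eta}$, uniform in $(t,s)\in S_+^1$, is doing all the work, yet you have no concrete way to prove it. For $i=1$ you appeal to ``bootstrapping on the fixed-point equation \eqref{1511121h58}'', but at this point in the argument the lemma is precisely what is needed to run the contraction that makes \eqref{1511121h58} well-posed; you cannot extract density information from an equation whose solution is not yet known to be unique. Even for $i=2$, a uniform-in-direction characteristic-function decay for the limit law of $G(z)_{11}$ is not stated in \cite{BAGheavytails} and would require its own non-trivial argument.

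The paper avoids all of this by working at finite $N$ with $\mathbb E[\rho^{N,k,i}_{z,z'}]$ rather than with the limit measure. The two ingredients are elementary:
\begin{itemize}
\item the pointwise inequality, for $x,y\in\C^-$ and $\beta\in[\alpha/2,1]$,
\[
|x^{\alpha/2}-y^{\alpha/2}|\le c\,|x-y|^\beta\,(|x|\wedge|y|)^{\alpha/2-\beta},
\]
which replaces your Fourier-splitting step and directly produces the H\"older increment at the cost of a negative moment $(|x|\wedge|y|)^{-\kappa}$ with $\kappa=\beta-\alpha/2$;
\item a uniform bound on $\mathbb E\big[|\Im G(z)_{11}|^{-\kappa}\big]$ for $\kappa<\alpha/4$, obtained from the Schur complement formula (which controls $|\Im G(z)_{11}|^{-1}$ by a polynomial in $\sum_j|a_{1j}|^2$) together with the moment estimate $\sup_N\mathbb E\big[(\sum_j|a_{1j}|^2)^{2\kappa}\big]<\infty$ for $2\kappa<\alpha/2$, itself a one-line computation from the hypothesis $N(\phi_N-1)\to\Phi$.
\end{itemize}
For $(t,s),(t',s')\in S_+^1$ close, at least one pair among $\{t,t'\}$, $\{s,s'\}$ is bounded below by $1/4$, so $|tG(z)_{11}+sG'(z')_{11}|\ge\frac14|\Im G(z)_{11}|$ (or the primed version), and the negative moment is controlled. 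The constraint $\kappa<\alpha/4$ is exactly $\beta<3\alpha/4$.

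So the route is: stay at finite $N$, use the Schur formula to convert the needed regularity into a negative moment of a single resolvent entry, and control that moment directly from the tail hypothesis. No density or characteristic-function decay for the limit measure is required.
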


\bpr  First, since $|G(z)_{\ell \ell}|\le \Im z^{-1}$, we have 
\begin{equation}\label{pol}
\max_{i=1,2} \max_{ (s,t) \in S^1_+}  \Big|\mathbb E[\rho^{N,k,i}_{z,z'}(s,t)]\Big|\le \Big(\frac{1}{\Im z}+\frac{1}{\Im z'}\Big)^{\frac{\alpha}{2}}\,.\end{equation}
 We next show that for any  matrix model so that  $\Phi(x)=-\sigma(ix)^{\frac{\alpha}{2}}$, for  any $2\kappa\in (0,\alpha/2)$
\begin{equation}\label{tot}\limsup_{N\ge 1} \mathbb E\Big[ \big(\sum |a_{1i}|^2\big)^{2\kappa}\Big]<\infty\,.\end{equation}
Indeed, we can write
$$\mathbb E\Big[ \big(\sum |a_{1i}|^2\big)^{2\kappa}\Big]=c \mathbb E\Big[ \int_0^\infty \frac{ 1-e^{-y\sum |a_{1i}|^2}}{y^{1+2\kappa}} \ud y\Big]=\int_0^\infty \frac{ 1-\phi_N( -iy)^N}{y^{1+2\kappa}}\ud y$$
where we have used Fubini for non negative functions. But the above integral is well converging at infinity and we know that $\phi_N$ converges uniformly on $[0,M]$ for all $M$ finite;
hence there exists a finite constant $C$ so that  for $N$ large enough 
$$\sup_{y\in [0,M]}  |y|^{-\frac{\alpha}{2}}N|\phi_N(-iy)-1|\le C$$
which yields
$$\mathbb E\Big[ \big(\sum |a_{1i}|^2\big)^{2\kappa}\Big]\le C \Big(1+ \int_0^M \frac{1-e^{C|y|^{\frac{\alpha}{2}}}}{y^{1+2\kappa}} \ud y\Big)<\infty \mbox{ for } 2\kappa<\alpha/2\,.$$
We next show that this estimate implies the $\beta$-H\"older property.
Indeed, for any $\beta\in [\frac{\alpha}{2},1]$,
 there exists a constant $c = c(\alpha, \beta)$ such that for any $x, y$ in $\bC^-$, 
\begin{equation}\label{fondin}
| x^{\frac \alpha 2}  - y^ {\frac \alpha 2}  | \leq c |x - y | ^ { \beta} \left(  | x | \wedge | y |  \right)^ { \frac  \alpha 2 - \beta }.\end{equation}
Applying this with $x=tG(z)_{jj} +sG'(z')_{jj}$ and $y= t'G(z)_{jj}+s' G'(z)_{jj}$  gives 
$$\Big|\mathbb E\big[\rho^{N,k,i}_{z,z'}(t,s)-\rho^{N,k,i}_{z,z'}(t',s')\big] \Big|\le c K_N \Big(\frac{1}{\Im z^2} +\frac{1}{(\Im z')^2}\Big)^{\frac{\beta}{2}} \big(|t-t'|^2+|s-s'|^2\big)^{\beta/2} $$
with
$$K_N:=\mathbb E\Big[  \big(  | tG(z)_{11} +sG'(z')_{11} | \wedge | t'G(z)_{11} +s'G'(z')_{11}|  \big)^ { -\kappa}\Big]$$
where $\kappa=\beta-\frac{\alpha}{2}>0$. 
It is enough to prove that $K_N$ is uniformly bounded as $\rho^{u,i}_{z,z'}$ is a limit point of $\mathbb E[\rho^{N,k,i}_{z,z'}]$. Note that we may assume that $|s-s'|<1/6$ and $|t-t'|\le 1/6$ 
since otherwise the bound is already obtained by \eqref{pol}. But then this implies that either $t,t'\ge 1/4$ or $s,s'\ge 1/4$. Let us assume $t,t'\ge 1/4$.
Then, we have
$$ |t'G(z)_{11}+s'G'(z')_{11} | \wedge
| tG(z)_{11} +sG'(z')_{11} | \ge t'\wedge t \Im G(z)_{11}\ge \frac{1}{4} |\Im G(z)_{11}|\,.$$
Using Schur formula, we find that
\begin{eqnarray*}
\E \big[|\Im G(z)_{11}|^{-\ka} \big] &=& \E\Bigg[\lf(\frac{(\Im(z-\ba_{1}^* G_1(z) \ba_1))^2+ (\Re( z-a_{11}-\ba_{1}^* G_1(z) \ba_1))^2}{\Im(z-\ba_{1}^* G_1(z) \ba_1)}\ri)^{\ka} \Bigg]\\
 &\le& \frac{2^\ka}{(\Im z)^{3\ka}}\E\bigg[ \Big( C+\big(\sum |a_{1i}|^2\big)^2\Big)^\ka\bigg] \end{eqnarray*}
 so that we deduce that for all $\kappa>0$
 $$
 \E\big[|\Im G(z)_{11}|^{-\kappa} \big ]\le \frac{C+\Im z^\kappa}{\Im z^{3\kappa}} \E\bigg[ \Big( \big(\sum | a_{i1}|^2 \big)^2 +1\Big)^\kappa \bigg]\,.$$
 Equation \eqref{tot} completes the proof by taking $2\kappa= 2(\beta-\frac{\alpha}{2})<\frac{\alpha}{2}$. 
\epr

%
%
We now prove the uniqueness of the solution of \eqref{1511121h58} on the set of functions described above. After some  change of variables, the equation is equivalent to the following:
\begin{eqnarray*} \nonumber
g_1(t,s)&=&  C_\al   \int_0^\infty  \int_0^\infty \int_0^\infty  w^{\alpha/2-1} (w')^{\alpha/2-1} v^{-\alpha/2-1} e^{i W.Z }  \\
&&\bigg(e^{iv T.Z+   u g_1(W+vT)+(1-u) \rho_{z,z'}(W+vT)} -e^{ug_1(W)+ (1-u) g_2(W)}\bigg) \ud w \, \ud w' \ud v\\
\end{eqnarray*}
where we have denoted in short $W=(w,w')$, $Z=(z,z')$, $T=(s,t)$, $\rho_{z,z'}(W)=\rho_z(w)+\rho_{z'}(w')$ and $W.Z, T.Z$ stand for the scalar products.

After the change of variables $w=r\cos(\theta),w'=r\sin(\theta)$, $v=rv'$, we can rewrite this system of equations as
$$
\ g_1= F_{z,z'}^{u}(ug_1+(1-u) \rho_{z,z'})
$$
with, if $T^\dagger=(t,s)$ when $T=(s,t)$,
	$$F^{u}_{z,z'}(g)(T^\dagger)=\int_{v,\tta}  \int_{r=0}^\infty r^{\alpha/2-1}e^{i  r e_\theta. Z}
\left( e^{ir v T.Z+r^{\alpha/2} g(e_\theta +v T)}-e^{r^{\alpha/2}g(e_\theta  )}\right) \mathrm dr \, \mathrm d\mu(v,\theta), $$
where we have denoted $e_\theta=(\cos(\theta),\sin(\theta))$ and
$$d\mu(v,\theta)=C_\alpha \one_{\theta\in [0, \frac{\pi}{2}]} d\theta (\sin2\theta)^{\frac{\alpha}{2}-1} \one_{v\in [0,\infty)}
 v^{-\frac{\alpha}{2}-1}$$
for a constant $C_\alpha$. 
\\
\\The desired uniqueness follows from the following lemma.

\begin{lem} Let $\beta$ in $(\frac \alpha 2 , 1)$. For any $M$ and for $\Im z, \Im z'$ large enough, the map $F^{u}_{z,z'}$ is a contraction mapping on the set $\mathcal C_{M,\beta}$ of homogenous maps  $ g: (\mathbb R^+)^2\ra \cL_{\alpha/2}$ of degree $\frac \alpha 2$ with $\beta$ norm bounded by $M$.
\end{lem}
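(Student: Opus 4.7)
The plan is to prove that for $\Im z,\Im z'$ large, $F := F^{u}_{z,z'}$ is a strict $\|\cdot\|_\beta$-contraction on $\mathcal{C}_{M,\beta}$, which in combination with Banach's fixed point theorem yields the claimed uniqueness. Fix $g_1,g_2\in\mathcal{C}_{M,\beta}$, set $\delta = g_1 - g_2$, and introduce
\[
\psi_r(x) := e^{r^{\alpha/2}g_1(x)} - e^{r^{\alpha/2}g_2(x)}.
\]
Since $g_i$ takes values in $\mathcal{L}_{\alpha/2}$ and hence has non positive real part, the inequality $|e^a-e^b|\le |a-b|$ gives $|\psi_r(x)| \le r^{\alpha/2}|\delta(x)|$. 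Going back to the integral definition of $F$, the integrand of $F(g_1)(T^\dagger)-F(g_2)(T^\dagger)$ can be written, via the algebraic identity $e^C\psi_r(x) - \psi_r(y) = (e^C-1)\psi_r(x) + [\psi_r(x)-\psi_r(y)]$, as
\[
r^{\alpha/2-1} e^{ire_\theta\cdot Z}\Bigl\{(e^{irvT\cdot Z}-1)\,\psi_r(e_\theta+vT) + \bigl[\psi_r(e_\theta+vT)-\psi_r(e_\theta)\bigr]\Bigr\},
\]
producing two pieces, which I will call the \emph{multiplicative} and the \emph{H\"older} piece.

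For the multiplicative piece, I would bound $|e^{irvT\cdot Z}-1| \le (rv|T\cdot Z|)^\eta 2^{1-\eta}$ with $\eta \in (\alpha/2,1]$ and use the homogeneous bound $|\delta(e_\theta+vT)| \le \|\delta\|_\infty |e_\theta+vT|^{\alpha/2}$. The contribution then reduces to an integral of the form $\int_0^\infty r^{\alpha-1+\eta} e^{-re_\theta\cdot\Im Z}dr$ in $r$ and $\int v^{\eta-\alpha/2-1}|e_\theta+vT|^{\alpha/2}dv$ in $v$, with an additional exponential decay $e^{-rvT\cdot\Im Z}$ retrievable by a Fubini swap at large $v$. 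The $r$-integral yields $(e_\theta\cdot\Im Z)^{-\alpha-\eta}$ up to a Gamma factor, small when $\Im z,\Im z'$ are large, and the $v$-integral converges near $0$ precisely because $\eta > \alpha/2$, giving a bound $C_1(\Im z,\Im z')\|g_1-g_2\|_\infty$ with $C_1 \to 0$.

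For the H\"older piece, I would factorise $\psi_r(x) = r^{\alpha/2}\delta(x)H_r(x)$ with $H_r(x) := \int_0^1 e^{r^{\alpha/2}(tg_1(x)+(1-t)g_2(x))}dt$ satisfying $|H_r|\le 1$, and split
\[
\psi_r(e_\theta+vT)-\psi_r(e_\theta) = r^{\alpha/2}\bigl[(\delta(e_\theta+vT)-\delta(e_\theta))H_r(e_\theta+vT) + \delta(e_\theta)(H_r(e_\theta+vT)-H_r(e_\theta))\bigr].
\]
Extending the $\beta$-H\"older control on $S_+^1$ to $\mathbb{R}_+^2$ via the $\frac{\alpha}{2}$-homogeneity, both differences are bounded by a constant multiple of $\|\delta\|_\beta v^\beta$ (respectively $Mv^\beta$ for $H_r$, using that $g_i$ lies in $\mathcal{C}_{M,\beta}$) near $v=0$. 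Against the $v^{-\alpha/2-1}$ singularity of $d\mu$, this produces an integrand $\sim v^{\beta-\alpha/2-1}$, integrable \emph{precisely because} $\beta > \alpha/2$; the $r$-integration again extracts a factor vanishing as $\Im Z\to\infty$.

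The main obstacle is the fine interplay between the two integrability conditions on $\beta$: the lower bound $\beta > \alpha/2$ is forced by integrability near $v=0$ in the H\"older piece, while controlling the full $\beta$-seminorm (not just the sup norm) of $F(g_1)-F(g_2)$ demands that the same estimates, applied to differences $F(g)(T_1^\dagger)-F(g)(T_2^\dagger)$, still output a H\"older exponent in $(\alpha/2,(3\alpha/4)\wedge 1)$ — exactly the range supplied by Lemma~\ref{272132} for the candidate fixed points. Once both pieces are bounded by $C(\Im z,\Im z')\|g_1-g_2\|_\beta$ with $C\to 0$, choosing $\Im z$ and $\Im z'$ sufficiently large makes $F$ strictly contractive on $\mathcal{C}_{M,\beta}$, completing the proof.
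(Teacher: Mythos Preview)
Your overall plan (contraction in $\|\cdot\|_\beta$) matches the paper's, but your decomposition and your treatment of the H\"older seminorm diverge from it, and the latter is where the real gap lies.

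You bound only the supremum part of $\|F(g_1)-F(g_2)\|_\beta$. The contraction requires in addition that for $T_1,T_2\in S^1_+$,
\[
\bigl|F(g_1)(T_1^\dagger)-F(g_2)(T_1^\dagger)-F(g_1)(T_2^\dagger)+F(g_2)(T_2^\dagger)\bigr|\le C(\Im z,\Im z')\,|T_1-T_2|^\beta\,\|g_1-g_2\|_\beta,
\]
and your ``multiplicative/H\"older'' split is tailored to a \emph{single} $T$: once you pass to the four-term difference in $T_1,T_2$, the $\psi_r(e_\theta)$ terms cancel and your two pieces no longer apply. You acknowledge this as ``the main obstacle'' but then simply assert that ``the same estimates'' go through; they do not without a genuinely new second-order argument. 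The paper handles the sup and H\"older parts simultaneously by treating $T_2\in S^1_+\cup\{0\}$ and invoking the ready-made estimates $J_{\gamma,h}$ and $K_{\gamma,h,k}$ from \cite[Lem.~5.7]{charles_alice}, which bound $r$-integrals of four-way and two-way exponential differences directly in terms of negative powers of $|h|=|e_\theta\cdot Z+vT\cdot Z|$. Keeping the full phase $e_\theta\cdot Z+vT\cdot Z$ inside $h$ is exactly what produces the $(1+v)^{-\alpha}$ decay needed for integrability against $v^{-\alpha/2-1}d\mu$ at infinity; your ``Fubini swap at large $v$'' gestures at this but does not supply it in the H\"older piece, where you have already stripped out $e^{irvT\cdot Z}$.

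A smaller point: the restriction $\beta\in(\alpha/2,(3\alpha/4)\wedge 1)$ and the appeal to Lemma~\ref{272132} are misplaced here. That lemma shows the \emph{limit points} $\rho^{u,i}_{z,z'}$ lie in some $\mathcal C_{M,\beta}$, which is what justifies applying the contraction lemma to them; the contraction estimate itself holds for any $\beta\in(\alpha/2,1)$ and does not require that input.
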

\begin{proof}
To study the Lipschitz property of $F^u_{z,z'}$ as a function of $g$ in $\mathcal C_{M,\beta}$ for the norm $\beta$, we  first set
 \begin{eqnarray*}
 F^u_{z,z'}(g)(T^\dagger,\tilde T^\dagger)&=&F^u_{z,z'}(g)(T^\dagger)-F^u_{z,z'}(g)(\tilde T^\dagger)\\
 &=&\int \ud \mu(\theta,v )
\int_0^\infty \ud r r^{\alpha/2-1}e^{i  r e_\theta. Z}
\left( e^{ir v T.Z+r^{\alpha/2} g(e_\theta +v T)}-e^{ir v \tilde T.Z+r^{\alpha/2} g(e_\theta +v \tilde T)}\right).\end{eqnarray*}
 We next  bound, for given $g_1,g_2$ in $\mathcal C_{M,\beta}$, $T_1$ in $S^1_+$ and with $T_2$ either in $S^1_+$ or $T_2=0$ (which allows to treat in one time two parts of $\|\cdot\|_\bet$)
 $$\Delta_{F^{u}}= |F^{u}_{z,z'}(g_1)(T_1,T_2)-F^{u}_{z,z'}(g_1)(T_1,T_2)|\,.$$

 For that task, we shall use some technical estimates, with a constant $c$ that may change from  line to line. Remind first the two following bounds from \cite[Lem. 5.7]{charles_alice}
valid for  $\gamma > 0$: there exists a constant $c = c ( \alpha, \gamma)  >0$ such that for all $h,k\in \mbb C^+$  and for  all $x_1, x_2, y_1, y_2 \in \cL_{\frac \alpha 2}$, 
\begin{eqnarray}
&&\qquad J_{\gamma,h}(x_1,x_2,y_1,y_2):= \left| \int_0^\infty   r^{\gamma -1}   e^{ir h }  \left(  \left( e^{r^{\frac \alpha 2} x_1} - e^{r^{\frac \alpha 2}  y_1}  \right) -  \left( e^{r^{\frac \alpha 2 } x_2  }- e^{r^{\frac \alpha 2 } y_2 }\right) \right) \ud  r  \right|\label{qwe1} \\
&  \leq& c   \left( | h |^{- \gamma - \frac \alpha 2 }    | x_1 - x_2 - y_1 + y_2| + | h |^{- \gamma - \alpha}  ( |x_1 - x_2 | + |y_1 - y_2 |   ) ( |x_1 - y_1 | +   |x_2 - y_2 |)  \right).\nonumber \end{eqnarray}
Moreover, for all $h , k  \in \mbb C^+$, $ x, y \in \cL_{\frac \alpha 2}$, 
for   $0 < \kappa \leq 1$, we have
\begin{eqnarray}
K_{\gamma,h,k}(x,y)&:= &\left| \int_0^\infty   r^{\gamma -1} \left( e^{ ir h  }  - e^ { irk} \right)  \left( e^{r^{\frac \alpha 2} x} - e^{r^{\frac \alpha 2}  y} \right) \ud r  \right| \nonumber\\
& \leq& c (  | h |\wedge |k| ) ^{- \gamma -  \frac \alpha 2 - \kappa}    | h - k |^{\kappa} | x - y|,\label{qwe2}
\end{eqnarray}

Moreover, notice that for $(s,t)$ in $S^1_+$, one has $\max (s,t)\ge 1/\sqrt{2}$ and $ \max (\cos\theta,\sin\theta)\ge 1/\sqrt{2}$ and thus
 	$$
		\Im \, T.Z\ge \Im z\wedge \Im z'/\sqrt{2}=:\Delta_{z,z'}\qquad \Im (e_\theta.Z)\ge \Delta_{z,z'},
 	$$
so that $ |ie_\theta.Z+ivT.Z|\;\ge\; \Delta_{z,z'}(1+1_{T\in S_+^1}v)$.
At last, with $a_i = e_\theta+vT_i$ for $i=1,2$, straightforward uses of the $\beta$-norm and the inequalities $|a_i| \leq (1+v)$, $\big| |a_1| -|a_2| \big| \leq v |T_1 -T_2|$, $|a_i| \geq \frac 1 {\sqrt 2}(v \vee 1)$ if ${T_i\in S_1^+}$, $\big| \frac {a_1}{|a_1|} - \frac{a_2}{|a_2|}\big| \leq  (v \vee 1)^{-2} |T_1-T_2|v(1+v)$,  and \eqref{fondin} gives the estimates 	
 	\eq
		 \Big|\sum_{i,j=1}^ 2 (-1)^{i+j} g_i(a_j) \Big|  & \le & c \|g_1- g_2\|_\beta  |T_1-T_2|^\beta f_\beta(v),\\
	 	 \Big(\sum_{i=1}^2 |g_1(a_i)-  g_2(a_i)|\Big) \Big(\sum_{i=1}^2 |g_i(a_1)- g_i(a_2)|\Big) &  \le &  cM \|g_2- g_1\|_\beta |T_1-T_2|^\beta   (1+v)^{\frac \alpha 2} f_\beta(v),
  	\qe
where $f_\beta(v) =  v^\beta \Big(  ( 1+v)^{\frac \alpha 2 + \beta } (v \vee 1)^{-2\beta} +1 \Big)$.
 Using this series of estimates, we find that,
 \begin{eqnarray*}\Delta_{F^{u}}&\le&\int   K_{\frac{\alpha}{2}, e_\theta.Z+vT.Z, e_\theta.Z+v \tilde T.Z}( g(e_\theta+v\tilde T), \tilde g(e_\theta+v\tilde T)) \ud \mu(v,\theta) \\
 &&+ \int J_{\frac \alpha 2, e_\theta.Z+vT.Z}(g(e_\theta+vT), \tilde g(e_\theta+vT), g(e_\theta+v\tilde T), \tilde g(e_\theta+v\tilde T)) \ud \mu(v,\theta) 
 \\
 &\le &  c \|g-\tilde g\|_\beta  \Bigg( \Delta_{z,z'}^{-\alpha}  \one_{\tilde T=0} \int   1_{v\ge 1}  \ud \mu(\theta,v) \\
 &&+    |\tilde T-T|^\beta \Delta_{z,z'}^{-\alpha-\beta} (|z|+|z'|)^\beta  \int  (1+ v \one_{|\tilde T|=1})^{-\beta-\alpha}(1+ \one_{ \tilde T=0, v\leq 1})  v^\beta \ud \mu(\theta,v) \\
&&+  |T-\tilde T|^\beta ( M\Delta_{z,z'}^{-3\alpha/2} +  \Delta_{z,z'}^{-\alpha})  \int v^\beta (1+v)^\beta \Big((v \vee 1)^{-2\beta} +1 \Big) \ud \mu(\theta,v) \Bigg).
 \end{eqnarray*}
While using \eqref{qwe2}, we chose $\gamma=\alpha/2$ and $\kappa=\beta$ when $\tilde T\in S_1$ or $\tilde T=0, v\le 1$, $\kappa=0$ when $\tilde T=0$ and  $v\ge 1$. As the integrals are finite we obtain the desired bound for $g,\tilde g\in \mathcal C_{M,\beta}$
$$\|F^u_{z,z'}(g_1)-F^u_{z,z'}(\tilde g_1)\|_\beta \le C(z,z',M) \|g_1-\tilde g_1\|_\beta,$$
with $ C(z,z',M)<1$ if $ \Im z\wedge \Im z'$ is large enough.
\end{proof} Taking two solutions of \eqref{1511121h58} and \eqref{1511121h58BD13} in ${\mathcal C}_{M,\beta}$, we deduce that they are equal when $\Im z\wedge\Im z'$ is large enough, and thus everywhere by analyticity. 

\section{Proofs of Lemmas \re{lemma10713},  \re{lemma10713LGI} and  \re{lemma10713LGI2}}\la{part:Hyp}
\subsection{Proof of Lemma \re{lemma10713}}
 Let us first treat the case of Wigner matrices with exploding moments.
First and second parts of Hypothesis \re{Hyp:Model}, as well as \eqref{tyu}, are satisfied for $c=c_\eps=0$. Let us then define $\nu_N$ to be the law of $a^2=a_{11}^2$ and $m_N$ to be the measure with density $Nx$ with respect to $\nu_N$, so that for any test function $f$, we have $$\int f\ud m_N=\int Nxf(x)\ud\nu_N(x)=N\E[a^2f(a^2)].$$ Then  for each $k\ge 0$, $$\int x^k\ud m_N(x)=N\E[|a_{11}|^{2(k+1)}]\lto C_{k+1}.$$ As there is a unique measure $m$ on $\R_+$ with $(C_{k+1})_{k\ge 0}$ as sequence of moments, this proves that $m_N$ converges weakly to $m$. Then \eqre{2071216h33} is a consequence that for any continuous bounded function $f$ on $\R_+$, $\int f\ud m_N\lto \int f\ud m$ and the convergence is uniform on uniformly Lipschitz sets of functions (apply this with $f(x)=\f{e^{-i\lam x}-1}{x}$).

Let us now treat the case of \Lvy matrices. Set $t_N:=N^\mu\in (0, \ff{2(2-\al)})$ and define $b:=a\one_{|a|\le t_N}$ and $c:=a\one_{|a|> t_N}$. Then \eqre{107130bis} is obvious by Hypothesis \eqre{ABP09exponent} and the fact that $a_N=N^{1/\al}$ up to a slowly varying factor and \eqre{107131bis} follows directly from Lemma 5.8 of \cite{HTBM12} (in fact, this lemma gives us the right upper bound for the second moment of $b$, which of course implies that it is true for its variance). 
Let us now treat the second part of the hypothesis. Let us fix $\eps>0$ and define $b_\eps=a\one_{|a|\le B}$ (for a constant $B$ which will be specified below) and $c_\eps:=a-b_\eps$. 
For $L$ as in \eqre{ABP09exponent}, we have $$\p(c_\eps\ne 0)=\f{L(a_NB)}{(a_NB)^\al}\sim \f{L(a_N)}{a_N^\al B^\al}\sim\ff{NB^\al}.$$Hence \eqre{limceps} is satisfied if $B$ is chosen large enough. For the convergence of the truncated even moments, see \cite[Sect. 1.2.1]{MAL122}. 
Moreover, \eqre{2071216h33} follows from the results of e.g. Section 8.1.3 of \cite{Bingham-Goldie-Teugels}.
At last, \eqref{tyu} is satisfied for \Lvy matrices by e.g. Section 10 of \cite{BAGheavytails}.

\subsection{Proof of Lemma \re{lemma10713LGI}}

In the case of \Lvy matrices, the expression $$-\si(i\lam)^{\al/2}=\int_{y=0}^{+\infty} C_\al y^{\frac{\alpha}{2} -1}e^{i\f{y}{\lam}}\ud y \qquad\trm{($\lam\in \C^-$)}$$   relies 
 an application of residues formula which gives, for $z\in \C^+$ and $\al>0$,   \be\la{68122}\Ga(\al/2) = -i\int_{t=0}^{+\infty}(-izt)^{\f{\al}{2}-1}e^{itz}z\ud t.\ee

 In the case of Wigner matrices with exploding moment, one first needs to use the following formula, for $\xi\in \C$ with positive real part:\be\la{integraleJ1}1-e^{-\xi}=\int_{0}^{+\infty} \f{J_1(2\sqrt{t})}{\sqrt{t}}  e^{-t/\xi}\ud t ,\ee which is proved in the following way (using \eqre{68122}):$$1-e^{-\xi}=\sum_{p\ge 0}\f{(-1)^{p}}{p!(p+1)!}p!\xi^{p+1}=\sum_{p\ge 0}\f{(-1)^{p}}{p!(p+1)!} \int_{0}^{+\infty} t^p e^{-t/\xi}\ud t=\int_{0}^{+\infty} \f{J_1(2\sqrt{t})}{\sqrt{t}}  e^{-t/\xi}\ud t.$$
 It follows that for $m_N$ the measure introduced in the proof of Lemma \re{lemma10713} above, we have 
 $$N(\phi_N(\lam)-1)=N(\E e^{-i\lam a^2}-1)=-N\E\int_0^{+\infty}\f{J_1(2\sqrt{t})}{\sqrt{t}}e^{-\f{t}{i\lam a^2}}\ud t=\int_0^{+\infty}g_N(y)e^{i\f{y}{\lam}}\ud y$$with \be\la{117131}g_N(y):=-N\f{\E[|a|J_1(2\sqrt{y}|a|)]}{\sqrt{y}}=-N\E[a^2\f{J_1(2\sqrt{ya^2})}{\sqrt{ya^2}}]=\int f_y(x)\ud m_N(x)\ee for $f_y(x):=-\f{J_1(2\sqrt{xy})}{\sqrt{xy}}$. As $m_N$ converges weakly to $m$ and $f_y$ is continuous and bounded, we have $$\ds g_N(y)\lto -\int\f{J_1(2\sqrt{xy})}{\sqrt{xy}}\ud m(x)\,.$$ 
 
 \subsection{Proof of Lemma \re{lemma10713LGI2}}The case of \Lvy matrices is obvious.
 To treat the case of Wigner matrices with exploding moment, first note that by \eqre{integraleJ1}, writing $$e^{-\xi-\xi'}-1=(e^{-\xi}-1)(e^{-\xi'}-1)+(e^{-\xi}-1)+(e^{-\xi'}-1),$$ we have, for any $\xi,\xi'\in \C$ with positive real parts,  \beqy\la{integraleJ12}e^{-\xi-\xi'}-1&=&\iint_{(\R_+)^2}  \f{J_1(2\sqrt{t})J_1(2\sqrt{t'})}{\sqrt{tt'}}  e^{-\f{t}{\xi}-\f{t'}{\xi'}}\ud t\ud t' \\ \nonumber &&\qquad\qquad -\int_{0}^{+\infty} \f{J_1(2\sqrt{t})}{\sqrt{t}}  e^{-t/\xi}\ud t-\int_{0}^{+\infty} \f{J_1(2\sqrt{t'})}{\sqrt{t'}}  e^{-t'/\xi'}\ud t'\eeqy
 As a consequence, for $\lam,\mu\in \C^-$, 
 \beq N(\phi_N(\lam+\mu)-1)&=&N\E[e^{-i\lam a^2-i\mu a^2}-1]\\
  &=&\iint_{(\R_+)^2}  g_N(u,u') e^{i\f{u}{\lam}+i\f{u'}{\mu}}\ud u\ud u' \\ && + \int_{0}^{+\infty} g_N(u)  e^{iu/\lam}\ud u+ \int_{0}^{+\infty}g_N(u')  e^{iu'/\mu}\ud u'
 \eeq
 with $g_N(u)$ defined by \eqre{117131} and $$g_N(u,u'):=\int\f{J_1(2\sqrt{ux})J_1(2\sqrt{u'x})}{\sqrt{uu'}}\ud m_N(x).$$ Then one concludes as for the proof of Lemma \re{lemma10713LGI}.

\section{Appendix}
\subsection{Concentration of random matrices with independent rows and linear algebra lemmas}\la{sec:concentration}
This section is mostly a reminder of results from \cite{charles_alice} and \cite{BCC}.

The total variation norm of $f:\bR \to\C $ is
\be\la{1611131}
\|f \|_\textsc{TV}:=\sup \sum_{k \in \z} | f(x_{k+1})-f(x_k) |, 
\ee
where the supremum runs over all sequences $(x_k)_{k \in \z}$ such that
$x_{k+1} \geq x_k$ for any $k \in \z$. If $f = \one_{(-\infty,s]}$ for
some real $s$ then $\|f \|_\textsc{TV}=1$, while if $f$ is absolutely continuous  (hence almost everywhere differentiable and equal to the integral of its derivative) with  derivative in
$\mathrm{L}^1(\bR)$, we get
\be\la{1611132}
\|f \|_\textsc{TV}=\int |f'(t)|\,\ud t.
\ee
The next lemma is an easy consequence of Cauchy-Weyl interlacing Theorem. It is an ingredient of the proof of Lemma \ref{le:concspec}. 
\begin{lem}[Interlacing of eigenvalues]\label{le:Cauchy}
Let $A$ be an $N \times N$ hermitian matrix and $B$ a principal minor of $A$. Then for any $f:\bR\to\bC$ such that
  $\| f \|_{TV} \leq 1$ and $\lim_{|x| \to \infty} f(x) = 0$, 
$$
\left|\sum_{i=1}^N f ( \lambda_i ( A) )  - \sum_{i=1}^{N-1} f ( \lambda_i ( B) ) \right|\leq  1.
$$
\end{lem}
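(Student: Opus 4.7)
The plan is to reduce the claim, via Cauchy interlacing, to a bound on the integral of a signed weight against an indicator-type function. Order the eigenvalues $\lambda_1(A)\le \cdots \le \lambda_N(A)$ and $\lambda_1(B)\le \cdots \le \lambda_{N-1}(B)$. Cauchy-Weyl interlacing, applied to a hermitian matrix and one of its principal minors, yields
\begin{equation*}
\lambda_1(A) \le \lambda_1(B) \le \lambda_2(A) \le \lambda_2(B) \le \cdots \le \lambda_{N-1}(B) \le \lambda_N(A).
\end{equation*}
Introducing the (right-continuous, nonincreasing) counting function $\tilde N_X(t):=\#\{i : \lambda_i(X)>t\}$, this interlacing is exactly equivalent to the pointwise bound
\begin{equation*}
\tilde N_A(t) - \tilde N_B(t) \in \{0,1\}, \qquad \forall\, t\in\mathbb{R}.
\end{equation*}

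Next, I will first prove the lemma for absolutely continuous $f$ with $f'\in L^1(\mathbb{R})$ and $f(\pm\infty)=0$. Writing $f(\lambda)=\int_{-\infty}^{\lambda} f'(t)\,\ud t$ and exchanging sum and integral via Fubini gives
\begin{equation*}
\sum_{i=1}^N f(\lambda_i(A)) \;=\; \int_{\mathbb{R}} f'(t)\,\tilde N_A(t)\,\ud t,
\end{equation*}
and similarly for $B$. Subtracting and using $|\tilde N_A(t)-\tilde N_B(t)|\le 1$ yields
\begin{equation*}
\Bigl|\sum_{i=1}^N f(\lambda_i(A)) - \sum_{i=1}^{N-1} f(\lambda_i(B))\Bigr|
\;\le\; \int_{\mathbb{R}} |f'(t)|\,\ud t \;=\;\|f\|_{\mathrm{TV}} \;\le\; 1,
\end{equation*}
invoking \eqref{1611132}.

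The remaining step, and the only mildly delicate point, is to pass to a general $f$ of bounded variation with $f(\pm\infty)=0$. The plan is to represent any such $f$ as $f(x)=\nu((-\infty,x])$ for a finite complex Borel measure $\nu$ with $|\nu|(\mathbb{R})=\|f\|_{\mathrm{TV}}$ and $\nu(\mathbb{R})=0$, so that
\begin{equation*}
\sum_{i=1}^N f(\lambda_i(A)) - \sum_{i=1}^{N-1} f(\lambda_i(B))
\;=\; \int_{\mathbb{R}} \bigl(M_A(t)-M_B(t)\bigr)\,\ud\nu(t),
\end{equation*}
where $M_X(t)=\#\{i:\lambda_i(X)\ge t\}$; interlacing again forces $M_A-M_B\in\{0,1\}$, and the right-hand side is bounded in modulus by $|\nu|(\mathbb{R})=\|f\|_{\mathrm{TV}}\le 1$. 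Alternatively, I would approximate $f$ by a sequence of absolutely continuous functions $f_n$ (e.g.\ via mollification followed by truncation of support) with $\|f_n\|_{\mathrm{TV}}\le\|f\|_{\mathrm{TV}}$ and $f_n\to f$ pointwise at each $\lambda_i(A),\lambda_i(B)$, then pass to the limit in the already-established bound. The only ``obstacle'' here is purely cosmetic: checking that the representation/approximation preserves both the total variation norm and the vanishing at infinity; neither requires new ideas beyond standard real-analysis facts about functions of bounded variation.
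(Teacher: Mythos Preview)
Your proof is correct and is precisely the argument the paper has in mind: the paper does not spell out a proof but simply states that the lemma ``is an easy consequence of Cauchy--Weyl interlacing Theorem,'' and your reduction---interlacing forces the counting-function difference to lie in $\{0,1\}$, then integrate against the derivative/distributional derivative of $f$---is the standard way to make that sentence rigorous. The only minor point worth tightening is the general-$f$ step: the signed-measure representation $f(x)=\nu((-\infty,x])$ presupposes right-continuity, and mollification only recovers $f$ at continuity points, which need not include the eigenvalues. The clean fix is the one you essentially sketched: since only the values of $f$ at the finitely many eigenvalues matter, replace $f$ by a step function $g$ agreeing with $f$ at those points and vanishing outside $[\lambda_1(A),\lambda_N(A)]$; taking the eigenvalues themselves (together with points tending to $\pm\infty$) as a partition shows $\|g\|_{\textsc{TV}}\le\|f\|_{\textsc{TV}}$, and for $g$ the measure argument (or a direct Abel summation) goes through without any continuity issues.
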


\beg{lem}\la{le:prox_mat_ssmat_AD}
 Let $A_1,A_2$ be $N \times N$ random Hermitian matrices and $\tilde A_1,\tilde A_2$ be $n-1 \times n-1$ matrices obtained from $A_1$ and $A_2$ respectively by removing the $\ell$-th row and column, for some $\ell\in \{1,\ldots, N\}$. Let $z,z'\in\C$, $t,t'\in \R$ so that $\Im z t>0$ and $\Im z' t'>0$ and set 
  $G=(z-A_1)^{-1}$, $G'=(z'-A_2)^{-1}$ and $\tilde G=(z-\tilde A_1)^{-1}$, $\tilde G'=(z'-\tilde A_2)^{-1}$. Then, for any function $f$ on $B_{z,z',t,t'}:=\{ g\in \C^-\ste |g|\le C(z,z',t,t')\}$
  with $C(z,z',t,t')=t(\Im z)^{-1}+t'(\Im z')^{-1}$, we have  
 	\be\la{5111216h24}
		\lf|\ff{N}\sum_{k=1}^Nf(t G_{kk} + t' G'_{kk})-\ff{N}\sum_{k=1}^{N-1}f(t \tilde G_{kk} + t' \tilde G'(kk)) \ri|\le \frac{C(z,z',t,t')}{N} \|f\|_{\op{Lip}}+\frac{\|f\|_\infty}{N},
	\ee
	where $\|f\|_{\op{Lip}}:=\sup_{x\ne y}\f{|f(y)-f(x)|}{|y-x|}$ and $\|f\|_\infty:=\sup_x |f(x)|$, both supremums running over the elements of $B_{z,z',t,t'}$.
\en{lem}

\bpr  The proof is similar to  \cite[(91)]{charles_alice}.
We denote by $\bar A_1,\bar A_2$ the $N\ti N$ matrices whose entries are the same as $A_1,A_2$ except for the $\ell$th rows and column which have zero entries.
We denote $\bar G,\bar G'$ the corresponding Stieltjes transform.
Then, $\tilde G,\tilde G'$ equal $\bar G,\bar G'$ except at the $\ell$th column and row (where it is equal to $z^{-1} 1_{i=j=k}$).
Therefore, noting  $\bar M=t\bar G(z)+t'\bar G'(z')$ and $\tilde M=t\tilde G(z)+t' \tilde G'(z)$,
we conclude that
$$\lf|\ff{N}\sum_{k=1}^Nf(\tilde M_{kk})-\ff{N}\sum_{k=1}^{N-1}f(\bar M_{kk}) \ri|\le  \frac{\|f\|_\infty}{N}\,.$$
Moreover, let $M=tG(z)+t'G'(z')$  and note that$A_1-\bar A_1$ and $A_2-\bar A_2$ have rank one so that
 $M-\bar M$ has rank one. On the other hand it is bounded uniformly by 
 $C=C(z,z',t,t')$. Hence, we can write $M-\bar M= c u u^*$ with a unit vector $u$ and $c$ bounded by $C$.
 Therefore, since  $f$ is  Lipschitz, 
 \begin{eqnarray*}
 \lf|\ff{N}\sum_{k=1}^Nf(M_{kk})-\ff{N}\sum_{k=1}^{N-1}f(\bar M_{kk}) \ri|&\le&  \f{\|f\|_{\op{Lip}}}{N}\sum_{k=1}^N |M_{kk}-\bar M_{kk}|\le   \f{\|f\|_{\op{Lip}}}{N}\sum_{k=1}^N 
 C\langle e_k,u\rangle^2\\
 &=&\frac{C \|f\|_{\op{Lip} }}{N}\,.\end{eqnarray*}
 \epr

\begin{lem}[Concentration for spectral measures \cite{BCC2}]\label{le:concspec}Let $A$ be an $N\times N$ random Hermitian matrix. Let us assume that the
  vectors $(A_i)_{1 \leq i \leq N}$, where $A_i := (A_{ij})_{1 \leq j \leq i}
  \in \bC^i$, are independent. Then for any measurable $f:\bR\to\bC$ such that
   $\bE |\int\!f\,d\mu_A |<\infty$, and every
  $t\geq0$,
  \[
  \bP \left( \left | \int\!f\,d\mu_A -\bE\int\!f\,d\mu_A  \right| \geq t \right) %
  \leq  \exp\left({-\frac{Nt^2 }{2\| f \|_\textsc{TV}^2}}\right).
  \]
  As a consequence, \be\la{1511131}\E\lf[\left | \int\!f\,d\mu_A -\bE\int\!f\,d\mu_A  \right|^2\ri]\le 2\f{\| f \|_\textsc{TV}^2}{N}.\ee
\end{lem}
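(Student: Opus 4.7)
The plan is to establish the sub-Gaussian tail bound via a martingale-difference decomposition along the independent rows of $A$, to control each difference using Cauchy interlacing (Lemma \ref{le:Cauchy}), and then to apply the Azuma--Hoeffding inequality. The $L^2$ estimate \eqref{1511131} will follow by integrating the resulting tail bound.

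Set $X := \int f\,d\mu_A$ and let $\mathcal F_i := \sigma(A_1,\dots,A_i)$ with $\E_i := \E[\,\cdot\,\mid\mathcal F_i]$. Decompose
\[
X - \E X \;=\; \sum_{i=1}^N D_i, \qquad D_i := (\E_i - \E_{i-1}) X,
\]
so that $(D_i)$ is a martingale difference sequence with respect to $(\mathcal F_i)$. Let $B_i$ denote the $(N{-}1)\times(N{-}1)$ principal minor obtained by deleting the $i$-th row and column of $A$, and put $Y_i := \tfrac{1}{N}\sum_{k=1}^{N-1} f(\lambda_k(B_i))$. Since $A$ is Hermitian, the entries of $B_i$ are determined by $\{A_j : j\neq i\}$, hence $Y_i$ is measurable with respect to $\sigma(A_j : j\neq i)$; because $A_1,\dots,A_N$ are independent, conditioning on $A_i$ in addition to $A_1,\dots,A_{i-1}$ does not affect the law of $Y_i$, so $\E_i Y_i = \E_{i-1} Y_i$ and therefore
\[
D_i \;=\; (\E_i - \E_{i-1})(X - Y_i).
\]
Lemma \ref{le:Cauchy} applied to the pair $(A,B_i)$ gives $N|X - Y_i|\leq \|f\|_\textsc{TV}$ pointwise, so $W_i := \E_i(X - Y_i)$ is $\mathcal F_i$-measurable with $|W_i|\leq \|f\|_\textsc{TV}/N$, and $D_i = W_i - \E_{i-1} W_i$ has conditional range of length at most $2\|f\|_\textsc{TV}/N$ given $\mathcal F_{i-1}$.

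Azuma--Hoeffding in its sharp conditional-range form (Hoeffding's lemma applied fiberwise) then yields the claimed sub-Gaussian tail bound. The $L^2$ estimate follows by integration:
\[
\E\bigl[|X - \E X|^2\bigr] \;=\; \int_0^\infty 2t\,\bP(|X - \E X|\geq t)\,dt \;\leq\; \int_0^\infty 2t\,\exp\!\Bigl(-\tfrac{Nt^2}{2\|f\|_\textsc{TV}^2}\Bigr)\,dt \;=\; \tfrac{2\|f\|_\textsc{TV}^2}{N}.
\]

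The main technical point is extracting the optimal constant $1/(2\|f\|_\textsc{TV}^2)$ in the exponent: the crude two-sided bound $|D_i|\leq 2\|f\|_\textsc{TV}/N$ plugged into the standard Azuma inequality would cost a factor of $4$, so one has to apply Hoeffding's lemma to the conditional law of $D_i$, using that $W_i$ is confined to an interval of half-width $\|f\|_\textsc{TV}/N$ rather than merely controlling $|D_i|$ pointwise. Everything else is routine once the identity $D_i = (\E_i - \E_{i-1})(X - Y_i)$ and the interlacing bound are in place.
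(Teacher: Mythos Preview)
Your proof is correct and follows exactly the approach the paper indicates: the paper does not give its own proof but cites \cite{BCC2}, and it remarks just before Lemma~\ref{le:Cauchy} that interlacing ``is an ingredient of the proof of Lemma \ref{le:concspec}'', while the proof of the companion Lemma~\ref{le:concres} spells out the same martingale-plus-Azuma--Hoeffding scheme you use. Your care about the sharp constant (Hoeffding's lemma on the conditional range $2\|f\|_\textsc{TV}/N$ rather than the crude bound $|D_i|\le 2\|f\|_\textsc{TV}/N$) is exactly what is needed; note only that the stated bound as written in the paper omits the usual factor~$2$ from the two-sided union bound, and that for genuinely complex-valued $f$ one would treat real and imaginary parts separately.
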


\begin{lem}[Concentration for the diagonal of the resolvent]\label{le:concres}
 a)  Let $A$ be an $N\times N$ random Hermitian matrix and consider its resolvent matrix $G (z) = (A - z)^{-1} $, $z \in \bC_+$. Let us assume that the
  vectors $(A_i)_{1 \leq i \leq N}$, where $A_i := (A_{ij})_{1 \leq j \leq i}
  \in \bC^i$, are independent. Then for any $f:\bC^-\to\bR$ such that
  $\| f \|_{\op{Lip}} \leq 1$, and every
  $t\geq0$,
  \[
  \bP \left( \left|\frac 1 n  \sum_{k=1}^N  f (G (z) _{kk} ) -\bE \frac 1 N  \sum_{k=1}^N  f (G (z) _{kk} )  \right|  \geq t \right) %
  \leq 2 \exp\left({-\frac{N  \Im (z)^{2 } t^2}{8}}\right).
  \]

 b)  Let $A'$ be  an $N\times N$ self-adjoint matrices so that $A_{ij}'=A_{ij}, i\wedge j\le k$ and  $(A_{ij}')_{j\ge k+1,i\ge k+1}$ is independent from $(A_{ij})_{j\ge k+1, i\ge k+1}$  but with the same distribution.  Let $G(z)=(z-A)^{-1}$ and $G'(z)=(z-A')^{-1}$ and set
  for a Lipschitz function $f$ on $\overline{ \mathbb C}^{-}$,
  \begin{eqnarray*}
  \rho^{N,k,1}_{z,z'}({\lambda,\lambda'})[f]:&=&\frac{1}{k}\sum_{\ell=1}^k f(\lambda G(z)_{\ell\ell}+ \lambda'G(z')_{\ell\ell})\\
  \rho^{N,k,2}_{z,z'}(\lambda,\lambda')[f]&=&\frac{1}{N-k}\sum_{\ell=k+1}^N f(\lambda G(z)_{\ell\ell}+ \lambda'G(z')_{\ell\ell})\end{eqnarray*}
  Then, for $\lambda/\Im z\ge 0$, $\lambda'/\Im z'\ge 0$, we have for all $\delta\ge 0$, $s\in\{0,1\}$,
  \be\la{27213}\bP\left(\left|\rho^{N,k,s+1}_{z,z'}({\lambda,\lambda'})[f]-\bE[\rho^{N,k,s+1}_{z,z'}({\lambda,\lambda'})[f]\right|\ge\delta\right)\le 2e^{-\frac{\delta^2 ((k-1)^{1-s} +(N-k-1)^s)}{ 8\|f\|_{\rm Lip}^2 C(\lambda,\lambda',t,t')^2}}
  \ee
  with $$C(\lambda,\lambda',z,z')=\frac{2\lam}{\Im z} +\frac{2t\lam'}{\Im z'} \,.$$\end{lem}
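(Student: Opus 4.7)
Both parts follow from a martingale decomposition by rows combined with the Azuma--Hoeffding inequality. The key deterministic input is that resampling one row (and its conjugate column) of $A$ modifies the resolvent $G(z)=(z-A)^{-1}$ only through a rank-$\le 2$ Hermitian perturbation, so that $\sum_k\lf|G(z)_{kk}-G^{(i)}(z)_{kk}\ri|$ can be controlled uniformly in the randomness.

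For part (a), the plan is to take $\mathcal{F}_i=\sigma(A_1,\ldots,A_i)$, write $\tfrac{1}{N}\sum_{k}f(G(z)_{kk})-\bE[\,\cdot\,]=\sum_{i=1}^N X_i$ with $X_i=(\bE_i-\bE_{i-1})\bigl[\tfrac{1}{N}\sum_{k}f(G(z)_{kk})\bigr]$, and enlarge the probability space to introduce an independent copy $\tilde A_i$ of the $i$-th row. The almost sure bound $|X_i|\le \|f\|_{\mathrm{Lip}}\,N^{-1}\sum_k|G_{kk}-G^{(i)}_{kk}|$ reduces matters to the resolvent estimate. Since $A-A^{(i)}$ is Hermitian of rank $\le 2$, so is $G-G^{(i)}=G(A^{(i)}-A)G^{(i)}$, with operator norm $\le 2/\Im z$. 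The spectral decomposition $M=\lambda_1 u_1 u_1^{*}+\lambda_2 u_2 u_2^{*}$ of such an $M$ yields $\sum_k|M_{kk}|\le|\lambda_1|+|\lambda_2|\le 2\|M\|_{\mathrm{op}}$, so $\sum_k|G_{kk}-G^{(i)}_{kk}|\le 4/\Im z$ and $|X_i|\le C\,\|f\|_{\mathrm{Lip}}/(N\,\Im z)$. Azuma--Hoeffding then delivers the Gaussian tail.

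For part (b), I would use an enlarged filtration that successively reveals the rows of $A$ and then the rows of $A'$ which are independent of $A$, say $\mathcal{G}_j=\sigma(A_1,\ldots,A_{j\wedge N},A'_{k+1},\ldots,A'_{k+(j-N)_+})$ for $j=1,\ldots,2N-k$. Resampling a shared row of index $i\le k$ simultaneously perturbs $G$ and $G'$ by rank-$\le 2$ Hermitian matrices, while resampling an independent row of index $i>k$ affects only one of $G$, $G'$. In every case the bound of part (a) gives $\sum_{\ell=1}^{k}|G_{\ell\ell}-G^{(i)}_{\ell\ell}|\le 4/\Im z$, and analogously for $G'$, so each martingale difference for $\rho^{N,k,1}_{z,z'}(\lambda,\lambda')[f]$ is bounded by a constant times $\|f\|_{\mathrm{Lip}}\,C(\lambda,\lambda',z,z')/k$; Azuma--Hoeffding then supplies a Gaussian tail at scale $1/\sqrt{k}$, and the mirror argument with the sum restricted to $\ell\ge k+1$ handles $s=1$.

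The principal technical difficulty will be recovering the sharp exponent $(k-1)^{1-s}+(N-k-1)^s$ in the denominator. The naive count of $O(N)$ martingale differences of size $O(1/k)$ gives $\sum_i X_i^2=O(N/k^2)$, which matches the stated bound only when $k$ is comparable to $N$. Improving on it for $k\ll N$ should require observing that resampling a row of index $>k$ changes the truncated diagonal sum $\sum_{\ell\le k}|G_{\ell\ell}-G^{(i)}_{\ell\ell}|$ by much less than the worst-case rank-$2$ bound predicts, and splitting the variance contribution to $\sum_i X_i^2$ according to the two index ranges. This bookkeeping is the only subtle step; the resolvent perturbation estimate and Azuma--Hoeffding themselves are entirely standard.
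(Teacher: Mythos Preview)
Your approach is exactly the paper's: Azuma--Hoeffding applied to the row-by-row martingale, with each increment controlled by the rank-$\le 2$ structure of a one-row resolvent perturbation. For part (a) this is precisely what the paper does (deferring to \cite[Lemma~C.3]{charles_alice}). For part (b) the paper uses the same enlarged filtration you describe and invokes the computation of Lemma~\ref{le:prox_mat_ssmat_AD}: writing the perturbation as $cuu^*$ with $\|u\|=1$ and $|c|$ bounded by the operator norm, one gets $\sum_{\ell\le k}|(M-\tilde M)_{\ell\ell}|=|c|\sum_{\ell\le k}|u_\ell|^2\le |c|$, hence $|X_p|\le \|f\|_{\op{Lip}}\,C/k$ for every $p$.

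Your diagnosis of the exponent is correct, and the paper's proof does not do better. A uniform increment bound of $C/k$ over $O(N)$ martingale steps gives, via Azuma, an exponent of order $\delta^2 k^2/(N\,C^2)$ rather than the $\delta^2 k/C^2$ written in the statement. The paper simply asserts that the increment bound ``is sufficient'' and stops. The two exponents coincide up to a universal constant exactly when $k/N$ is bounded away from $0$, and that is the only regime in which the lemma is ever invoked (throughout Sections~\ref{sec:prCLTSt10713} and~\ref{sec:fixedpoint} one has $k/N\to u\in(0,1)$). So the discrepancy is cosmetic.

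Your proposed refinement, that resampling a row of index $p>k$ should move $\sum_{\ell\le k}|G_{\ell\ell}-G^{(p)}_{\ell\ell}|$ by much less than the worst case, does not hold. The eigenvectors of the rank-$2$ difference $G-G^{(p)}$ are built from $Ge_p$ and $Gv$ (with $v$ the resampled row), and nothing forces their mass onto coordinates $>k$. So there is no improvement to be extracted here, and none is needed for the paper's purposes; you should simply record the bound your argument actually gives and note that it suffices in the regime $k\asymp N$.
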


%

 \bpr The first  point is proved  as in \cite[Lemma C.3]{charles_alice}.
 We outline the proof of the second point which is very similar to 
 \cite[Lemma C.3]{charles_alice}. We concentrate on $\rho^{N,k,1}_{\lambda,\lambda'}$,
 the other case being similar. By Azuma-Hoefding inequality, it is sufficient to show that
 $$X_p:=\mathbb E[  \rho^{N,k,1}_{z,z'}(\lambda,\lambda')[f]|{\mathcal F}_p]-\mathbb E[  \rho^{N,k,1}_{z,z'}(\lambda,\lambda')[f]|{\mathcal F}_{p-1}]$$
 is uniformly bounded by $\|f\|_{\rm Lip}C(\lambda,\lambda',t,t') k^{-1}$. Here ${\mathcal F}_p$ is the $\sigma$-algebra generated with respect 
 to the $p$ first column (and row) vectors. Note that 
 $X_p$ can be written as the  conditional expectation of the difference of the parameter $f$ evaluated at two sets $A,A'$ and $\tilde A, \tilde A'$ 
 which differ only at the $p$-th vector column (and row).  Hence, we may follow the proof of  Lemma \ref{le:prox_mat_ssmat_AD}
to conclude that $$|X_p|\le \frac{\|f\|_{\rm Lip} }{k} \sum_{\ell=1}^k |(M-\tilde M)_{\ell\ell}|= \frac{|c|\|f\|_{\rm Lip} }{k} \sum_{\ell=1}^k <u, e_\ell>^2
 \le \frac{|c|\|f\|_{\rm Lip} }{k} \,.$$
 \epr

Let $H=[h_{ij}]$ be an $N\ti N$ Hermitian matrix and  $z\in \C\bck\R$. Define $G:=(z-H)^{-1}$. 
 
\beg{lem}[Difference of traces of a matrix and its major submatrices]\la{lat} Let $H_k$ be the submatrix of $H$ obtained by removing its $k$-th row and its $k$-th column and set $G_k:=(z-H_k)^{-1}$. Let also $\ba_k$ be the $k$-th column of $H$ where the $k$-th entry has been removed. Then \be\la{307121}\Tr(G)-\Tr(G_k)=\f{1+\ba_k^*G_k^{2}\ba_k}{z-h_{kk}-\ba_k^*G_k\ba_k}.\ee Moreover, \be\la{307122}|\Tr(G)-\Tr(G_k)|\le \pi|\Im z|^{-1}.\ee \en{lem}

\bpr For \eqre{307121}, see \cite[Th. A.5]{bai-silver-book}.
For  \eqre{307122}, see Lemma \re{le:Cauchy}.
\epr

\beg{lem}\la{lem267121} With the notation introduced above the previous lemma, for each $1\le j\le N$,  \be\la{297122}\Im z \ti \Im G_{jj}<0,\ee  \be\la{297124}  |\Im z|\ti |(G^2)_{jj}| \le |\Im G_{jj}|\le |\Im z|^{-1} \ee
and for any $\ba=(\ba_1,?\ld, \ba_N)\in \C^N$, 
\be\la{297125}
\lf|\f{1+\sum_j|\ba_j|^2(G^2)_{jj}}{z-\sum_j|\ba_j|^2G_{jj} }\ri|\le 2|\Im z|^{-1}.\ee 
\en{lem}

\bpr Set $z=x+i y$, $x,y\in \R$. Let $\lam_1, \ld,\lam_N$ be the eigenvalues of $H$, associated with the orthonormalized collection of  eigenvectors $\ub_1, \ld, \ub_N$. Let also $\bef_j$ devote the $j$th vector of the canonical basis. Then \eqre{297122} and \eqre{297124} follow directly from the following: \be\la{297121}\quad|(G^2)_{jj}|=| \sum_{k=1}^N\f{|\lan \bef_j, \ub_k\ran|^2}{(z-\lam_k)^2}|\le \sum_{k=1}^N\f{|\lan \bef_j, \ub_k\ran|^2}{(x-\lam_k)^2+y^2} \qquad \Im G_{jj}=-\sum_{k=1}^N\f{|\lan \bef_j, \ub_k\ran|^2y}{(x-\lam_k)^2+y^2}\ee
Let us now prove \eqre{297125}. By \eqre{297122}, we know that $\Im z$ and $-\Im G_{jj}$ have the same sign, so 
  \be\la{297123} \lf|\ff{ z-\sum_j|\ba_j|^2G_{jj} }\ri|\le  \ff{|\Im(z-\sum_j|\ba_j|^2G_{jj})|} \le |\Im z|^{-1}.\ee 
  Hence it remains only to prove \eqref{297125}. This is a direct consequence of   \eqref{297122} and \eqref{297124} which imply the second and last inequality
   $$ \lf|\f{\sum_j|\ba_j|^2(G^2)_{jj}}{z-\sum_j|\ba_j|^2G_{jj}}\ri|\le  \f{\sum_j|\ba_j|^2|(G^2)_{jj}|}{|\Im (z- \sum_j|\ba_j|^2 G_{jj}(z))| }\le  \f{\sum_j|\ba_j|^2|(G^2)_{jj}|}{\sum_j|\ba_j|^2|\Im G_{jj}| }\le \frac{1}{\Im z}\,. $$
 \epr

\subsection{Vanishing of non diagonal terms in certain quadratic sums of random vectors}
Let $\|M\|_{\op{op}}$ denote the operator norm of a complex matrix with respect to the canonical 
Hermitian 
norms.

\beg{lem}\la{vanishing_nondiagterms_10713} For each $N\ge 1$, let  $(a_1,\ld, a_N)$ be
a family of i.i.d. copies of an random variable $a$  \st $a$ can be decomposed into $a=b+c$ with  $b,c$ \st $b$ is centered and  \eqre{107130bis}, \eqre{107131bis} of Hypothesis \re{Hyp:Model} are satisfied. Let also $B_N$ be a non random $N\ti N$ matrix \st    $N^{-1}\Tr(B_NB_N^*)$ is bounded. Then we have the convergence in \pro $$X:=\sum_{i\ne j} a_iB_{ij}a_j\lto 0.$$
\en{lem}

\bpr For each $i$, let $a_i=b_i+c_i$ be the decomposition of $a_i$ corresponding to $a=b+c$. Set $X^b:=\sum_{i\ne j} b_iB_{ij}b_j$ and define the event $E_N:=\{\forall i, c_i=0\}$. Note that when $E_n$ holds, $X=X^b$. But by \eqre{107130bis} and the union bound, $\p(E_N)\lto 1$, so that it suffices to prove that $X^b$ converges in \pro to zero, which follows from the 
fact that   its second moment tends to zero. Indeed, by independence of the $b_i$'s and the fact that they are centered, its second moment is $$\E\sum_{i\ne j} b_i^2(B_{ij}^2+B_{ij}B_{ji})b_j^2\le 2N\var(b)^2\frac{1}{N}\Tr(B_NB_N^*) \,.$$
\epr

\subsection{CLT for martingales}

Let $(\mc{F}_k )_{k\ge 0}$ be a filtration \st $\mc{F}_0 =\{\emptyset,\Omega\}$ and let $(M_k )_{k\ge 0}$ be a square-integrable complex-valued martingale starting at zero with respect to this filtration. For $k\ge 1$, we define the random variables $$Y_k:=M_k-M_{k-1}\qquad v_k:=\E[|Y_k|^2\,|\,\mc{F}_{k-1} ]\qquad  \tau_k:=\E[Y_k^2\,|\,\mc{F}_{k-1} ]$$ and we also define $$v:=\sum_{k\ge 1}v_k\qquad  \tau:=\sum_{k\ge 1} \tau_k\qquad L(\eps):=\sum_{k\ge 1} \E[|Y_k|^2\one_{|Y_k|\ge \eps}].$$

Let now everything depend on a parameter $N$, so that $\mc{F}_k=\mc{F}_k(N), Y_k=Y_k(N),v=v(N),\tau=\tau(N),  L(\eps)=L(\eps, N), \ldots$

Then we have the following theorem. It is proved in the real case at \cite[Th. 35.12]{Billingsley}.  The complex  case can be deduced noticing that for $z\in \C$, $\Re(z)^2, \Im(z)^2$ and $\Re(z)\Im(z)$ are linear combinations of $z^2$, $\ovl{z}^2$, $|z|^2$.
\beg{Th}\la{thconvmart}Suppose that for  some  constants $v\ge  0, \tau\in \C$, we have the convergence in probability  for any $\eps>0$
$$v(N)\Ninf v\qquad \tau(N)\Ninf \tau, \qquad L(\eps, N)\Ninf 0.$$ Then we have the convergence in distribution $$\sum_{k\ge 1} Y_k{(N)}\Ninf Z,$$ where $Z$ is a centered complex Gaussian variable \st $\E(|Z|^2)=v$ and $ \E(Z^2)=\tau$. 
\en{Th}

\subsection{Extension of CLTs for random matrices}The following lemma is borrowed from the paper of Shcherbina and Tirozzi \cite{tirozzi}, except that we do not require, in the hypotheses here, $V$ to be continuous, which is very useful in our case.  
 \beg{lem}\la{lem151113}Let, for each $N$, $(\xi_i^{(N)})_{i=1}^N$ be a collection  of $\R^d$-valued random variables. For each $\vfi:\R^d\to \R$, set $$Z_N(\vfi):=u_N\sum_{i=1}^N (\vfi(\xi_i^{(N)})-\E[\vfi(\xi_i^{(N)})]),$$where  $u_N$ is a sequence of real numbers. We make the following hypotheses : \bgt\ite  For any $\vfi$ in a certain normed subspace $(\Lc, \|\cdot\|)$ of the set of functions $\R^d\to \R$, \be\la{maj_var}\E[Z_N(\vfi)^2]\le \|\vfi\|^2.\ee
 \ite  There is a dense subspace $\Lc_1\subset \Lc$ and a   quadratic form $V : \Lc_1\to \R_+$ \st for any 
 $\vfi\in \Lc_1$, we have the convergence in distribution \be\la{conv322312}Z_N(\vfi)\Ninf \mc{N}(0, V(\vfi)).\ee
 \ent
 Then $V$ is continuous on $\Lc_1$, can be (uniquely) continuously extended to $\Lc$ and \eqre{conv322312} is true for any $\vfi\in \Lc$. 
 \en{lem}

 \bpr This is exactly Proposition 4 of \cite{tirozzi}, except that in \cite{tirozzi}, the hypotheses include the continuity of $V$. Let us prove that 
  the hypotheses made here imply that $V$ is continuous on $\Lc_1$. 
  For any $\vfi\in \Lc_1$, $V(\vfi)$ is the second moment of the limit law of $Z_N(\vfi)$. Hence $$V(\vfi)\le \liminf_{N\to\infty}\E[Z_N(\vfi)^2]\le \|\vfi\|^2.$$ This proves that the quadratic form $V$ is continuous.\epr
 
\subsection{On the Hadamard product of Hermitian matrices}

\begin{propo}\label{Prop:Existence} Let $A_1 \etc A_p$ be $N$ by $N$ Hermitian random matrices whose entries have all their moments. Then, there exists a family of random variables $(a_1 \etc a_p)$ whose joint distribution is given by:
	\begin{equation*}
		\mathbb E[ a_1^{n_1} \dots a_p^{n_p}  ] = \mathbb E\Bigg[  \frac 1 N \mathrm{Tr} [   A_1^{n_1} \circ \dots \circ A_p^{n_p} ] \Bigg], \ \forall n_1 \etc n_p \geq 0,
	\end{equation*}
where $\circ$ denotes the Hadamard (entry-wise) product.
\end{propo}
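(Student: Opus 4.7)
\textbf{Proof plan for Proposition \ref{Prop:Existence}.} The plan is to write down an explicit random vector on an enlarged probability space whose joint moments realize the prescribed formula; the key ingredient is the spectral theorem applied to each $A_j$ separately, which encodes diagonal entries of powers as moments of a probability measure.

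For a deterministic $N \times N$ Hermitian matrix $A$, the spectral projection $E_A$ defines a probability measure on $\mathbb{R}$ at each standard basis vector $e_i$ by $\nu_i^A(B) := \langle e_i, E_A(B) e_i\rangle$, whose moments are $\int x^n\,d\nu_i^A(x) = \langle e_i, A^n e_i\rangle = (A^n)_{ii}$ for every $n \geq 0$. Consequently, for each $i \in \{1,\ldots,N\}$ the product measure $\nu_i^{A_1} \otimes \cdots \otimes \nu_i^{A_p}$ on $\mathbb{R}^p$ is a probability measure with joint moments $\prod_{j=1}^p (A_j^{n_j})_{ii}$.

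On an enlarged probability space I would introduce an index $I$ uniform on $\{1,\ldots,N\}$, independent of $(A_1,\ldots,A_p)$, and, conditionally on $I$ and on $(A_1,\ldots,A_p)$, draw $(a_1,\ldots,a_p)$ as mutually independent random variables with $a_j \sim \nu_I^{A_j}$. Conditioning first on $(A_1,\ldots,A_p,I)$ and then on $(A_1,\ldots,A_p)$ alone would then give
$$\mathbb{E}[a_1^{n_1} \cdots a_p^{n_p}] \;=\; \mathbb{E}\!\left[\frac{1}{N}\sum_{i=1}^N \prod_{j=1}^p (A_j^{n_j})_{ii}\right] \;=\; \mathbb{E}\!\left[\frac{1}{N}\Tr[A_1^{n_1} \circ \cdots \circ A_p^{n_p}]\right],$$
the last equality being the definition of the Hadamard product. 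The finite-moment hypothesis on the entries makes every quantity above integrable, since each $(A_j^{n_j})_{ii}$ is a polynomial in the entries of $A_j$.

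I do not expect a real obstacle here: the content of the proposition is precisely that, for each fixed $i$, the Hadamard product at the $i$-th diagonal entry factors as a product of one-dimensional moment integrals, so the diagonal-averaged trace is a mixture of product measures. The one thing to check is the measurability of $\omega \mapsto \nu_i^{A_j(\omega)}$ so that the conditional distribution of $(a_1,\ldots,a_p)$ given $(A_1,\ldots,A_p,I)$ is well-defined as a probability kernel; this follows because, for any Borel $B \subset \mathbb{R}$, the map $\omega \mapsto \langle e_i, \mathbf{1}_B(A_j(\omega)) e_i\rangle$ is measurable (the spectral decomposition can be carried out measurably on the set of Hermitian matrices). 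Alternatively, one can bypass the enlargement and directly define the law of $(a_1,\ldots,a_p)$ as the (deterministic) probability measure $\mu := \mathbb{E}\bigl[\frac{1}{N}\sum_i \nu_i^{A_1}\otimes \cdots \otimes \nu_i^{A_p}\bigr]$ on $\mathbb{R}^p$, obtained by averaging over both $i$ and the randomness of the matrices.
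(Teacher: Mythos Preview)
Your argument is correct and, in fact, considerably simpler than the paper's. Both proofs rest on the same observation: for a Hermitian matrix $A$ and a standard basis vector $e_i$, the scalar spectral measure $\nu_i^A(\cdot)=\langle e_i,E_A(\cdot)e_i\rangle=\sum_k |u_k(i)|^2\delta_{\lambda_k}$ is a genuine probability measure on $\mathbb{R}$ with $n$-th moment $(A^n)_{ii}$, so the diagonal of a Hadamard product of powers is an average of product-measure moments.

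The paper takes a more circuitous route. It first treats deterministic matrices with \emph{distinct} eigenvalues, writing out the eigenvector components explicitly and exhibiting a density with respect to the product of empirical spectral measures; it then handles random matrices whose joint law has a Lebesgue density (so that eigenvalues are a.s.\ simple and the spectral map is measurable); finally it reduces the general case to the previous one by perturbing with an independent small GUE and passing to the limit in moments. Your construction bypasses all of this because the spectral measure $\nu_i^A$ is well defined and weakly continuous in $A$ regardless of eigenvalue multiplicities, so no genericity or approximation is needed. The paper's approach does yield an explicit density for the law of $(a_1,\ldots,a_p)$ in the absolutely continuous case, but for the bare existence statement of the proposition this extra information is not used anywhere, and your mixture-of-product-measures description is entirely sufficient.
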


\begin{proof}

{\it Step 1.} We first assume that the matrices are deterministic and have distinct eigenvalues. By the spectral decomposition, for $j=1\etc p$, we have $A_j = \sum_{i=1}^N \lambda_{j,i} u_{j,i} u_{j,i}^*$ where $\Lambda_j = (\lambda_{j,i})_{i=1\etc N}$ is the family of eigenvalues of $A_j$ arranged in increasing order, and $U_j=(u_{j,i})_{i=1\etc N}$ is the family of associated eigenvectors. For any $n_1 \etc n_p \geq 0$, one has
	\eq 
		\frac 1 N \Tr  [   A_1^{n_1} \circ \dots \circ A_p^{n_p} ] 
			& = & \frac 1 N \sum_{k=1}^N \big( A_1^{n_1} \big)(k,k) \dots  \big( A_p^{n_p} \big)(k,k)\\
			& = & \frac 1 N \sum_{k=1}^N \bigg(  \sum_{i_1=1}^N \lambda_{1,i_1}^{n_1} u_{1,i_1} u_{1,i_1}^* \bigg)(k,k) \dots  \bigg( \sum_{i_p=1}^N \lambda_{p,i_p}^{n_p} u_{p,i_p} u_{p,i_p}^*\bigg)(k,k)\\
			& = &  \frac 1 {N^p} \sum_{i_1 \etc i_p=1}^N   \lambda_{1,i_1}^{n_1}  \dots  \lambda_{p,i_p}^{n_p}
		 	\times \bigg ( N^{p-1}  \sum_{k=1}^N \big| u_{1,i_1}(k) \big|^2  \dots  \big| u_{p,i_p}(k) \big|^2  	\bigg).
	\qe
For $j=1\etc p$, we set $\textrm d \mu_{\Lambda_j} = \frac 1 N \sum_{i=1}^N \delta_{\lambda_{j,i}}$ the empirical eigenvalues distributions of $A_j$. We denote $F_{\Lambda_j}(t) = \mu_{\Lambda_j}\big( (-\infty, t]\big)$ the cumulative function of $\textrm d \mu_{\Lambda_j}$. Since the eigenvalues of the matrices are distinct, one has $F_{\Lambda_j}(\lambda_{j,i}) =\frac i N$ for any $i=1\etc N$, $j=1\etc p$. Hence, we have
	$$	\frac 1 N \Tr  [   A_1^{n_1} \circ \dots \circ A_p^{n_p} ] 
			 =  \int_{\mathbb R^p} \lambda_1^{n_1} \dots \lambda_p^{n_p}  f_N\big((\lambda_j, \Lambda_j, U_j)_{j=1\etc p}\big) \textrm d \mu_{\Lambda_1}(\lambda_1) \dots \textrm d \mu_{\Lambda_p}(\lambda_p)  ,
	$$

\noindent where $ f_N\big((\lambda_j, \Lambda_j, U_j)_{j=1\etc p}\big) = \bigg ( N^{p-1}  \sum_{k=1}^N \prod_{j=1}^p \big| u_{j,(N F_{\Lambda_j}(\lambda_j))}(k) \big|^2  \bigg).$ Hence, a family of random variables $(a_1 \etc a_p)$ as in the proposition exists and its joint distribution has density $ f_N\big(( \cdot, \Lambda_j, U_j)_{j=1\etc p}\big)  $ with respect to $\mu_{\Lambda_1} \otimes  \dots \otimes \mu_{\Lambda_p}$.
\\
\\\noindent {\it Step 2.} We now assume that $(A_1 \etc A_p)$ are random and that their joint distributions have a density with respect to the Lesbegue measure on $\mathcal H_N^p$, where $\mathcal H_N$ is the space of Hermitian matrices of size $N$. In particular, the matrices have almost surely $N$ distinct eigenvalues, see \cite{DEI}. The spectral decompositions of the previous step are measurable (see \cite[Section 5.3]{DEI}) and, with the above notations $(\Lambda_j, U_j)$ for eigenvalues and eigenvectors of $A_j$, we can write the joint distribution of $(A_1 \etc A_p)$ in the form $g_N\big((\Lambda_j, U_j)_{j=1\etc p} \big) \textrm d \mu_{\Delta_N}(\Lambda_1) \dots  \textrm d \mu_{\Delta_N} (\Lambda_p)\textrm d \mu_{\mathcal U_N}(U_1) \dots \textrm d \mu_{\mathcal U_N}(U_p)$. The symbol $\mu_{\Delta_N}$ denotes the Lebesgue measure on $\Delta_N = \{(x_1\etc x_N) | x_1< \dots < x_N\}$ and $\mu_{\mathcal U_N}$ is the Haar measure on the set $\mathcal U_N$ of unitary matrices of size $N$. For any $n_1 \etc n_p \geq 0$, one has
	\eq 
		 \lefteqn{\esp \bigg[ \frac 1 N\Tr  [   A_1^{n_1} \circ \dots \circ A_p^{n_p} ]  \bigg]  }\\
		 & = & \int_{\Delta_N^p \times \mathcal U_N^p} \frac 1 {N^p} \sum_{i_1 \etc i_p=1}^N \lambda_{1,i}^{n_1} \dots   \lambda_{p,i}^{n_p}  f_N\big((\lambda_{j,i}, \Lambda_j, U_j)_{j=1\etc p} \big) \\
		 && \ \ \ \ \times \ g_N\big((\Lambda_j, U_j)_{j=1\etc p}\big)\textrm d \mu_{\Delta_N}(\Lambda_1) \dots  \textrm d \mu_{\Delta_N} (\Lambda_p)\textrm d \mu_{\mathcal U_N}(U_1)  \dots \textrm d \mu_{\mathcal U_N}(U_p)\\
		 & = & \frac 1 {N^p} \sum_{i_1 \etc i_p=1}^N \int_{\Delta_N^p} \lambda_{1,i}^{n_1} \dots \lambda_{p,j}^{n_p} h_N\big((\lambda_{j,i}, \Lambda_j)_{j=1\etc p} \big) \textrm d \mu_{\Delta_N}(\Lambda_1) \dots \textrm d \mu_{\Delta_N} (\Lambda_p),
	\qe

\noindent where $f_N$ is as in the previous step and
	\eq 
		\lefteqn{ h_N\big((\lambda_{j,i}, \Lambda_j)_{j=1\etc p} \big)} \\
		& = & \int_{\mathcal U_N^p}   f_N\big((\lambda_{j,i}, \Lambda_j, U_j)_{j=1\etc p} \big) g_N\big((\Lambda_j, U_j)_{j=1\etc p}\big)   \textrm d \mu_{\mathcal U_N}(U_1) \etc  \textrm d \mu_{\mathcal U_N}(U_p).
	\qe

\noindent For any $i_1 \etc i_p=1\etc N$, we have 
	\eq
		\lefteqn{ \int_{\Delta_N^p} \lambda_{1,i}^{n_1} \dots \lambda_{p,i}^{n_p} h_N\big((\lambda_{j,i}, \Lambda_j)_{j=1\etc p} \big)  \textrm d \mu_{\Delta_N}(\Lambda_1) \etc   \textrm d \mu_{\Delta_N}(\Lambda_p)}\\
		& = & \int_{\mathbb R^p} \lambda_1^{n_1} \dots \lambda_p^{n_p}	 h_N^{(i_1 \etc i_p)}(\lambda_1 \etc \lambda_p) \mathrm d\lambda_1 \etc  \mathrm d\lambda_p,
	\qe

\noindent where $h_N^{(i_1 \etc i_p)}(\lambda_1 \etc \lambda_p)$ is obtained by integrating with respect to the variables $\lambda_{k_1} \etc \lambda_{k_p}$ for $k_1\neq i_1 \etc k_p \neq i_p$. We finally obtain
	\eq
		 \esp \bigg[ \frac 1 N \Tr  [   A_1^{n_1} \circ \dots \circ A_p^{n_p} ]  \bigg]  & = & \int_{\mathbb R^p} \lambda_1^{n_1} \etc \lambda_p^{n_p}  \bar h_N(\lambda_1 \etc \lambda_p) \mathrm d \lambda_1 \etc  \mathrm d \lambda_p,
	\qe

\noindent where $ \bar h_N = \frac 1 {N^p} \sum_{i_1 \etc i_p} h_N^{(i_1 \etc i_p)}$. Hence, a family of random variables $(a_1 \etc a_p)$ as in the proposition exists and its joint distribution has density $\bar h_N$ with respect to the Lebesgue measure on $\mathbb R^p$.
\\
\\{\it Step 3.} We now consider the general case. Let $(X_1 \etc X_p)$ be a family of independent random matrices, independent of $(A_1 \etc A_p)$, distributed according to the standard Gaussian measure on $\mathcal H_N$ with respect to the inner product $\langle A,B\rangle = N \Tr [AB]$. By the regularizing process of convolution on Hermitian space, for any $\eps>0$, the joint distribution of $(A_1^\eps \etc A_p^\eps) = (A_1+\eps X_1 \etc A_p+ \eps X_p)$ has a density with respect to the Lebesgue measure. By the previous step, there exists a family of random variables $(a_1^\eps \etc a_p^\eps)$ such that $\esp\big[ (a_1^\eps)^{n_1} \dots  (a_p^\eps)^{n_p} \big] = \esp\big[  \frac 1 N \Tr \big[ (A_1^\eps)^{n_1} \circ \dots \circ (A_p^\eps)^{n_p} \big] \big]$ for any $n_1 \etc n_p \geq 0$. As $\eps$ goes to zero, $(a_1^\eps \etc a_p^\eps)$  converges in moments to a family of random variables as in the proposition.
\end{proof}

 \begin{thebibliography}{10}
 \bibitem{alice-greg-ofer}
G.~Anderson, A.~Guionnet, O.~Zeitouni
\newblock \emph{An Introduction to Random Matrices}.
\newblock  Cambridge studies in advanced mathematics, {118} (2009).
\bibitem{greg-ofer} 
G. Anderson, O. Zeitouni \emph{A {CLT} for a band matrix model},  Probab. Theory Rel. Fields,
     2005, {\bf 134}, 283--338

 \bibitem{ABP09} A. Auffinger, G. Ben Arous, S. P\'ech\'e \emph{Poisson convergence for the largest eigenvalues of heavy tailed random matrices}. Ann. Inst. Henri Poincar\'e Probab. Stat. 45 (2009), no. 3, 589--610.

\bibitem{bai-silver-book} Z.D.~Bai, J.W.~Silverstein \emph{Spectral analysis of large dimensional random matrices}. Second Edition, Springer, New York, 2009.

\bibitem{baiysilver} Z.D.~Bai, J.~Silverstein \emph{CLT for linear spectral statistics of large-dimensional sample covariance matrices.} Ann. Probab. {\bf 32}, 2004, 533--605.

 \bibitem{BAI2009EJP} Z.D.~Bai,  X. Wang, W. Zhou  \emph{CLT for linear spectral statistics of Wigner matrices.} Electron. J. Probab. {\bf 14} (2009), no. 83, 2391--2417.
 \bibitem{BaiYaoBernoulli2005} Z.D.~Bai, J. Yao  \emph{On the convergence of the spectral empirical process of Wigner matrices.} Bernoulli {\bf 11} (2005) 1059--1092.
 
  \bibitem{BDG} S. Belinschi, A. Dembo,  A. Guionnet   \emph{Spectral measure of heavy tailed band and covariance randommatrices}, {Comm. Math. Phys.}, {\bf 289},  {2009},  1023--1055.
  
     \bibitem{Gerard_Kim} G.~Ben Arous, K.~Dang 
\emph{On fluctuations of eigenvalues of random permutation matrices},
 {arXiv:1106.2108, preprint}.

\bibitem{BAGheavytails}  G. Ben Arous, A. Guionnet \emph{The spectrum of heavy tailed random matrices}. Comm. Math. Phys. {\bf 278} (2008), no. 3, 715--751.

\bibitem{HTBM12} F. Benaych-Georges, S. P\'ech\'e \emph{Localization and delocalization for heavy tailed band matrices}. To appear in Ann. Inst. Henri Poincar\'e Probab. Stat.

\bibitem{Billingsley} P. Billingsley \emph{Probability and measure}, Wiley, third edition.

  \bibitem{Bingham-Goldie-Teugels} N.H. Bingham, C.M. Goldie  J.L. Teugels, \emph{Regular variation}, Cambridge University Press, 1989.

\bibitem{charles_alice} C. Bordenave, A. Guionnet \emph{Localization and delocalization of eigenvectors for heavy-tailed random matrices}, arxiv. 

\bibitem{BCC} C.~ Bordenave, P.~ Caputo,    D.~ Chafa{\"{\i}}
              \emph {Spectrum of large random reversible {M}arkov chains:
              heavy-tailed weights on the complete graph},
  {Ann. Probab.},
{\bf 39},
     {2011}, 1544--1590.
     
     \bibitem{BCC2} C.~ Bordenave, P.~ Caputo,    D.~ Chafa{\"{\i}}
    \emph{Spectrum of non-{H}ermitian heavy tailed random matrices},
  {Comm. Math. Phys.},
 {\bf 307}, {2011}, 513--560.
 
    \bibitem{brezis} H. Brezis \emph{Functional analysis, Sobolev spaces and partial differential equations}, Universitext, Springer (2011).
 
\bibitem{CB} P. Cizeau, J.-P. Bouchaud, \emph{Theory of L\'evy matrices} Phys. Rev. E 50 (1994).

\bibitem{cabanal} T.~Cabanal-Duvillard \emph{Fluctuations de la loi empirique de grande matrices alÈatoires}  Ann. Inst. H.
Poincar\'eÈ Probab. Stat. {\bf 37}, 2001,  373--402.

\bibitem{chatterjee} S.~Chatterjee
     \emph{Fluctuations of eigenvalues and second order {P}oincar\'e
              inequalities},
  {Probab. Theory Related Fields},
 {\bf 143},
     {2009},{1--40}.
     
\bibitem{DEI}
{P. Deift}
\emph{Orthogonal polynomials and random matrices: a
{R}iemann-{H}ilbert approach},
{Courant Lecture Notes in Mathematics},
 {3},
  {New York},
 {1999}.
 
\bibitem{DiEv}
P.~ Diaconis, S. Evans
     \emph{Linear functionals of eigenvalues of random matrices},
 {Trans. Amer. Math. Soc.},
{\bf 353}, {2001}, 2615--2633.

\bibitem{DiSha} P. Diaconis, M. Shahshahani \emph{On the eigenvalues of random matrices},
Studies in applied probability, J. Appl. Probab. 31A (1994), 49--62

    \bibitem{DumiJPP} I.~Dumitriu, T.~ Johnson, S.~Pal, E.~Paquette \emph{Functional limit theorems for random regular graphs}, 
  {Probab. Theory Related Fields},
     {2012},{1--55}.

\bibitem{ESY2} L. Erd\"os, B. Schlein, H.T. Yau \emph{Semicircle law on short scales and delocalization of eigenvectors for Wigner random matrices}, Ann. Prob. 37 (2009).

\bibitem{EYY}{ L.~Erd\"os, H.T.~ Yau,   J.~Yin}
     \emph{Rigidity of eigenvalues of generalized {W}igner matrices},  {Adv. Math.},
{\bf 229},
  {2012}, {1435--1515}.

   \bibitem{feller2} W. Feller \emph{An introduction to probability theory and its applications}, volume II, second edition, New York London Sydney : J. Wiley, 1966.
  
\bibitem{GUI}
 {A. Guionnet},
\emph{Large random matrices: lectures on macroscopic asymptotics},
{Lecture Notes in Mathematics},
 {1957},
 {Lectures from the 36th Probability Summer School held in
             Saint-Flour, 2006},
 {Springer-Verlag},
{Berlin},
 {2009}.
 \bibitem{guionnet} A.~Guionnet \emph{
 Large deviations upper bounds and central limit theorems for non-commutative functionals of Gaussian large random matrices}. Ann. Inst. H. Poincar\'e Probab. Stat. {\bf 38}, 2002,  341--384.
 
 \bibitem{maurel} A.~Guionnet,   E.~Maurel-Segala
   \emph{Second order asymptotics for matrix models}, {Ann. Probab.}, {\bf 35},
  {2007}, 2160--2212.
  
 \bibitem{johansson88} K.~Johansson \emph{On Szeg\"{o} asymptotic formula for Toeplitz determinants
and generalizations},
Bull. des Sciences Math\'ematiques, vol. 112 (1988), 257-304.
  
 \bibitem{johansson} K.~Johansson \emph{On the fluctuations of eigenvalues of random Hermitian matrices.} Duke Math. J. {\bf 91} 1998, 151--204.
 
    \bibitem{jonsson}  D.~Jonsson \emph{Some limit theorems for the eigenvalues of a sample covariance matrix.} J. Mult. Anal. {\bf 12}, 1982, 1--38.


\bibitem{KKP96}  A. M. Khorunzhy, B. A. Khoruzhenko, L. A. Pastur \emph{Asymptotic properties of large random
matrices with independent entries}, {J. Math. Phys.} 37 (1996) 5033--5060.  

\bibitem{KSV04} O. Khorunzhy, M. Shcherbina, V. Vengerovsky
\emph{Eigenvalue distribution of large weighted random graphs},
J. Math. Phys. 45 (2004), no. 4, 1648--1672. 
    
\bibitem{lytova}
A.~Lytova,    L.~ Pastur
     \emph{Central limit theorem for linear eigenvalue statistics of
              random matrices with independent entries},
    {Ann. Probab.}, {\bf 37}, {2009},
   1778--1840.
\bibitem{MAL12}
{C. Male}
\emph{The distribution of traffics and their free product},
 {arXiv:1111.4662v3 preprint}.
 
\bibitem{MAL122}
{C. Male}
\emph{The limiting distributions of large heavy Wigner and arbitrary random matrices},
 {arXiv:1111.4662v3 preprint}.
 
\bibitem{mingo} J.A ~Mingo, R.~Speicher \emph{Second  order freeness and fluctuations of random matrices. I. Gaussian and Wishart matrices and cyclic Fock spaces}. J. Funct. Anal. {\bf 235}, 2006,  226--270.

\bibitem{MF91} A. Mirlin,  Y. Fyodorov \emph{Universality of level correlation function of sparse random matrices}. J. Phys. A 24 (1991), no. 10, 2273--2286.


\bibitem{samotaqqu} G. Samorodnitsky, M. Taqqu  \emph{Stable non-Gaussian random processes. Stochastic models with infinite variance. Stochastic Modeling.} Chapman \& Hall, New York, 1994.

 \bibitem{MShcherbina11} M.~Shcherbina
\emph{Central Limit Theorem for Linear Eigenvalue Statistics of the Wigner and
Sample Covariance Random Matrices}, Journal of Mathematical Physics, Analysis,
Geometry, 7(2), (2011), 176--192.

 \bibitem{tirozzi} M.~Shcherbina, B.~Tirozzi,  \emph{Central limit theorem for fluctuations of linear eigenvalue
              statistics of large random graphs}, {J. Math. Phys.},
  {\bf 51},  {2010}, {023523, 20}

  \bibitem{sinai} Y.~Sinai,   A.~Soshnikov \emph{
     Central limit theorem for traces of large random symmetric
              matrices with independent matrix elements},
 {Bol. Soc. Brasil. Mat. (N.S.)}, {\bf 29}, {1998},
  1--24.
  
  \bibitem{soshni00} A. Soshnikov \emph{The central limit theorem for local linear statistics in
classical compact groups and related combinatorial identities}, Ann. Probab., 28 (2000), 1353--1370.
  
  \bibitem{Tao-Vu_0906.0510} T. Tao, V. Vu \emph{Random matrices: universality of local eigenvalue statistics},  Acta Mathematica, 	
 206 (2011), 127--204.
 
\bibitem{V04} V. Vengerovsky \emph{Asymptotics of the correlator of an ensemble of sparse random matrices}. (Russian) Mat. Fiz. Anal. Geom. 11 (2004), no. 2, 135--160. 

\bibitem{wigner}  E.P. Wigner \emph{On the distribution of the roots of certain symmetric
              matrices},  {Annals Math. },
    {\bf 67},
     {1958},
    {325--327}.
    
    \bibitem{ZAK}
  {I. Zakharevich}
\emph{A generalization of {W}igner's law},
{Comm. Math. Phys.},
 {268},
{2006},
{2},
 {403--414}.

 \en{thebibliography}

\end{document}